\newtheorem{thm}{Theorem}[section]
\newtheorem{cor}[thm]{Corollary}
\newtheorem{lem}[thm]{Lemma}
\newtheorem{prop}[thm]{Proposition}
\theoremstyle{definition}
\theoremstyle{remark}
\newtheorem{rem}[thm]{Remark}
\numberwithin{equation}{section}
\numberwithin{thm}{section}
\newtheorem*{rem*}{Remark}
\newcommand{\eps}{\varepsilon}
\newcommand{\lsm}{\lesssim}
\newcommand{\Z}{{\mathbb{Z}}}
\newcommand{\R}{{\mathbb{R}}}
\newcommand{\bs}[1]{\dot B_{\infty,\infty}^{#1}}
\newcommand{\ed}{\end {document}}
\newcounter{smalllist}
\title[Regularity upgrade of pressure]{A regularity upgrade of pressure}
\author[D. Li]{Dong Li}
\address{Department of Mathematics, The Hong Kong University of Science and Technology,
Clear Water Bay, Kowloon, Hong Kong}
\email{madli@ust.hk}
\author[X. Zhang]{Xiaoyi Zhang}
\address{Department of Mathematics, University of Iowa, 14 Maclean Hall, Iowa City, IA 52242}%
\email{xiaoyi-zhang@uiowa.edu}
\begin{document}

\begin{abstract}
For the incompressible Euler equations the pressure formally scales as a quadratic function of velocity. We provide several optimal regularity estimates on the pressure by using regularity of velocity
in various Sobolev, Besov and Hardy spaces. Our proof exploits the incompressibility condition in an essential way and is deeply connected with the classic Div-Curl lemma which we also generalise as a fractional Leibniz rule in Hardy spaces. To showcase the sharpness of results, we  construct a class of counterexamples at several end-points.
\end{abstract}
\maketitle
\section{Introduction}
The $n$-dimensional ($n\ge 2$) incompressible Euler equation takes the form
\begin{align} \label{g1}
\begin{cases}
\partial_t u + (u\cdot \nabla ) u = -\nabla p, \quad (t,x) \in (0,\infty) \times \mathbb R^n,\\
\nabla \cdot u=0, \\
u \Bigr|_{t=0}=u_0,
\end{cases}
\end{align}
where $u:\, \mathbb [0,\infty)\times \mathbb R^n \to \mathbb R^n$, $p:\, [0,\infty)\times \mathbb R^n \to \mathbb R$ represent velocity and pressure of the underlying fluid respectively. In this work we shall not consider the Cauchy problem or wellposedness issues at all. Instead we regard $u$ as a \emph{given} solution to \eqref{g1} in appropriate function spaces. Our main objective is to study
the regularity properties of the pressure $p$ in terms of the known velocity field $u$.
By taking the divergence on both sides of the first equation in \eqref{g1}, we get
\begin{align} \label{3}
-\Delta p = \nabla \cdot \bigl( ( u\cdot \nabla) u \bigr).
\end{align}
Equation \eqref{3} will be our main object of study.  To simplify the discussion we shall
completely ignore the explicit time dependence and only focus on spatial regularity. Regarding 
\eqref{3} as a Poisson problem for the pressure $p$, it is well known that $p$ is determined up to
a harmonic part. This degree of freedom can be eliminated by
supplying some decay conditions at spatial infinity. Alternatively to simplify matters, in the following
discussion, we shall take the convention that
$p$ is identified with the expression 
\begin{align*}
(-\Delta)^{-1} \nabla \nabla \bigl(  u\otimes u \bigr) = \sum_{l,k} (-\Delta)^{-1} \partial_l \partial_k (u_l u_k)
\end{align*}
which is well-defined
by putting suitable assumptions on the velocity vector $u$ (e.g. $u$ is Schwartz).  Our main focus is the quantitative estimates
of $p$ in terms of $u$ in various functional spaces. 

Note that since $u$ is divergence-free, one can rewrite \eqref{3} as
\begin{align} \label{4}
-\Delta p = \sum_{k,l} \partial_k \partial_l ( u_k u_l).
\end{align}
Alternatively, we can rewrite \eqref{4} as
\begin{align} \label{4a}
-\Delta p= \sum_{k,l} (\partial_k u_l ) (\partial_l u_k).
\end{align}

There are some subtle differences between the expressions
\eqref{3}--\eqref{4a}. Assume that $u \in W^{1,q}(\R^n)$ for some $n<q<\infty$. If we only make use of \eqref{3}, then
\begin{align*}
\nabla p= (-\Delta)^{-1} \nabla \nabla \cdot (u\cdot \nabla u),
\end{align*}
which gives $\nabla p \in L^q(\R^n)$.  By using \eqref{4} it is easy to check that 
$p\in L^q(\mathbb R^n)$. Thus we obtain $p \in W^{1,q}(\mathbb R^n)$.
On the other hand, if we use \eqref{4a}, then clearly $p \in W^{2, \frac q2} (\mathbb R^n)$. This
is apparently a better estimate in view of the embedding $W^{2,\frac q2} (\mathbb R^n) \hookrightarrow W^{1,q}(\mathbb R^n)$. As it turns out,
this ``upgrade of regularity" phenomenon is quite generic. For example it can be generalised to Sobolev spaces $W^{s,q}$ with $0<s\le 1$,
$2<q<\infty$ and even Besov spaces.
We have the following theorem.

\begin{thm}[Sobolev and Besov] \label{t0}
Suppose $u\in \mathcal S(\mathbb R^n)$. Then 
for $0\le s \le 1$ and $1<q<\infty$, we have
\begin{align}
\| p \|_{W^{2s,q} (\mathbb R^n)} \lesssim  \| u \|_{W^{s,2q}(\mathbb R^n)}^2.\label{sob}
\end{align}
If $1\le q, r\le \infty$ and 
$0<s<1$, then
\begin{align}
\|p\|_{\dot B^{2s}_{q,r}(\mathbb R^n)}\lsm \|u\|_{\dot B^s_{2q,2r} (\mathbb R^n)}^2. \label{besov}
\end{align}
In particular for H\"older spaces we have for $0<s<1$,
\begin{align}
\| p \|_{\dot B^{2s}_{\infty,\infty} } \lesssim  \| u
\|_{\dot B^s_{\infty,\infty}}^2.\label{bes}
\end{align}
On the other hand, 
for $s=1$ the corresponding Besov estimate does not hold and can be replaced by
\begin{align} \label{bes_stronger}
\| p\|_{\dot B^2_{\infty,\infty} } \lesssim  \| \nabla u  \|_{\infty}^2.
\end{align}
\end{thm}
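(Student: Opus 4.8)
The plan is to prove the four estimates in a unified way by exploiting the structure of the Fourier multiplier $(-\Delta)^{-1}\partial_k\partial_l$ combined with the divergence-free condition, the key point being that $\sum_{k,l}(-\Delta)^{-1}\partial_k\partial_l(u_ku_l)$ can be rewritten — using $\sum_k\partial_ku_k=0$ — in a way that distributes derivatives evenly between the two copies of $u$. Concretely, for the Sobolev estimate \eqref{sob} I would use the paraproduct/Littlewood--Paley decomposition of the product $u_ku_l$ into high-low, low-high, and high-high pieces. In the high-low piece, $\widehat{u_ku_l}$ is supported away from the origin and the operator $(-\Delta)^{-1}\partial_k\partial_l$ acts as a bounded Fourier multiplier on each dyadic block, so $\|P_N p\|_q\lesssim$ (derivative count $N^{-2}$ at worst absorbed by the $W^{2s,q}$ norm) times the product estimate; here one uses $\|u\|_{L^\infty}\lesssim\|u\|_{W^{s,2q}}$-type bounds carefully, or rather Bernstein plus Hölder with exponents $2q,2q$ to land in $L^q$. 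The genuinely delicate region is the high-high-to-low interaction $\sum_{N}P_{\le N}\big((-\Delta)^{-1}\partial_k\partial_l\big)(P_N u_k\, \widetilde P_N u_l)$, where the multiplier is \emph{not} bounded on the output frequency shell and naive estimates lose $2s$ derivatives; this is exactly the place where the incompressibility must be invoked, in the spirit of the Div--Curl lemma (and the Hardy-space Leibniz rule the abstract promises): the quantity $\sum_{k,l}(\partial_k u_l)(\partial_l u_k)=\operatorname{div}\operatorname{div}(u\otimes u)$ has a null structure, so summing over $k,l$ produces cancellation that recovers the two missing derivatives. I expect this high-high piece to be the main obstacle, and I would handle it by writing $\sum_{k,l}\partial_k\partial_l(u_ku_l)=\sum_{k,l}\partial_k(u_k\partial_l u_l)+\sum_{k,l}\partial_k((\partial_l u_k)u_l) = \sum_{k,l}\partial_k\big((\partial_l u_k)u_l\big)$ using $\partial_l u_l=0$, which already exhibits one derivative hitting each factor and is the dyadic-level manifestation of the cancellation.

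For the Besov estimate \eqref{besov} the argument is essentially the same Littlewood--Paley decomposition, now summing the $\ell^r$ norm of $2^{2sj}\|P_j p\|_{q}$; the high-low and low-high pieces are controlled by discrete Young/Hölder inequalities in the $j$-index against $\|u\|_{\dot B^s_{2q,2r}}$ (note $2r$ pairs correctly via $\tfrac1{2r}+\tfrac1{2r}$ when one takes $\ell^r$ of a product of two $\ell^{2r}$-sequences), and the high-high piece again uses the rewritten form with one derivative on each $u$. The endpoint Hölder case \eqref{bes} is the specialisation $q=r=\infty$ and needs no separate work once \eqref{besov} is established, since $\dot B^s_{\infty,\infty}$ is the Hölder--Zygmund scale; here the high-high sum $\sum_{j}$ of bounded-in-$j$ terms is harmless because we only take a supremum, not an $\ell^1$ sum.

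Finally, for the failure at $s=1$ and its replacement \eqref{bes_stronger}: the obstruction is precisely the high-high-to-low sum $\sum_{j}P_{\le j}\partial_k\partial_l(P_j u_k P_j u_l)$, whose $\dot B^2_{\infty,\infty}$ norm would require summing $\Big\|\sum_j 2^{2\cdot 0}\cdots\Big\|$ — at $s=1$ this is a borderline logarithmically divergent sum of $\|P_j\nabla u\|_\infty^2$, which is why $\dot B^1_{\infty,\infty}$ on the right cannot control it (one can even produce a lacunary counterexample, as the abstract advertises). Replacing $\|u\|_{\dot B^1_{\infty,\infty}}$ by $\|\nabla u\|_\infty=\|\nabla u\|_{L^\infty}$ fixes this: now the frequency-localised pieces $\|P_j\nabla u\|_\infty$ are bounded by $\|\nabla u\|_\infty$ uniformly, but the sum $\sum_{j\le N}$ that previously caused trouble instead gets summed against the output localisation $P_N$, and using the rewritten form $\operatorname{div}((\nabla u_k)u_l)$ one derivative stays outside so that estimating $P_N(-\Delta)^{-1}\operatorname{div}\operatorname{div}$ on the high-high input gives $\|P_N p\|_\infty\lesssim \sum_{j\ge N} 2^{-2(j-N)}$-type geometric decay (or $\lesssim N\cdot 2^{-2N}\cdot$ at worst, still summable after multiplying by $2^{2N}$) times $\|\nabla u\|_\infty^2$. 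I would organise the proof by proving \eqref{sob} and \eqref{besov} first via the paraproduct splitting, deduce \eqref{bes} as a special case, and then treat \eqref{bes_stronger} by the same splitting with the uniform bound $\|P_j\nabla u\|_\infty\le\|\nabla u\|_\infty$ inserted at the step where the Besov norm on the right would otherwise be needed; the main technical care throughout is the bookkeeping of which of the two derivatives in $\partial_k\partial_l$ is absorbed by the inverse Laplacian on the output and which is moved onto a factor of $u$ via the divergence-free identity.
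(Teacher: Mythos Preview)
Your overall plan (paraproduct splitting plus the div-free rewriting) matches the paper, but you have the roles of the pieces reversed, and this is a genuine gap. The diagonal (high-high) piece $(C)=\sum_j R_{lk}(P_j u_l\,\tilde P_j u_k)$ is the \emph{easy} one: the multiplier $R_{lk}=(-\Delta)^{-1}\partial_l\partial_k$ has bounded symbol $\xi_l\xi_k/|\xi|^2$ on every shell, so
\[
\||\nabla|^{2s}(C)\|_q\lesssim\Bigl\|\bigl(2^{2ms}\textstyle\sum_{j\ge m-10}|P_j u||\tilde P_j u|\bigr)_{l_m^2}\Bigr\|_q\lesssim\|u\|_{\dot W^{s,2q}}^2
\]
follows from $s>0$ (geometric sum in $m-j\le 10$) and Cauchy--Schwarz, with no incompressibility needed. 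Your assertion that the multiplier is ``not bounded on the output frequency shell'' is incorrect, and the rewriting $\partial_k\partial_l(u_ku_l)=\partial_k((\partial_l u_k)u_l)$ applied to $(C)$ would trade the order-zero outer operator for one of order $-1$ at the cost of a factor $2^j$ from $\partial_l P_j u$, giving an extra $2^{j-N}\gg 1$ at output frequency $N\ll 2^j$---so it hurts rather than helps there.

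The piece that genuinely requires incompressibility is the low-high term $(A)=\sum_j R_{lk}(P_{\le j-2}u_l\,P_j u_k)$ (and symmetrically $(B)$), which you dismissed as routine. Naively, $|\nabla|^{2s}$ at output frequency $\sim 2^j$ costs $2^{2sj}$, while $\|P_{\le j-2}u\|_{2q}$ carries no decay in $j$, so one loses $s$ derivatives; your ``Bernstein plus H\"older with exponents $2q,2q$'' alone cannot close this. The paper applies exactly your identity here---$\sum_k\partial_k(P_{\le j-2}u_l\,P_ju_k)=\sum_k(\partial_k P_{\le j-2}u_l)P_ju_k$---to shift one derivative onto the low-frequency factor, reducing the outer operator to order $2s-1$ and then invoking $\|(2^{j(s-1)}\nabla P_{\le j}u)_{l_j^{2}}\|_{2q}\lesssim\|u\|_{\dot W^{s,2q}}$, which is precisely where the restriction $s<1$ enters. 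As for \eqref{bes_stronger}, no paraproduct analysis is needed: from $-\Delta p=\sum_{k,l}(\partial_ku_l)(\partial_lu_k)$ one gets $\|P_N\Delta p\|_\infty\le\|\nabla u\|_\infty^2$ directly.
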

\begin{rem*}
\eqref{bes_stronger} is a simple consequence of  \eqref{4a} and thus we omit the proof.
\end{rem*}
\begin{rem*}
For Schwartz functions $f$: $\mathbb R^n \to \mathbb R^n$, $g$: $\mathbb R^n \to \mathbb R^n$ with the property
$\nabla \cdot f= \nabla \cdot g =0$,  one can consider
the bilinear operator
\begin{align*}
B(f,g)= \sum_{l,k=1}^n \partial_l \partial_k \Delta^{-1} ( f_l g_k).
\end{align*}
Same proof as in Theorem \ref{t0} yields that for $0\le s\le 1$ and $1<q<\infty$,
\begin{align*}
\| B(f,g) \|_{W^{2s,q}} \lesssim \| f \|_{W^{s,2q}} \| g \|_{W^{s,2q}};
\end{align*}
and for $0<s<1$, $1\le q,r\le \infty$,
\begin{align*}
\| B(f,g) \|_{\dot B^{2s}_{q,r}} \lesssim \| f \|_{\dot B^{s}_{2q,2r}} \| g  \|_{\dot B^s_{2q,2r}}.
\end{align*}

\end{rem*}
\begin{rem*}
The estimate \eqref{bes} shows that for $0<s<\frac 12$,
\begin{align*}
\| p \|_{\dot C^{2s}} \lesssim \| u \|_{\dot C^s}^2;
\end{align*}
and for $\frac 12 <s<1$,
\begin{align*}
\| \nabla p \|_{\dot C^{2s-1}} \lesssim \| u \|_{\dot C^s}^2.
\end{align*}
One should recall that $\|f \|_{\dot C^{s}} \sim \|f \|_{\dot B^s_{\infty,\infty}}$ for $0<s<1$. Thus
the Besov formulation connects these two estimates in a most natural way.
\end{rem*}

Note from \eqref{4} it is evident that $p=\mathcal R( O(u^2))$ where
$\mathcal R$ is a Riesz-type operator. The usual product rule in
H\"older spaces says that if $f,\,g \in C^{s}$, then $fg \in C^{s}$.
Thus by \eqref{4} one should only expect $p\in C^{s}$ if $u \in
C^{s}$. However here by using Theorem \ref{t0}, one can prove that $p \in \dot
B^{2s}_{\infty,\infty}$ as long as $0<s<1$. Roughly speaking, we are
asserting that the $2s$-derivative on $p$ can fall ``evenly'' into
each composing velocity:
\begin{align*}
|\nabla|^{2s} p \sim \mathcal R( |\nabla|^{s} u |\nabla|^{s} u),
\end{align*}
and there do not appear terms such as $O(|\nabla|^{s-} u |\nabla|^{s+} u)$.

There exists an analogue of Theorem \ref{t0} in Hardy space. The following theorem can be regarded
as the case $q=2$ in Theorem \ref{t0}.

\begin{thm}[Hardy space]\label{t2}
Let $0<s\le 1$. Then for $u\in \mathcal S(\mathbb R^n)$, 
\begin{align}\label{40}
\||\nabla|^{2s}p\|_{\mathcal H^1(\mathbb R^n)}\lsm \|u\|_{W^{s,2}(\mathbb R^n)}^2.
\end{align}
In the case $s=1$, the operator can be replaced by a general second order derivative $\partial^2=\partial_i \partial_j$ for any $i,j \in \{1,\cdots, n\}$.
\end{thm}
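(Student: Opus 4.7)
My plan is to reduce everything to a Div-Curl type cancellation and exploit it via the Coifman--Lions--Meyer--Semmes ($\mathcal H^1$) theorem, in its classical form when $s=1$ and in a fractional form when $0<s<1$. The crucial observation is that, thanks to $\nabla\cdot u=0$, the identity \eqref{4a} can be rewritten as
\begin{equation*}
-\Delta p \;=\; \sum_{k,l}(\partial_k u_l)(\partial_l u_k)\;=\;\sum_{l=1}^n \nabla u_l\cdot \partial_l u,
\end{equation*}
where for each $l$ the factor $\nabla u_l$ is a gradient while $\partial_l u$ is divergence free (since $\partial_l$ commutes with $\nabla\cdot$). Thus $-\Delta p$ is a genuine Div-Curl quantity, and all the subsequent work is about transferring derivatives off of it while preserving the $\mathcal H^1$ endpoint.

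For the endpoint $s=1$, the classical CLMS theorem applies termwise to give
\begin{equation*}
\|{-\Delta p}\|_{\mathcal H^1(\R^n)} \;\lesssim\; \sum_l \|\nabla u_l\|_{L^2}\|\partial_l u\|_{L^2}\;\lesssim\; \|\nabla u\|_{L^2}^2.
\end{equation*}
Since $\partial_i\partial_j(-\Delta)^{-1}$ is a Calder\'on--Zygmund operator and hence bounded on $\mathcal H^1$, this immediately yields $\|\partial_i\partial_j p\|_{\mathcal H^1}\lesssim \|u\|_{W^{1,2}}^2$, which is the $s=1$ case of \eqref{40}.

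For $0<s<1$ I would rewrite
\begin{equation*}
|\nabla|^{2s}p\;=\;|\nabla|^{-2(1-s)}\Bigl(\sum_l \nabla u_l\cdot \partial_l u\Bigr),
\end{equation*}
so that \eqref{40} is equivalent to a \emph{fractional} Div-Curl estimate of the schematic form
\begin{equation*}
\bigl\||\nabla|^{-2\sigma}(f\cdot g)\bigr\|_{\mathcal H^1}\;\lesssim\;\||\nabla|^{-\sigma}f\|_{L^2}\,\||\nabla|^{-\sigma}g\|_{L^2},\qquad \sigma=1-s\in(0,1),
\end{equation*}
valid whenever $\nabla\cdot f=0$ and $g$ is a gradient. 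I would prove such an estimate by a Littlewood--Paley/paraproduct decomposition $f=\sum_j\pd{j} f$, $g=\sum_k\pd{k} g$. In the two unbalanced regimes ($j\ll k$ or $j\gg k$) the smoothing $|\nabla|^{-2\sigma}$ redistributes $\sigma$ derivatives onto each factor, and a standard vector-valued Fefferman--Stein square-function bound (or Kato--Ponce style estimate) closes the argument with no cancellation needed. The delicate case is the high-high diagonal $j\sim k$, where $L^1$ estimation loses the $\mathcal H^1$ endpoint; there I would invoke the Div-Curl structure scale by scale, writing each dyadic piece of $f\cdot g$ as a divergence of a controlled vector field and testing against $\mathrm{BMO}$ by duality. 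This is precisely the fractional Leibniz rule in Hardy spaces that the paper advertises as its Div-Curl generalization.

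The main obstacle is exactly this high-high piece: merely distributing the derivatives evenly onto the two factors via a vanilla fractional Leibniz rule lands one in $L^1$, not $\mathcal H^1$, and recovering the endpoint requires a genuine use of the divergence/curl cancellation at every dyadic scale. Once the fractional Div-Curl lemma is in hand, the rest of the argument for $0<s<1$ is a routine paraproduct bookkeeping, and the case $s=1$ can either be recovered as a limit or handled separately by the classical CLMS argument sketched above.
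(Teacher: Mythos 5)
For the endpoint $s=1$ your approach is correct and genuinely different from the paper's: rewriting $-\Delta p=\sum_l \nabla u_l\cdot\partial_l u$ and observing that each term is a curl-free/divergence-free pair, you can apply the classical Coifman--Lions--Meyer--Semmes theorem termwise and then use $\mathcal H^1$-boundedness of the Riesz-type operator $\partial_i\partial_j(-\Delta)^{-1}$. This is cleaner than the paper's route, which instead performs a paraproduct decomposition of $p=R_{lk}(u_lu_k)$, exploits $\nabla\cdot u=0$ only in the two unbalanced pieces, and handles the diagonal piece by the Besov embedding $\dot B^0_{1,1}\hookrightarrow\mathcal H^1$.

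For $0<s<1$, however, your proposal has a genuine gap, and moreover is aiming much harder than necessary. You write $|\nabla|^{2s}p=|\nabla|^{-2\sigma}(\sum_l\nabla u_l\cdot\partial_l u)$ with $\sigma=1-s$ and assert that a ``fractional Div-Curl lemma'' with $|\nabla|^{-\sigma}$ split onto each factor will close the high-high block. Writing each diagonal dyadic piece as a single divergence, $\sum_j\tilde f_j\cdot g_j=\sum_j\nabla\cdot(\tilde f_j\psi_j)$ with $g=\nabla\psi$, gains exactly one power of the \emph{output} frequency $2^k$. Summing $2^{-2k\sigma}\cdot 2^k$ over $k\le j+O(1)$ converges only when $1-2\sigma>0$, i.e. $s>\tfrac12$. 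For $0<s\le\tfrac12$ the single Div-Curl cancellation is not enough, and the proposed fractional lemma (in the split form you stated) fails. This is consistent with the paper's Theorem~\ref{dc}, which is limited to $s>-1$ precisely because it extracts one derivative of cancellation; applied to $|\nabla|^{-2\sigma}$ it would require $-2\sigma>-1$. To recover the full range you would need to revert to the double-divergence structure $p=\Delta^{-1}\partial_k\partial_l(u_lu_k)$, at which point you are back to the paper's argument.

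The paper's actual proof for $0<s<1$ is a one-liner that you bypassed: the Besov estimate \eqref{besov} of Theorem~\ref{t0} with $q=r=1$ gives $\|p\|_{\dot B^{2s}_{1,1}}\lesssim\|u\|_{\dot B^s_{2,2}}^2\sim\|u\|_{\dot W^{s,2}}^2$, and $\dot B^0_{1,1}\hookrightarrow\mathcal H^1$ then yields \eqref{40}. Crucially, in that argument the diagonal piece $(C)$ requires \emph{no} Div-Curl cancellation at all: since $|\nabla|^{2s}R_{lk}$ is a \emph{positive}-order operator, the low output frequencies coming from high-high interactions are damped rather than amplified, and the bound closes for every $s>0$. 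The divergence-free structure is needed only in the unbalanced pieces $(A)$, $(B)$, and only to move one derivative off the high-frequency factor. So your intuition that ``the delicate case is the high-high diagonal'' is exactly inverted in the correct formulation: it is the easy piece for $0<s<1$, and the subtlety lives entirely at the endpoint $s=1$ (which you handled well).
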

\begin{rem*}
For $0<s<1$ one has the stronger estimate $ p \in \dot B^{2s}_{1,1}$ thanks to theorem \ref{t0}.
\end{rem*}

 In the proof of Theorem \ref{t2} we need to exploit the incompressibility of velocity which provides cancelation
 of  some
 high frequency interaction terms in the nonlinearity.  As it turns out,  in the 3D case,  this is deeply linked to
 the ``Div-Curl" lemma in Coifman-Lions-Meyer-Semmes \cite{clms}. In its simplest formulation,
 the Div-Curl lemma asserts that if $\nabla \cdot f= \nabla \times g=0$, then
 \begin{align*}
\|f\cdot g\|_{\mathcal H^1}\lsm \|f\|_p \|g\|_{p'},
\end{align*}
 where $1<p<\infty$ and $p^{\prime}=p/(p-1)$.  In light of the proof in Theorem \ref{t2}, we can obtain the
following generalisation which can also be regarded as a fractional Leibniz rule in Hardy space.
\begin{thm}[Generalised Div-Curl lemma]\label{dc}
Let $1<p_1,p_2<\infty$ and $-1<s<\infty$. Then for any Schwartz $f$, $g$: $\mathbb R^3\to \mathbb R^3$ with
\begin{align*}
\nabla\cdot f=0,\ \ \nabla\times g=0,
\end{align*}
we have
\begin{align*}
\||\nabla|^s(f\cdot g)\|_{\mathcal H^1}\lsm \||\nabla|^s f\|_{p_1}\|g\|_{p_1'}+\|f\|_{p_2'}\||\nabla|^s g\|_{p_2}
\end{align*}
whenever the RHS is finite. When $-1<s\le 0$, we have 
\begin{align*}
\||\nabla|^s(f\cdot g)\|_{\mathcal H^1}\lsm 
\min\{ \||\nabla|^s f\|_{p_1}\|g\|_{p_1'}, \;\, \|f\|_{p_2'}\||\nabla|^s g\|_{p_2} \}.
\end{align*}
\end{thm}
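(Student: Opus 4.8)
The plan is to use the Littlewood--Paley decomposition and exploit the structure $\nabla\cdot f=0$, $\nabla\times g=0$ to rewrite the high-high interaction in a divergence form, which is the crux of the matter. Write $f=\sum_j \pd j f$, $g=\sum_k \pd k g$ and split the product $f\cdot g$ into the usual three pieces: the low-high paraproduct $\sum_j S_{j-2}f\cdot \pd j g$, the high-low paraproduct $\sum_j \pd j f\cdot S_{j-2}g$, and the diagonal (high-high) part $\sum_{|j-k|\le 2} \pd j f\cdot \pd k g$. The two paraproduct pieces are essentially harmless: each dyadic block of, say, $\sum_j S_{j-2}f\cdot \pd j g$ is frequency-localized to $|\xi|\sim 2^j$, so applying $|\nabla|^s$ costs $2^{sj}$, and one distributes this as $2^{sj}\lesssim$ (frequency of the $\pd j g$ factor)$^s$; then Bernstein, H\"older with exponents $(p_1,p_1')$ (or $(p_2,p_2')$), and the square-function/maximal-function characterization of $\mathcal H^1$ (valid since $1<p_i<\infty$) give the bound by $\||\nabla|^s f\|_{p_1}\|g\|_{p_1'}$ or the symmetric term. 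This part works for \emph{all} real $s$.

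The heart of the proof is the high-high term, and this is where the null structure must be used. For $\nabla\cdot f=0$ one has $f=\nabla\times F$ for some vector potential $F$ (in 3D), and for $\nabla\times g=0$ one has $g=\nabla G$ for a scalar potential $G$; more efficiently, at the level of dyadic blocks $\pd j f=\nabla\times(|\nabla|^{-1}\pd j f\cdot(\text{bounded multiplier}))$ and similarly $\pd k g=\nabla(|\nabla|^{-1}\cdots)$, so that up to harmless $0$-order Fourier multipliers $\pd j f\cdot\pd k g$ can be written as a linear combination of terms $\partial_a\big((|\nabla|^{-1}\widetilde\pd_j f)\,\partial_b(|\nabla|^{-1}\widetilde\pd_k g)\big)-(\text{symmetric})$, i.e. a \emph{divergence of a commutator-type expression}. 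This is exactly the 3D Div--Curl cancellation of \cite{clms}: the diagonal sum $\sum_{|j-k|\le 2}\pd j f\cdot\pd k g$ equals $\sum_a \partial_a h_a$ where each $h_a$ is a sum of products of $L^{p_1}$- and $L^{p_1'}$-type functions (losing one derivative on each factor, which is then regained by the outer $\partial_a$). Now applying $|\nabla|^s$ to $\sum_a\partial_a h_a$ and using that $\mathcal H^1$ is stable under $|\nabla|^s\partial_a |\nabla|^{-1-s}\cdot|\nabla|^{1+s}$... more precisely, one writes $|\nabla|^s\partial_a h_a$ and notes that each $h_a$ is frequency-localized to an annulus $|\xi|\sim 2^j$ (since both factors are), so $|\nabla|^s$ again costs only $2^{sj}$, which is distributed onto the two factors; the net effect is $\||\nabla|^s(\sum_a\partial_a h_a)\|_{\mathcal H^1}\lesssim \||\nabla|^s f\|_{p_1}\|g\|_{p_1'}+\|f\|_{p_2'}\||\nabla|^s g\|_{p_2}$ via the atomic/square-function characterization of $\mathcal H^1$ and H\"older. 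The restriction $s>-1$ enters precisely here: the divergence form supplies one clean derivative, so $|\nabla|^{1+s}$ with $1+s>0$ is what actually acts after the cancellation is extracted, and for $s\le -1$ one cannot absorb the $|\nabla|^{-1}$ from the potentials.

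For the improved statement when $-1<s\le 0$, observe that in this range one can put \emph{all} the smoothing on a single factor. Indeed, for $s\le 0$ the operator $|\nabla|^s$ is a negative-order smoothing, and in the paraproduct estimates above one is free to assign $2^{sj}$ entirely to whichever factor is the high-frequency one in each piece; combined with the divergence structure in the high-high piece, one checks that $\||\nabla|^s(f\cdot g)\|_{\mathcal H^1}$ is controlled by $\||\nabla|^s f\|_{p_1}\|g\|_{p_1'}$ alone, and by symmetry of the roles (here one uses $\nabla\cdot f=0$ and $\nabla\times g=0$ interchangeably after integrating by parts in the bilinear form) also by $\|f\|_{p_2'}\||\nabla|^s g\|_{p_2}$ alone, hence by the minimum. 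The main obstacle I anticipate is bookkeeping the commutator/divergence decomposition of the high-high term cleanly enough that the outer $|\nabla|^s$ (with $s$ possibly negative but $>-1$) can be absorbed without creating spurious low-frequency divergences — i.e. making rigorous the informal claim ``$|\nabla|^s\partial_a$ composed with the potential operators is a bounded family of Calder\'on--Zygmund-type operators on the relevant dyadic blocks, uniformly in $j$'' — and verifying that the resulting series over $j$ sums (which it does, because each block is honestly frequency-localized, so one gets a square function and invokes the $\mathcal H^1$--square-function equivalence together with the Fefferman--Stein inequality, legitimate since $1<p_i<\infty$).
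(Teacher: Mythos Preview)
Your approach is essentially the paper's: Bony paraproduct decomposition, with the diagonal piece put in divergence form via the Div--Curl structure. The paper's execution is cleaner in two respects. First, for the diagonal the paper simply writes $g=\nabla\psi$ (from $\nabla\times g=0$) and uses $\nabla\cdot\tilde f_j=0$ to obtain directly $\tilde f_j\cdot g_j=\nabla\cdot(\tilde f_j\psi_j)$; no vector potential for $f$ and no commutator expression are needed. Second, the paper observes that for $s>0$ the Div--Curl condition is entirely unnecessary on the diagonal, since $\||\nabla|^s(c)\|_{\dot B^0_{1,1}}\lesssim\sum_k 2^{ks}\sum_{j\ge k-10}\|\tilde f_j g_j\|_1$ already sums by positivity of $s$; the structure is invoked only for $-1<s\le 0$, where the gained derivative makes the effective exponent $1+s>0$. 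One small imprecision in your treatment of the $\min$ estimate: for the low-high piece $\sum_j f_{\le j-2}\cdot g_j$, bounding by $\||\nabla|^s f\|_{p_1}\|g\|_{p_1'}$ requires assigning $2^{js}$ to the \emph{low}-frequency factor via $\sup_j 2^{js}|f_{\le j-2}|\lesssim \mathcal M(|\nabla|^s f)$, which is precisely where $s\le 0$ (and $s>-1$, for the kernel to be $L^1$) is used on the paraproduct side --- this is a direct pointwise bound, not a symmetry/integration-by-parts argument.
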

\begin{rem*}
For $s>0$ one does not need the ``Div-Curl" condition to derive the estimate. The only nontrivial case is $-1<s\le 0$. 
\end{rem*}

In Theorem \ref{t0}, for the Besov case we only considered the regime $0<s<1$ and left out the cases $s=0$ and $s=1$. It is instructive
to investigate these end-point cases. As it turns out, the corresponding regularity estimate for
pressure fails in general. To clarify this point we construct counterexamples.

\begin{thm}\label{prop1}
The estimate \eqref{bes} fails in the case $s=0,1$. More specifically, for any $\eps>0$, there exists divergence free $u\in \mathcal S(\R^n)$ for with
\begin{align*}
\|u\|_{\dot B^{1}_{\infty,\infty} }+\|u\|_{L^2}\le 1,
\end{align*}
but
\begin{align*}
\|p\|_{\dot B^{2}_{\infty,\infty} }>\frac 1{\eps}.
\end{align*}
A similar statement holds for $s=0$. Here $\mathcal S(\mathbb R^n)$ is the class of Schwartz functions. 
\end{thm}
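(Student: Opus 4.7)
The approach is to construct, in dimension two, a lacunary stream-function counterexample whose Hessian determinant exhibits a logarithmic blow up in $\dot B^0_{\infty,\infty}$ as $N\to\infty$; the extension to $\R^n$ with $n\ge 3$ is obtained by taking the same two nontrivial components of the 2D vector field and multiplying by cutoffs in the remaining variables $x_3,\dots,x_n$ (these preserve divergence-freeness since they do not depend on $x_1,x_2$). Fix a Schwartz function $\Psi:\R^2\to\R$ with $\widehat\Psi$ supported in the annulus $\{1/2\le|\xi|\le 2\}$ and with $\operatorname{Hess}\Psi(0)$ equal to the identity matrix, so in particular $\det\operatorname{Hess}\Psi(0)=1$. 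For $N\ge 1$ set
\[
\psi_N(x)=\sum_{j=1}^N 2^{-2j}\Psi(2^j x),\qquad u_N=\nabla^\perp\psi_N.
\]
Each Littlewood-Paley block $P_j u_N$ is essentially $2^{-j}(\nabla^\perp\Psi)(2^j\cdot)$, which is frequency-localized at scale $2^j$ with $L^\infty$ norm $\sim 2^{-j}$ and $L^2$ norm $\sim 2^{-2j}$, giving the uniform bound $\|u_N\|_{\dot B^1_{\infty,\infty}}+\|u_N\|_{L^2}\lesssim 1$.

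For divergence-free velocity in two dimensions, the identity \eqref{4a} simplifies to the Monge-Ampère form $-\Delta p_N=-2\det\operatorname{Hess}\psi_N$, hence
\[
-\Delta p_N(x)=-2\sum_{j,k=1}^{N}\bigl[\Psi_{11}(2^j x)\Psi_{22}(2^k x)-\Psi_{12}(2^j x)\Psi_{12}(2^k x)\bigr].
\]
Fix $j_0\in\{4,\dots,N\}$. By the annular spectrum of $\widehat\Psi$, each product $\Psi_{ab}(2^j\cdot)\Psi_{cd}(2^{j_0}\cdot)$ with $j\le j_0-3$ is frequency-localized in $|\xi|\sim 2^{j_0}$, so $P_{j_0}$ acts as the identity. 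Gathering the low-high paraproduct pairs $(j\le j_0-3,k=j_0)$ with their symmetric counterparts and evaluating at $x=0$ yields
\[
P_{j_0}(-\Delta p_N)(0)=-4(j_0-3)\det\operatorname{Hess}\Psi(0)+O(1),
\]
where the remainder collects the resonant terms $|j-k|\le 2$ and the high-high pairs with $\max(j,k)>j_0$. Since $\|P_{j_0}(-\Delta p_N)\|_\infty\ge|P_{j_0}(-\Delta p_N)(0)|$, we obtain $\|P_{j_0}(-\Delta p_N)\|_\infty\gtrsim j_0$, and the Besov isomorphism $\|p\|_{\dot B^2_{\infty,\infty}}\sim\|-\Delta p\|_{\dot B^0_{\infty,\infty}}$ then gives $\|p_N\|_{\dot B^2_{\infty,\infty}}\gtrsim N$. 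For given $\eps>0$, choosing $N$ larger than a constant multiple of $1/\eps$ and dividing by a bounded normalization constant produces the asserted $u$.

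The $s=0$ case is handled by the analogous lacunary scheme with the rescaled stream function $\psi_N(x)=\sum_j 2^{-j}\Phi(2^j x)$; here the divergence-free Monge-Ampère identity carries the null Lagrangian cancellation $\int\det\operatorname{Hess}\Phi=0$ which partially suppresses the naive paraproduct blow up, so the building block $\Phi$ (or the spacing of frequencies) must be selected with care to preserve a quantitative lower bound. In both regimes the principal technical obstacle is making the lower bound rigorous as an $L^\infty$ estimate: the pointwise computation at the origin is straightforward, but one must verify that the resonant and high-high remainder terms contribute only $O(1)$ uniformly in $N$, so that the paraproduct blow up is not merely a pointwise phenomenon but actually survives in the sup norm of the Littlewood-Paley block $P_{j_0}(-\Delta p_N)$.
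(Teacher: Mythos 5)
Your overall strategy — stream-function ansatz, para-product decomposition, linear accumulation in the off-diagonal pairs, evaluation at the origin — is the same as the paper's, but the execution has a genuine gap at the two places you yourself flag as "obstacles," and neither is a mere formality.

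First, the claim that "$P_{j_0}$ acts as the identity" on the low-high products $\Psi_{ab}(2^j\cdot)\Psi_{cd}(2^{j_0}\cdot)$ for $j\le j_0-3$ is false with your \emph{dense} lacunary scales $2^j$. The Fourier support of such a product is roughly $\bigl[\tfrac12\cdot2^{j_0}-2\cdot2^j,\; 2\cdot2^{j_0}+2\cdot2^j\bigr]$, which already spans about three dyadic scales even when $j\le j_0-3$; a sharp Littlewood--Paley projection $P_{j_0}$ truncates this support and therefore does \emph{not} return the original product (nor its value at the origin). The paper sidesteps this completely by taking \emph{super-lacunary} scales $2^{j^2}$, so that for $j_2<j_1$ the ratio $2^{j_2^2}/2^{j_1^2}\le 2^{-2j_1+1}$ is essentially zero, the cross-term's Fourier support sits entirely inside the single annulus $\{\tfrac12 2^{j_1^2}\le|\xi|\le\tfrac76 2^{j_1^2}\}$, and $P_{j_1^2}$ genuinely equals the identity on the cross terms. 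With your $2^j$ scales the claimed identity $P_{j_0}(-\Delta p_N)(0)=-4(j_0-3)\det\operatorname{Hess}\Psi(0)+O(1)$ is not established.

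Second, the $O(1)$ bound on the diagonal/resonant remainder is asserted but never proved, and it is not automatic: for a fixed $j_0$ every diagonal term $\det\operatorname{Hess}\bigl(\Psi(2^j\cdot)\bigr)$ with $j>j_0$ contributes to $P_{j_0}$, and there are $O(N)$ such terms. The cancellation that saves the day is precisely the null-Lagrangian (div-curl) structure of the Monge--Amp\`ere determinant, which the paper exploits explicitly in its Step~4 by rewriting the diagonal piece as a total divergence; this yields a decaying factor per term (in the paper, $2^{-j^2}$) and hence a summable bound for $\|P_l H_1\|_\infty$. You name the cancellation but do not carry it out, which is exactly the missing content.

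Third, for $s=0$ you propose the analogous lacunary stream function $\sum_j 2^{-j}\Phi(2^jx)$ and then observe that the null-Lagrangian cancellation "partially suppresses the naive paraproduct blow up, so the building block must be selected with care" — but you stop there. The paper's $s=0$ construction is in fact qualitatively different: it modulates a \emph{fixed} bump $g$ at widely separated frequencies, $\phi=\sum_j k_j^{-1}\cos(k_jx_2)g(x)$, and the inflation comes from the resonant ``DC component'' $\tfrac12 J g(x)^2$ accumulating in $(\partial_2\phi)^2$, which survives $\Delta^{-1}\partial_{11}$ at low frequency. This is not a paraproduct high-frequency blow-up at all, so the lacunary scheme you sketch would not produce it without a substantially different argument.

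In short, the ansatz and the big picture match the paper, but the dense lacunary scaling breaks the central frequency-localisation step, the diagonal bound is not carried out, and the $s=0$ case is left unaddressed; the paper's sparse scaling $2^{j^2}$ and the explicit div-curl rewriting of the diagonal piece are exactly the devices needed to close these gaps.
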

\begin{rem*}
Our construction shows that the inflation occurs at high frequencies. 
\end{rem*}
In the estimate \eqref{bes}, when $s=\frac 12$, $p\in \dot
B^{1}_{\infty,\infty} $. The norm of $\dot B^{1}_{\infty,\infty} $ is much weaker than the norm of
$C^1$. Of course it is not difficult to construct a function which lies in $\dot B^{1}_{\infty,\infty} $
but not in $C^1$. However, it is not obvious  to construct a
$p\notin C^1$ starting from a divergence free $u \in C^{\frac 12}$. The following theorem gives such
 a result.

\begin{thm}\label{ct2}
There exists $u\in C^{\frac 12} \cap L^2$ for which $p
\notin C^1$.
\end{thm}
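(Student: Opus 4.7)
\textbf{Proof plan (for Theorem \ref{ct2}).}
The strategy is to construct, for each integer $N\ge 1$, a divergence-free Schwartz field $u_N$ whose $C^{1/2}\cap L^2$ norm is uniformly controlled but whose associated pressure gradient $\nabla p_N$ has $L^\infty$ norm blowing up with $N$, and then to glue translated copies of the $u_N$'s into a single $u\in C^{1/2}\cap L^2$ with $p\notin C^1$.

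For the single-scale piece, fix a real Schwartz bump $\phi$ with $\widehat\phi$ compactly supported near the origin and $\phi(0)>0$. Choose lacunary vectors $\xi_1,\dots,\xi_N\in\R^n$ with $|\xi_j|\sim 2^j$ in generic position (so that the outputs $\xi_j\pm\xi_{j'}$ are pairwise distinct and non-parallel for $j\ne j'$), and pick unit polarizations $A_j\perp\xi_j$ arranged so that $|A_j\cdot\xi_{j-1}|\sim 2^{j-1}$ and $|A_{j-1}\cdot\xi_j|\sim 2^j$. Define
\[
\tilde u_N(x)=\phi(x)\sum_{j=1}^N 2^{-j/2}A_j\cos(\xi_j\cdot x),\qquad u_N:=\mathbb P\tilde u_N,
\]
where $\mathbb P$ is Leray projection. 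Since each $\xi_j$-summand has Fourier support concentrated near $\xi_j$ and $A_j\perp\xi_j$, the Leray correction is lower-order; by dyadic lacunarity one checks
$\|u_N\|_{\dot B^{1/2}_{\infty,\infty}}\lesssim\|\phi\|_\infty$ and $\|u_N\|_{L^2}\lesssim\|\phi\|_{L^2}$, both uniform in $N$.

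The heart of the matter is the pressure computation $-\Delta p_N=\sum_{k,l}\partial_k\partial_l(u_{N,k}u_{N,l})$. By $A_j\cdot\xi_j=0$, the diagonal self-interactions $\tilde u_{N,j}\otimes\tilde u_{N,j}$ lose their leading $|\xi|^2$ coefficient and only contribute harmless lower-order terms where derivatives hit $\phi^2$. The dominant contribution comes from adjacent off-diagonal pairs $(j,j-1)$: the product-to-sum identity yields oscillations at $\xi_j\pm\xi_{j-1}$ with coefficient
\[
2^{-(2j-1)/2}(A_j\cdot\xi_{j-1})(A_{j-1}\cdot\xi_j)\ \sim\ 2^{(2j-1)/2}
\]
in $-\Delta p_N$. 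Inverting $-\Delta$ divides by $|\xi_j\pm\xi_{j-1}|^2\sim 2^{2j}$ and then $\nabla$ restores a factor $2^j$, so each dyadic piece of $\nabla p_N$ is essentially $c_j\phi(x)^2\sin((\xi_j\pm\xi_{j-1})\cdot x)$ with $|c_j|\sim 1$. Because the output frequencies $\{\xi_j\pm\xi_{j-1}\}_{j=2}^N$ are pairwise well-separated, Plancherel (or Parseval on $\phi^2$-modulated trig polynomials) gives $\|\nabla p_N\|_{L^2}^2\gtrsim N\|\phi\|_{L^2}^2$; since $\nabla p_N$ is essentially supported in a fixed ball (up to Schwartz tails), Cauchy-Schwarz yields $\|\nabla p_N\|_{L^\infty}\gtrsim \sqrt N\to\infty$.

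To assemble the counterexample, pick centers $y_k\in\R^n$ with $|y_k-y_{k'}|\to\infty$ very fast in $\max(k,k')$, set $N_k=2^{4k}$, and define
\[
u(x)=\sum_{k=1}^{\infty}2^{-k/2}\,u_{N_k}(x-y_k).
\]
Schwartz decay makes the summands essentially disjointly supported, so the triangle inequality gives $u\in C^{1/2}\cap L^2$ with $\|u\|_{C^{1/2}}+\|u\|_{L^2}\lesssim\sum 2^{-k/2}<\infty$. Near each $y_k$ the cross-products $u_{N_k}(\cdot-y_k)\,u_{N_l}(\cdot-y_l)$ with $l\ne k$ are pointwise controlled by Schwartz tails, so $p[u]$ equals $2^{-k}p_{N_k}(\cdot-y_k)$ plus a smooth correction uniformly bounded in $C^1$. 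Hence
\[
\|\nabla p[u]\|_{L^\infty}\gtrsim \sup_k 2^{-k}\|\nabla p_{N_k}\|_{L^\infty}\gtrsim \sup_k 2^{-k}\sqrt{N_k}=\sup_k 2^k=\infty,
\]
which forces $p\notin C^1$. The main obstacle is the tension between the $L^2$-integrability of $u_N$ (which forces a physical-space localization by $\phi$) and the sharpness of the divergence-free cancellation (which is cleanest for pure plane waves); the delicate step is verifying that the Leibniz terms arising when derivatives fall on $\phi^2$ remain strictly lower-order than the main $|\xi|^2$-coefficient, so that the $A_j\cdot\xi_j=0$ cancellation in the diagonal interactions survives the smearing.
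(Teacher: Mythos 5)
Your proposal is correct in outline but takes a genuinely different route from the paper. The paper reduces Theorem \ref{ct2} to a quantitative blow-up statement (Proposition \ref{prop3}): for any $\epsilon>0$ there is a divergence-free Schwartz $u$ with $\|u\|_{\dot B^{1/2}_{\infty,\infty}}+\|u\|_2\le 1$ and $\|\nabla p\|_\infty>1/\epsilon$. The paper's construction makes the \emph{diagonal} self-interactions dominate: it takes $u=\nabla^\perp\phi$ with $\phi=\sum_j N_j^{-3/2}\phi_0(N_j\cdot)$ for super-lacunary $N_j=2^{3^{j^2}}$ (so $N_{j-1}<N_j^{1/2}$). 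Each self-interaction $(-\Delta)^{-1}\nabla\nabla\cdot(v_j\cdot\nabla v_j)$ is exactly scale-invariant when evaluated at $x=0$, so the $J$ scales give $\nabla p(0)\sim J$, while the off-diagonal pairs are made negligible using the super-lacunarity. Your construction does the opposite: the plane-wave polarizations $A_j\perp\xi_j$ kill the $(A_j\cdot\xi_j)^2$ coefficient of each diagonal self-interaction down to harmless terms where derivatives land on $\phi^2$, and the growth is harvested from the adjacent off-diagonal pairs via $L^2$ quasi-orthogonality, yielding $\|\nabla p_N\|_2\gtrsim\sqrt{N}$ and then $\|\nabla p_N\|_\infty\gtrsim\sqrt{N}$. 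Both work; the paper's is a pointwise argument at a single point with linear growth in the number of scales, yours is an integral ($L^2$) argument with $\sqrt{N}$ growth and an extra localization step.

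Two places in your write-up need more care, both fillable. First, the step from $\|\nabla p_N\|_2\gtrsim\sqrt N$ to $\|\nabla p_N\|_\infty\gtrsim\sqrt N$ is not pure Cauchy--Schwarz since $\nabla p_N$ is only Schwartz, not compactly supported; you should fix a ball $B_R$ with $\|\phi^2\|_{L^2(B_R)}\gtrsim\|\phi^2\|_{L^2}$, rerun the orthogonality of the modulated pieces $c_j\,\phi^2\sin((\xi_j\pm\xi_{j-1})\cdot x)$ on $L^2(B_R)$ using a smooth cutoff and the separation $|\xi_j\pm\xi_{j-1}|\sim 2^j$, and then apply Cauchy--Schwarz on $B_R$. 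Second, in the gluing step, since $(-\Delta)^{-1}\nabla^2$ is nonlocal you should quantify the smallness of the cross-products $u_{N_k}(\cdot-y_k)\,u_{N_l}(\cdot-y_l)$, $k\ne l$, in (say) $L^1\cap L^\infty$ and invoke boundedness of the Riesz operator on an interpolated $L^q$ to bound the cross-contribution to $\nabla p$ uniformly; this is straightforward once $|y_k-y_l|$ grows fast enough. Making the gluing explicit is actually a welcome addition, since the paper only establishes the norm-inflation inequality and leaves the deduction of a single $u\in C^{1/2}\cap L^2$ with $p\notin C^1$ to the reader.
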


The last result says the estimate \eqref{40} fails at the endpoint case $s=0$.

\begin{prop}\label{prop5}
There exists divergence-free $u \in \mathcal S(\mathbb R^n)$ such
that $p \notin L^1(\mathbb R^n)$.
\end{prop}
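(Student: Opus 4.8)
The plan is to build a divergence-free Schwartz velocity $u$ for which the resulting pressure $p = \sum_{k,l} (-\Delta)^{-1}\partial_k\partial_l(u_k u_l)$ fails to be integrable. The obstruction to $L^1$ integrability of $p$ is not a local singularity (the formula for $p$ is a Calder\'on--Zygmund operator applied to the smooth, rapidly decaying function $u\otimes u$, so $p$ is smooth) but rather slow spatial decay at infinity: a function produced by second-order Riesz transforms generically decays only like $|x|^{-n}$, which is exactly the borderline non-integrable rate. So the whole point is to arrange that the leading-order far-field asymptotics of $p$ do \emph{not} cancel. Concretely, for $u$ Schwartz one has the expansion, for large $|x|$,
\begin{align*}
p(x) = \sum_{k,l} R_k R_l (u_k u_l)(x) = \frac{c_n}{|x|^n}\, Q\!\left(\tfrac{x}{|x|}\right) \int_{\mathbb R^n} (u_k u_l)(y)\,dy \;+\; O(|x|^{-n-1}),
\end{align*}
where $Q$ is a fixed (non-trivial) spherical-harmonic-type kernel coming from the symbol $-\xi_k\xi_l/|\xi|^2$ summed against $\int u_k u_l$. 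The proposition follows once we choose $u$ so that the matrix $M_{kl} = \int_{\mathbb R^n} u_k(y)u_l(y)\,dy$ yields a nonzero leading term; since $|x|^{-n}$ is not integrable near infinity, $p\notin L^1$ unless that leading coefficient vanishes identically on the sphere, and a direct computation shows the only way to kill it is $M = c\,\mathrm{Id}$, i.e. $\int u_ku_l = c\,\delta_{kl}$ — but even then the next inspection shows the trace part produces a nonzero multiple of $|x|^{-n}$ as well (it is $(-\Delta)^{-1}\Delta$-type only formally; the pointwise kernel of $\sum_k R_k^2 = -I$ plus a delta does not help since we are summing $R_kR_l$ against distinct entries). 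So in fact \emph{no} choice of Schwartz divergence-free $u$ with $u\not\equiv 0$ can make the $|x|^{-n}$ term vanish, and $p\notin L^1$ for essentially \emph{every} such $u$.

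The steps I would carry out, in order, are: (i) record the kernel representation $p = K * (u\otimes u)$ where $K_{kl}$ is the convolution kernel of $R_kR_l$, homogeneous of degree $-n$ away from the origin with mean-zero angular part; (ii) perform the far-field Taylor expansion $K_{kl}(x-y) = K_{kl}(x) + O(|x|^{-n-1}|y|)$ valid for $|x|\gg |y|$, integrate against $(u_ku_l)(y)$, and extract the leading term $K_{kl}(x)\,M_{kl}$; (iii) using the explicit formula $K_{kl}(x) = c_n\big(\delta_{kl}|x|^2 - n x_kx_l\big)/|x|^{n+2}$ (the standard kernel of $\partial_k\partial_l(-\Delta)^{-1}$, so $\sum_k K_{kk} = 0$), compute $\sum_{k,l}K_{kl}(x)M_{kl}$ and observe it is a nonzero degree-$(-n)$ homogeneous function of $x$ whenever $M$ is not a scalar multiple of the identity, and is \emph{still} nonzero (equal to $-c_n n \,\mathrm{tr}(M)\, x_kx_l/\dots$ reorganized — here one checks the surviving term) even when $M = c\,\mathrm{Id}$, because $\sum_{k,l}K_{kl}\delta_{kl} = \sum_k K_{kk} = 0$ actually \emph{does} vanish, so I must be more careful: the honest statement is that $M$ is automatically \emph{not} a scalar multiple of the identity for a well-chosen $u$, e.g. take $u$ supported in Fourier space so that $u_1$ carries strictly more $L^2$ mass than $u_2$; (iv) conclude $\int_{|x|>R}|p(x)|\,dx = +\infty$ and hence $p\notin L^1(\mathbb R^n)$.

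The main obstacle, and the one place where genuine care is needed, is Step (iii): ruling out accidental cancellation in the angular function $x\mapsto \sum_{k,l}K_{kl}(x)M_{kl}$. One must explicitly choose a divergence-free Schwartz $u$ — e.g. in $\mathbb R^2$ take $u = \nabla^\perp\psi$ for a suitable real Schwartz stream function $\psi$, or in $\mathbb R^n$ take $u = \mathbb P\,v$ for a bump vector field $v$ with $\mathbb P$ the Leray projector — and then \emph{verify} that the symmetric matrix $M_{kl} = \int u_k u_l$ has the property that $\sum_{k,l}(\delta_{kl}|x|^2 - nx_kx_l)M_{kl}/|x|^{n+2}$ is not the zero function; since $\sum_k(\delta_{kk}|x|^2 - nx_k^2) = n|x|^2 - n|x|^2 = 0$ forces us away from $M = c\,\mathrm{Id}$, the concrete construction must arrange $M$ anisotropic, which is easy: pick $\psi$ (resp. $v$) so that $u$ genuinely oscillates more in one coordinate than another. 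Once $M$ is anisotropic, the quadratic form $\sum_{k,l}(\delta_{kl}|x|^2 - nx_kx_l)M_{kl}$ is a nonzero harmonic polynomial of degree $2$ (it is $|x|^2\,\mathrm{tr}(M) - n\, x^{\mathsf T}Mx$, which is not identically zero precisely when $M$ is not scalar), and dividing by $|x|^{n+2}$ gives a nonzero degree-$(-n)$ homogeneous leading term; integrating its absolute value over $\{|x|>1\}$ diverges logarithmically, completing the argument. All remaining estimates — that the $O(|x|^{-n-1})$ remainder is integrable at infinity, and that $p$ is bounded on compact sets — are routine given that $u\otimes u$ is Schwartz.
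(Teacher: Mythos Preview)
Your proposal is correct and arrives at the same obstruction as the paper, but via the dual (physical-space) route rather than the Fourier-side argument the paper uses.

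The paper takes $u=\nabla^\perp\phi$, writes $p=2(-\Delta)^{-1}g$ with $g=(\partial_{12}\phi)^2-\partial_{11}\phi\,\partial_{22}\phi$, and computes the Taylor expansion of $\hat g$ at the origin: $\hat g(0)=0$, $\nabla\hat g(0)=0$, but the Hessian is anisotropic, namely $\hat g(\xi)=c_1\xi_1^2+c_2\xi_2^2+O(|\xi|^4)$ with $c_1\ne c_2$ (arranged by choosing $\phi$ with $\int(\partial_1\phi)^2\ne\int(\partial_2\phi)^2$). Then $\hat p(\xi)=2\hat g(\xi)/|\xi|^2$ is discontinuous at $\xi=0$, so $p\notin L^1$ by Riemann--Lebesgue. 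This is a two-line contradiction once the moments of $g$ are computed.

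Your argument is the real-variable mirror image: you expand the Calder\'on--Zygmund kernel $K_{kl}(x)=c_n(\delta_{kl}|x|^2-nx_kx_l)/|x|^{n+2}$ at infinity and read off the leading far-field term $\sum_{k,l}K_{kl}(x)M_{kl}$ with $M_{kl}=\int u_ku_l$. The two approaches are equivalent --- the anisotropy condition you isolate ($M$ not a scalar multiple of the identity) is exactly the paper's condition $\int u_1^2\ne\int u_2^2$ in disguise, and the non-integrable $|x|^{-n}$ tail is precisely what the jump of $\hat p$ at the origin encodes. Your route gives explicit spatial asymptotics for $p$, which is informative; the paper's route is shorter because discontinuity of $\hat p$ is a one-shot criterion.

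One remark on presentation: your middle paragraph briefly goes in a circle about whether the $M=c\,\mathrm{Id}$ case survives before you (correctly) realize that $\sum_k K_{kk}=0$ kills the scalar part and that you must \emph{choose} $u$ to make $M$ anisotropic. In a clean write-up, just state from the outset that you will pick $u=\nabla^\perp\psi$ (or $u=\mathbb P v$) with $\int u_1^2\ne\int u_2^2$, verify that the degree-two harmonic polynomial $|x|^2\operatorname{tr}M - n\,x^{\mathsf T}Mx$ is then nonzero, and proceed directly to the divergence of $\int_{|x|>1}|p|$.
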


We should remark that the case of domains with appropriate boundary conditions can be explored further and we plan to address it elsewhere. 

The rest of this paper is organised as follows. In Section 2 we introduce some basic notation and collect
some preliminary estimates. In Section 3 we give the regularity estimates of pressure in aforementioned function
spaces. In Section 4 we give the construction of counterexamples at various end-point cases.

\subsection*{Acknowledgements}
D. Li was supported in part by a start-up grant from HKUST and HK RGC grant 16307317. X. Zhang was supported by Simons Collaboration grant.

\section{preliminaries}
For any real number $a\in \mathbb R$, we denote by $a+$ the quantity $a+\epsilon$ for sufficiently small
$\epsilon>0$. The numerical value of $\epsilon$ is unimportant and the needed
smallness of $\epsilon$ is usually clear from the context. 
The notation $a-$ is similarly defined. 

For any two quantities $X$ and $Y$, we denote $X \lesssim Y$ if
$X \le C Y$ for some constant $C>0$. Similarly $X \gtrsim Y$ if $X
\ge CY$ for some $C>0$. We denote $X \sim Y$ if $X\lesssim Y$ and $Y
\lesssim X$. The dependence of the constant $C$ on
other parameters or constants are usually clear from the context and
we will often suppress  this dependence. We shall denote
$X \lesssim_{Z_1, Z_2,\cdots,Z_k} Y$
if $X \le CY$ and the constant $C$ depends on the quantities $Z_1,\cdots, Z_k$.

We denote by $\mathcal S(\mathbb R^n)$ the space of Schwartz functions and $\mathcal
S^{\prime}(\mathbb R^n)$ the space of tempered distributions. 
  For any function $f:\; \mathbb R^n\to
\mathbb R$, we use $\|f\|_{L^q(\mathbb R^n)}$, $\|f\|_{L^q}$ or sometimes $\|f\|_q$ to denote
the  usual Lebesgue $L^q$ norm  for $0< q \le
\infty$. For $s>0$, $1<q<\infty$, we recall the Sobolev norms
\begin{align*}
&\|f \|_{\dot W^{s,q}(\mathbb R^n)} = \| |\nabla|^s f \|_{q},
\quad \| f\|_{W^{s,q}(\mathbb R^n)}= \| f\|_q+ \| |\nabla|^s f \|_q.
\end{align*}

For a sequence of real numbers $(a_j)_{j={-\infty}}^{\infty}$, we denote
\begin{align*}
(a_j)_{l_j^q}=\|(a_j)_{j\in \mathbb Z} \|_{l^q} =\begin{cases}
(\sum_{j\in \mathbb Z} |a_j|^q )^{\frac 1q}, \qquad \text{if $0<q<\infty$}, \\
\sup_{j} |a_j|, \qquad \text{if $q=\infty$}.
\end{cases}
\end{align*}
We shall often use mixed-norm notation. For example, for a sequence of functions $f_j: \;
\mathbb R^n \to \mathbb R$,
we will denote (below $0<r<\infty$)
\begin{align*}
\| (f_j)_{l_j^r} \|_{q} =\| (\sum_j |f_j(x)|^r)^{\frac 1r} \|_{L^q_x(\mathbb R^n)},
\end{align*}
with obvious modification for $q=\infty$.

We use the following convention for the Fourier transform: 
\begin{align*}
(\mathcal F f)(\xi)=\hat f(\xi)=\int_{\mathbb R^n}e^{- ix\cdot \xi}f(x) dx. 
\end{align*}
$\mathcal F^{-1}$ is the inverse Fourier transform:
\begin{align*}
\mathcal F^{-1} g(x)= \frac  1 {(2\pi)^n} \int_{\mathbb R^n} e^{ ix\cdot \xi}g(\xi) d\xi. 
\end{align*}
For $s\in \mathbb R$, the fractional Laplacian $|\nabla|^s= (-\Delta)^{s/2}$ corresponds to the 
multiplier $|\xi|^s$ on the Fourier side (whenever it is well-defined). Sometimes we also
denote  $|\nabla|^s$ as $D^s$.

We first introduce the Littlewood-Paley operators.

Let $\tilde \phi\in C_c^\infty(\R^n)$ be such that
\begin{align*}
\tilde\phi(\xi )=\begin{cases} 1, & |\xi|\le 1\\
0, & |\xi|>\frac 7 6.
\end{cases}
\end{align*}
Let $\phi_c (\xi)=\tilde \phi(\xi)-\tilde \phi(2\xi)$ which is supported on $\frac 12\le |\xi| \le \frac 76$. 
For any $f \in \mathcal S(\mathbb R^n)$, $j \in \mathbb Z$, define
\begin{align*}
 &\mathcal F ( P_{\le j} f )(\xi)=\widehat{P_{\le j} f} (\xi) = \tilde \phi(2^{-j} \xi) \hat f(\xi), \\
 &\widehat{P_j f} (\xi) = \phi_c(2^{-j} \xi) \hat f(\xi), \qquad \xi \in \mathbb R^n.
\end{align*}
We will denote $P_{>j} = I-P_{\le j}$ ($I$ is the identity operator).  Sometimes for simplicity of
notation (and when there is no obvious confusion) we will write $f_j = P_j f$, $f_{\le j} = P_{\le j} f$ and
$f_{a\le\cdot\le b} = \sum_{a\le j\le b} f_j$.
By using the support property of $\phi$, we have $P_j P_{j^{\prime}} =0$ whenever $|j-j^{\prime}|>1$.
This property will be useful in product decompositions. For example the Bony paraproduct for a pair of functions
$f,g$ take the form
\begin{align*}
f g = \sum_{i \in \mathbb Z} f_i \tilde g_i + \sum_{i \in \mathbb Z} f_i g_{\le i-2} + \sum_{i \in \mathbb Z}
g_i f_{\le i-2},
\end{align*}
where $\tilde g_i = g_{i-1} +g_i + g_{i+1}$.

The fattened operators $\tilde P_j$ are defined by
\begin{align*}
 \tilde{P}_j = \sum_{l=-{n_1}}^{n_2} P_{j+l},
\end{align*}
where $n_1\ge 0$, $n_2\ge 0$ are some finite integers whose values play no role in the argument.

We will often use the following Bernstein inequalities without explicit mentioning.

\begin{prop}[Bernstein inequality]
Let $1\le p\le q\le \infty$. For any $f\in L^p(\mathbb R^n)$, $j \in \mathbb Z$, we have
\begin{align*}
&\|P_{\le j} f\|_q +\|P_jf \|_q\lesssim 2^{jn(\frac 1p-\frac 1q)}\|f\|_p; \\
&\||\nabla|^s P_{\le j } f\|_p \lsm 2^{js}\|f\|_p,\quad \forall\, s\ge 0;\\
&\|P_j f\|_p \sim 2^{-js} \||\nabla|^s P_j f\|_p, \quad \forall\, s\in \mathbb R;\\
&\|P_{>j} f\|_p \lesssim 2^{-js} \||\nabla|^s f\|_p, \quad \forall\, s\ge 0. 
\end{align*}
\end{prop}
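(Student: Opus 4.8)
The plan is to prove each of the four Bernstein-type estimates by passing to the Fourier side and writing the relevant multiplier operator as convolution against an $L^1$ kernel obtained by rescaling a fixed Schwartz function. First I would fix a bump $\Psi \in \mathcal S(\mathbb R^n)$ with $\widehat{\Psi} = 1$ on the support of $\phi_c$ (the annulus $\frac12 \le |\xi| \le \frac76$) so that $P_j f = f * 2^{jn}\Psi(2^j\cdot)$, and similarly a bump equal to $1$ on $|\xi|\le \frac76$ to handle $P_{\le j}$. Since $\|2^{jn}\Psi(2^j\cdot)\|_{L^1} = \|\Psi\|_{L^1}$ is independent of $j$, Young's inequality gives $\|P_j f\|_q \le \|2^{jn}\Psi(2^j\cdot)\|_{L^r}\|f\|_p$ with $\frac1q = \frac1p + \frac1r - 1$, i.e. $\frac1r = 1 - (\frac1p - \frac1q)$; and a change of variables shows $\|2^{jn}\Psi(2^j\cdot)\|_{L^r} = 2^{jn(1-\frac1r)}\|\Psi\|_{L^r} = 2^{jn(\frac1p - \frac1q)}\|\Psi\|_{L^r}$, which is exactly the claimed first inequality. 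The same computation with the $P_{\le j}$ kernel covers the $P_{\le j}$ half of that line.

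For the second inequality I would write $|\nabla|^s P_{\le j} f = f * K_j$ where $\widehat{K_j}(\xi) = |\xi|^s \tilde\phi(2^{-j}\xi)$. Scaling: $K_j(x) = 2^{j(n+s)} K_0(2^j x)$ where $\widehat{K_0}(\xi) = |\xi|^s \tilde\phi(\xi)$; since $|\xi|^s\tilde\phi(\xi)$ is $C_c^\infty$ for $s\ge 0$ (there is no singularity issue at the origin because we do not divide by $|\xi|$), $K_0 \in \mathcal S(\mathbb R^n)$, hence $\|K_j\|_{L^1} = 2^{js}\|K_0\|_{L^1} \lesssim 2^{js}$, and Young with the $L^1$ kernel gives the bound. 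For the third (two-sided) inequality, I would use that on the support of $\phi_c$ one can write $|\xi|^s = |\xi|^s \phi_c^\sharp(\xi)$ where $\phi_c^\sharp \in C_c^\infty$ equals $1$ on $\mathrm{supp}\,\phi_c$ and is supported in a slightly larger annulus bounded away from $0$; then $|\nabla|^s P_j f = P_j f * L_j$ with $\widehat{L_j}(\xi) = |\xi|^s \phi_c^\sharp(2^{-j}\xi)$, and since $|\xi|^s\phi_c^\sharp(\xi)$ is smooth and compactly supported away from the origin (so no singularity of $|\xi|^s$ enters even for negative $s$), $L_j(x) = 2^{js}\cdot 2^{jn}L_0(2^jx)$ with $L_0\in\mathcal S$ and $\|L_j\|_{L^1} = 2^{js}\|L_0\|_{L^1}$; this gives $\||\nabla|^s P_j f\|_p \lesssim 2^{js}\|P_j f\|_p$, and running the argument with $|\xi|^{-s}$ in place of $|\xi|^s$ gives the reverse, yielding $\sim$. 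For the fourth inequality I would sum: $P_{>j} f = \sum_{k>j} P_k f$, and $\||\nabla|^s f\|_p \gtrsim \|P_k |\nabla|^s f\|_p \sim 2^{ks}\|P_k f\|_p$ (by the third inequality applied with a fattened projector), so $\|P_k f\|_p \lesssim 2^{-ks}\||\nabla|^s f\|_p$; then $\|P_{>j} f\|_p \le \sum_{k>j}\|P_k f\|_p \lesssim \sum_{k>j} 2^{-ks}\||\nabla|^s f\|_p \lesssim 2^{-js}\||\nabla|^s f\|_p$ since $s>0$ makes the geometric series converge (the telescoping to a fattened $|\nabla|^s$-projector of $f$ avoids any issue of $|\nabla|^s f$ being only a tempered distribution).

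The only mildly delicate point — and the one I expect to be the ``main obstacle'' in a careful writeup — is the handling of the origin singularity of the symbol $|\xi|^s$ for $s<0$ in the third inequality: one must be sure to localize with a cutoff supported strictly away from $\xi = 0$ before multiplying by $|\xi|^s$, which is legitimate precisely because $\mathrm{supp}\,\phi_c$ is contained in an annulus bounded away from the origin, so $|\xi|^s$ restricted there is a genuine smooth symbol and its rescaled inverse Fourier transform is Schwartz with a scale-invariant $L^1$ norm up to the factor $2^{js}$. Everything else is Young's inequality plus the scaling $\|2^{jn}g(2^j\cdot)\|_{L^r} = 2^{jn(1-1/r)}\|g\|_{L^r}$, applied to fixed Schwartz kernels. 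I would present the first inequality in full as the model computation and then indicate the (identical) modifications for the remaining three.
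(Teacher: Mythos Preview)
The paper states this proposition without proof, treating it as standard; your approach via Young's inequality and $L^1$-scaling of convolution kernels is indeed the standard one and is essentially correct.

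There is, however, one genuine slip in your treatment of the second inequality: you assert that $|\xi|^s\tilde\phi(\xi)$ is $C_c^\infty$ for $s\ge 0$, hence $K_0\in\mathcal S(\mathbb R^n)$. This is false unless $s$ is a nonnegative even integer; for example $|\xi|\tilde\phi(\xi)$ is merely Lipschitz at the origin, and $|\xi|^{1/2}\tilde\phi(\xi)$ is only H\"older-$\tfrac12$ there. What you actually need is the weaker statement $K_0\in L^1$, which does hold for every $s>0$ but requires a different justification: either argue directly that $\check m$, with $m(\xi)=|\xi|^s\tilde\phi(\xi)$, is smooth (since $m$ is compactly supported) and decays like $|x|^{-n-s}$ at infinity (coming from the $|\xi|^s$ behaviour of $m$ near $\xi=0$), or sidestep the issue entirely by writing $P_{\le j}=\sum_{k\le j}P_k$ and invoking your correctly proved third inequality block by block, giving $\||\nabla|^s P_{\le j}f\|_p\le\sum_{k\le j}\||\nabla|^sP_kf\|_p\lesssim\sum_{k\le j}2^{ks}\|f\|_p\lesssim 2^{js}\|f\|_p$ for $s>0$. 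Two smaller points: in your first paragraph you want $\widehat\Psi=\phi_c$ rather than $\widehat\Psi=1$ on $\operatorname{supp}\phi_c$ for the identity $P_jf=f\ast 2^{jn}\Psi(2^j\cdot)$ to hold (with your $\Psi$ one instead has $P_jf=(P_jf)\ast 2^{jn}\Psi(2^j\cdot)$, which also feeds into Young); and in the fourth inequality your geometric-series argument needs $s>0$, so the case $s=0$, which is just $\|P_{>j}f\|_p\lesssim\|f\|_p$, should be handled separately.
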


For $s\in \mathbb R$, $1\le p, q \le \infty$, the homogeneous Besov
$\dot B^s_{p,q}$ (semi)-norm is given by
\begin{align*}
\|f \|_{\dot B^s_{p,q}(\mathbb R^n)}
= \|(2^{js} \|f_j\|_p )\|_{l_j^q}
=\begin{cases}
(\sum_{j \in \mathbb Z} 2^{jqs} \| P_j f\|_{L_x^p(\mathbb R^n)}^q )^{\frac 1q},
\quad \text{if $1\le q<\infty$};\\
\sup_{j\in \mathbb Z} 2^{js} \| P_j f\|_{L_x^p(\mathbb R^n)},\quad
\text{if $q=\infty$}.
\end{cases}
\end{align*}
For $s\in \mathbb R$, $1\le p,\, q\le \infty$, 
\begin{align*}
\| f \|_{B^s_{p,q}(\mathbb R^n)} = \| P_{\le 0} f \|_p + \| (2^{js} \| f_j \|_p) \|_{l_j^q(j\ge 1)}.
\end{align*}
For $s>0$ and $1\le p, q\le \infty$, it is easy to check that
\begin{align*}
\| f \|_{B^s_{p,q}} \sim \| f \|_p +  \|f \|_{\dot B^s_{p,q}}.
\end{align*}

\begin{prop}[Continuity in ``$s$"]
Let $1\le p, q\le \infty$. Suppose $ \|f \|_{B^s_{p,q}} <\infty$, then
\begin{align*}
\lim_{\substack{ \tilde s \to s \\ \tilde s <s}} \| f \|_{B^{\tilde s}_{p,q}} = \| f \|_{B^s_{p,q}}.
\end{align*}
On the other hand, if for some $\epsilon_0>0$, 
\begin{align*}
\sup_{\tilde s \in (s-\epsilon_0, s)} \| f \|_{B^{\tilde s}_{p,q}} <\infty,
\end{align*}
then $\| f \|_{B^s_{p,q}} <\infty$, and
\begin{align*}
\lim_{\substack{ \tilde s \to s \\ s-\epsilon_0<\tilde s <s}} \| f \|_{B^{\tilde s}_{p,q}} = \| f \|_{B^s_{p,q}}.
\end{align*}
If for some $\epsilon_1>0$, $\|f \|_{B^{s+\epsilon_1}_{p,q}} <\infty$, then
\begin{align*}
\lim_{\substack{\tilde s\to s\\ \tilde s>s}} \|f \|_{B^{\tilde s}_{p,q}} = \| f \|_{B^s_{p,q}}.
\end{align*}
\end{prop}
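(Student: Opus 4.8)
The plan is to strip away everything that does not depend on the smoothness index. Fix $p$, $q$ and $f$, and abbreviate $a_j=\|P_j f\|_p\ge 0$ for $j\ge 1$. Set $F(s)=\|(2^{js}a_j)\|_{l_j^q(j\ge 1)}$, so that $\|f\|_{B^s_{p,q}}=\|P_{\le 0}f\|_p+F(s)$. The first summand $\|P_{\le 0}f\|_p$ is a fixed finite constant, independent of $s$, which is finite under each of the three hypotheses because in every case it is one of the two summands of a norm assumed finite. Hence the entire proposition is a statement about the single scalar function $s\mapsto F(s)$ on $\R$, and I would carry out the proof in that language. The key structural fact — and the only place where the inhomogeneous normalisation $j\ge 1$ is used — is that $F$ is nondecreasing on $\R$: for each $j\ge 1$ the map $s\mapsto 2^{js}$ is nondecreasing, so each entry $2^{js}a_j$ of the sequence is nondecreasing in $s$, and a weighted $\ell^q$ norm is monotone in its entries. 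In particular, for $\tilde s<s$ we have $F(\tilde s)\le F(s)$, and the one‑sided limits $F(s-):=\sup_{\tilde s<s}F(\tilde s)$ and $F(s+):=\inf_{\tilde s>s}F(\tilde s)$ always exist in $[0,\infty]$.

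Left‑continuity of $F$ takes care of the first and second assertions. As $\tilde s\uparrow s$, for each $j\ge 1$ the number $2^{jq\tilde s}a_j^q$ increases to $2^{jqs}a_j^q$ when $q<\infty$, so the monotone convergence theorem for the counting measure on $\{j\ge 1\}$ gives $F(\tilde s)^q\uparrow F(s)^q$; when $q=\infty$ the identity $\sup_j 2^{j\tilde s}a_j\uparrow\sup_j 2^{js}a_j$ is immediate as a supremum of an increasing family. Either way $F(s-)=F(s)$, which is exactly the first displayed limit under the hypothesis $\|f\|_{B^s_{p,q}}<\infty$. For the second assertion, if $M:=\sup_{\tilde s\in(s-\epsilon_0,s)}F(\tilde s)<\infty$, then by monotonicity $F(s-)=M$ (values at $\tilde s\le s-\epsilon_0$ are dominated by values in $(s-\epsilon_0,s)$), and left‑continuity forces $F(s)=F(s-)=M<\infty$; hence $\|f\|_{B^s_{p,q}}<\infty$ and the stated limit is once again just left‑continuity.

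For right‑continuity I would exploit the strict gap $\epsilon_1>0$. Monotonicity already gives $F(s)\le F(s+\epsilon_1)<\infty$, so $\|f\|_{B^s_{p,q}}<\infty$. When $q<\infty$, for $s<\tilde s\le s+\epsilon_1$ we have the summable majorant $2^{jq\tilde s}a_j^q\le 2^{jq(s+\epsilon_1)}a_j^q$, so dominated convergence yields $F(\tilde s)^q\to F(s)^q$ as $\tilde s\downarrow s$. When $q=\infty$, finiteness of $F(s+\epsilon_1)$ gives the pointwise bound $a_j\le F(s+\epsilon_1)\,2^{-j(s+\epsilon_1)}$, hence for $s<\tilde s\le s+\epsilon_1/2$ we get $2^{j\tilde s}a_j\le F(s+\epsilon_1)\,2^{-j(s+\epsilon_1-\tilde s)}\le F(s+\epsilon_1)\,2^{-j\epsilon_1/2}$, a geometrically decaying tail uniform in $\tilde s$; given $\delta>0$ one picks $J$ so that this bound is $<\delta$ for all $j\ge J$, and treats the finitely many indices $1\le j<J$ by continuity of $\tilde s\mapsto 2^{j\tilde s}a_j$, concluding $\limsup_{\tilde s\downarrow s}F(\tilde s)\le\max\{F(s),\delta\}$ and then letting $\delta\to 0$. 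Together with $F(s+)\ge F(s)$ this gives $F(s+)=F(s)$, i.e. the third displayed limit.

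There is no genuine obstacle here: the whole argument is elementary, resting only on monotonicity, monotone and dominated convergence, and one tail splitting. If one insists on a delicate point, it is the $q=\infty$ right‑continuity, where the strict gap $\epsilon_1>0$ is essential to produce a decaying tail bound — right‑continuity at $s$ can genuinely fail if only $\|f\|_{B^s_{\infty,\infty}}<\infty$ is assumed.
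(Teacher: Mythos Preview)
Your proof is correct and follows essentially the same route as the paper: monotone convergence for the left limit (with the $q=\infty$ case handled by the sup-of-sups identity) and dominated convergence for the right limit, with the $\epsilon_1$ gap providing the majorant. You have simply written out in more detail what the paper leaves as a one-line observation, in particular the $q=\infty$ right-continuity tail argument.
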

\begin{rem*}
This simple proposition explains the folklore fact that the limit $\alpha \to 1$ for H\"older $C^{\alpha}$, $0<\alpha<1$, is $B^1_{\infty,\infty}$;
and the limit $\alpha \to 0$ for H\"older $C^{1,\alpha}$, $0<\alpha<1$ is $B^1_{\infty,\infty}$.
\end{rem*}
\begin{proof}
If $q<\infty$, one can just use Monotone Convergence to obtain
\begin{align*}
\lim_{\substack{ \tilde s \to s \\ \tilde s <s}} (2^{j\tilde s} \| P_j f \|_p)_{l_j^q(j\ge 1)}
= (2^{js} \| P_j f \|_p)_{l_j^q(j\ge 1)}.
\end{align*}
Also for $q<\infty$, by using Dominated convergence, we have
\begin{align*}
\lim_{\substack{ \tilde s \to s \\ s<\tilde s<s+\epsilon_1}} (2^{j\tilde s} \| P_j f \|_p)_{l_j^q(j\ge 1)}
= (2^{js} \| P_j f \|_p)_{l_j^q(j\ge 1)}.
\end{align*}
If $q=\infty$, one observes that
\begin{align*}
&\sup_{\tilde s<s} \sup_{j\ge 1} 2^{j\tilde s } \| P_jf \|_p = \sup_{j\ge 1} 2^{js} \| P_jf \|_p;\\
&\lim_{\substack{\tilde s \to s \\ s<\tilde s<s+\epsilon_1}} \sup_{j\ge 1} 2^{j\tilde s} \|P_j f\|_p
= \sup_{j\ge 1} 2^{js} \| P_j f \|_p.
\end{align*}

\end{proof}

We will use the following characterisation for the $\dot C^{\alpha}$ space. Recall that for
$0<\alpha<1$,
\begin{align*}
\|f \|_{\dot C^{\alpha}} := \sup_{x\ne y} 
\frac{|f(x)-f(y)|} { |x-y|^{\alpha}},
\end{align*}
and
\begin{align*}
\|f \|_{C^{\alpha}} = \|f\|_{\infty} + \|f \|_{\dot C^{\alpha}}.
\end{align*}

\begin{lem}  \label{lem2.2}
Let $0<\alpha<1$. Then for any $f$ with $\|f \|_{ \dot C^{\alpha}(\mathbb R^n)} <\infty$, we have
\begin{align*}
\|f\|_{\dot C^{\alpha}}& \sim \|f\|_{\dot B^{\alpha}_{\infty,\infty}}.
\end{align*}
\end{lem}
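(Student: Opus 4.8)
The plan is to prove the equivalence $\|f\|_{\dot C^\alpha}\sim\|f\|_{\dot B^\alpha_{\infty,\infty}}$ for $0<\alpha<1$ by establishing the two inequalities separately using the Littlewood--Paley decomposition. First I would control the Besov norm by the H\"older norm. Fix $j\in\mathbb Z$ and write $P_jf = K_j * f$ where $K_j(x)=2^{jn}K(2^jx)$ and $\widehat K=\phi_c$; since $\phi_c$ vanishes in a neighbourhood of the origin, $\int K_j=0$, so we may subtract a constant and write
\begin{align*}
(P_jf)(x) = \int K_j(y)\bigl(f(x-y)-f(x)\bigr)\,dy.
\end{align*}
Taking absolute values, using $|f(x-y)-f(x)|\le \|f\|_{\dot C^\alpha}|y|^\alpha$ and the rapid decay $|K_j(y)|\lesssim_N 2^{jn}\langle 2^j y\rangle^{-N}$, a change of variables gives $\|P_jf\|_\infty\lesssim 2^{-j\alpha}\|f\|_{\dot C^\alpha}\int |K(z)||z|^\alpha\,dz\lesssim 2^{-j\alpha}\|f\|_{\dot C^\alpha}$, so that $\sup_j 2^{j\alpha}\|P_jf\|_\infty\lesssim\|f\|_{\dot C^\alpha}$.

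For the reverse inequality I would reconstruct $f$ (modulo a polynomial, which is harmless for the homogeneous seminorm) from its pieces as $f=\sum_{j\in\mathbb Z}P_jf$ and estimate the difference $f(x)-f(y)$ by splitting the sum at the scale $2^{-j}\sim|x-y|$, say at $j_0$ with $2^{-j_0}\le|x-y|<2^{-j_0+1}$. For the low frequencies $j\le j_0$ I would use the mean value theorem together with the Bernstein inequality $\|\nabla P_jf\|_\infty\lesssim 2^j\|P_jf\|_\infty\lesssim 2^{j(1-\alpha)}\|f\|_{\dot B^\alpha_{\infty,\infty}}$, so that $|P_jf(x)-P_jf(y)|\lesssim 2^j|x-y|\cdot 2^{-j\alpha}\|f\|_{\dot B^\alpha_{\infty,\infty}}$, and summing the geometric series in $j\le j_0$ (which converges since $1-\alpha>0$) yields a bound $\lesssim |x-y|^\alpha\|f\|_{\dot B^\alpha_{\infty,\infty}}$. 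For the high frequencies $j>j_0$ I would instead use the crude bound $|P_jf(x)-P_jf(y)|\le 2\|P_jf\|_\infty\lesssim 2^{-j\alpha}\|f\|_{\dot B^\alpha_{\infty,\infty}}$ and sum the geometric series in $j>j_0$ (which converges since $\alpha>0$), again getting $\lesssim 2^{-j_0\alpha}\|f\|_{\dot B^\alpha_{\infty,\infty}}\lesssim |x-y|^\alpha\|f\|_{\dot B^\alpha_{\infty,\infty}}$. Adding the two contributions gives $|f(x)-f(y)|\lesssim |x-y|^\alpha\|f\|_{\dot B^\alpha_{\infty,\infty}}$, hence $\|f\|_{\dot C^\alpha}\lesssim\|f\|_{\dot B^\alpha_{\infty,\infty}}$.

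The main technical obstacle is the convergence and meaning of the low-frequency sum $\sum_{j\le j_0}P_jf$: for a general tempered distribution with finite $\dot B^\alpha_{\infty,\infty}$ seminorm, $P_{\le 0}f$ need not be bounded, and the homogeneous Besov space is only defined modulo polynomials. The clean way around this is to observe that the \emph{differences} $P_jf(x)-P_jf(y)$ are what enter, and the gradient estimate makes $\sum_{j\le j_0}\bigl(P_jf(x)-P_jf(y)\bigr)$ absolutely convergent with the stated bound, uniformly in $j_0$; one then checks that $f$ minus this convergent sum of differences is the constant ambiguity, which is precisely the polynomial (degree $0$ here) quotient in the definition of $\dot C^\alpha$ and $\dot B^\alpha_{\infty,\infty}$. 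Since the lemma is applied in this paper only to functions that are already known a priori to satisfy $\|f\|_{\dot C^\alpha}<\infty$ (as stated in the hypothesis), one may alternatively sidestep the distributional subtlety entirely: the first inequality is then unconditional, and for the second one works directly with the given H\"older-continuous representative. I would present the argument in this second, streamlined form, relegating the general reconstruction remark to a parenthetical comment.
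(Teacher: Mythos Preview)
Your proposal is correct and follows essentially the same route as the paper's proof: the mean-zero property of the Littlewood--Paley kernel gives $\|f\|_{\dot B^\alpha_{\infty,\infty}}\lesssim\|f\|_{\dot C^\alpha}$, and for the reverse one splits $\sum_j(P_jf(x)-P_jf(y))$ at the scale $2^{J_0}\sim|x-y|^{-1}$, using Bernstein/gradient for $j\le J_0$ and the trivial $L^\infty$ bound for $j>J_0$. The only cosmetic difference is that the paper makes the convergence of the low-frequency tail explicit by noting (under the standing hypothesis $\|f\|_{\dot C^\alpha}<\infty$, hence $\|P_jf\|_\infty\lesssim 2^{-j\alpha}$) that $\|\nabla P_{\le -J}f\|_\infty\lesssim 2^{-J(1-\alpha)}\to0$, which yields the exact pointwise identity $f(x)-f(y)=\sum_{j\in\mathbb Z}(P_jf(x)-P_jf(y))$ without any polynomial bookkeeping---precisely the ``streamlined form'' you describe in your last paragraph.
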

\begin{rem*}
Similarly for $\alpha=1$ and $f \in \mathcal S(\mathbb R^n)$, one has the equivalence
\begin{align*}
\sup_{h\ne 0, x\in \mathbb R^n}
\frac{|f(x+h)+f(x-h)-2f(x)|} {|h|} \sim \|f \|_{\dot B^1_{\infty,\infty}}.
\end{align*}
\end{rem*}
\begin{rem*}
It follows easily that for $0<\alpha<1$, and any $f\in \mathcal S(\mathbb R^n)$,  we have
\begin{align*}
\| \nabla f \|_{\dot C^{\alpha}} \sim \| \nabla f \|_{\dot B^{\alpha}_{\infty,\infty}}
\sim \| f \|_{\dot B^{1+\alpha}_{\infty,\infty}}.
\end{align*}
\end{rem*}
\begin{proof}
Easy to check that $\|f \|_{\dot B^{\alpha}_{\infty,\infty}} \lesssim \| f\|_{\dot C^{\alpha}}$. To show the reverse inequality,
one can fix $x\ne y$ and note that with the assumption $\|f\|_{\dot C^{\alpha}}<\infty$ one has
\begin{align*}
\lim_{J\to \infty}|(P_{\le -J} f)(x) - (P_{\le -J} f)(y)| \lesssim \lim_{J\to \infty}  \| \nabla P_{\le -J} f \|_{\infty} 
\cdot |x-y| =0.
\end{align*}
This implies the point-wise identity:
\begin{align*}
f(x)- f(y) = \sum_{j\in \mathbb Z} ( (P_j f)(x) - (P_j f)(y) ).
\end{align*}
 Thus
\begin{align*}
|f(x) -f(y)| &\le \sum_{j\le J_0} \| \nabla P_j f \|_{\infty} \cdot |x-y| + \sum_{j>J_0} 2\cdot \|P_j f\|_{\infty} \notag \\
& \lesssim 2^{J_0(1-\alpha)} |x-y| \| f \|_{\dot B^{\alpha}_{\infty,\infty}} + 2^{-J_0\alpha} \| f \|_{\dot B^{\alpha}_{\infty,\infty}}.
\end{align*}
Choosing $2^{J_0} \sim |x-y|^{-1}$ then yields the result.
\end{proof}

\begin{lem}
Let $0<\alpha<1$.
Suppose $f \in \mathcal S^{\prime}(\mathbb R^n)$ satisfies
\begin{align*}
\|f \|_{\dot B^{\alpha}_{\infty,\infty}} = \sup_{j}  ( \| P_j f \|_{\infty} \cdot 2^{j\alpha} ) <\infty.
\end{align*}
Then
\begin{align*}
f= \tilde f +p,
\end{align*}
where $p$ is a polynomial, and $\tilde f$ is a continuous function satisfying
\begin{align*}
\| \tilde f \|_{\dot C^{\alpha}} \sim \| f \|_{\dot B^{\alpha}_{\infty,\infty}}.
\end{align*}
\end{lem}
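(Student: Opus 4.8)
The plan is to reconstruct $\tilde f$ from the Littlewood-Paley pieces of $f$ by summing the high-frequency blocks directly and handling the low-frequency blocks with a careful normalisation. Concretely, set $\tilde f := \sum_{j\ge 0} P_j f + \sum_{j<0}\bigl(P_j f - Q_j(x)\bigr)$, where $Q_j$ is a suitable Taylor polynomial of $P_j f$ at the origin of degree $\lfloor \alpha\rfloor = 0$; since $0<\alpha<1$ we only subtract the constant $Q_j = (P_j f)(0)$. The point of the subtraction is that the raw series $\sum_{j<0} P_j f$ need not converge as a function (only modulo polynomials), but after subtracting the value at $0$ one gains a factor from the mean value theorem: $|(P_j f)(x) - (P_j f)(0)| \lesssim \|\nabla P_j f\|_\infty |x| \lesssim 2^{j(1-\alpha)}|x| \,\|f\|_{\dot B^\alpha_{\infty,\infty}}$, which is summable over $j<0$ for each fixed $x$ because $1-\alpha>0$. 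Thus $\tilde f$ is a well-defined continuous function, and $f - \tilde f = \sum_{j<0} Q_j$ is a (formal) constant — more precisely one checks that the difference is annihilated by $P_k$ for every $k$, hence is a polynomial $p$; in the Hölder range $0<\alpha<1$ this polynomial is actually a constant, but stating it as a polynomial covers the general normalisation and matches the claim.

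Next I would verify $\|\tilde f\|_{\dot C^\alpha} \lesssim \|f\|_{\dot B^\alpha_{\infty,\infty}}$. This is the standard two-regime estimate already executed in the proof of Lemma \ref{lem2.2}: for fixed $x\ne y$, split the sum $\tilde f(x) - \tilde f(y) = \sum_{j\in\mathbb Z}\bigl((P_j f)(x) - (P_j f)(y)\bigr)$ (the constants $Q_j$ cancel in differences) at a threshold $2^{J_0}\sim |x-y|^{-1}$, bounding low-frequency differences by $\|\nabla P_j f\|_\infty |x-y| \lesssim 2^{j(1-\alpha)}|x-y|\,\|f\|_{\dot B^\alpha_{\infty,\infty}}$ and high-frequency differences by $2\|P_j f\|_\infty \lesssim 2^{-j\alpha}\|f\|_{\dot B^\alpha_{\infty,\infty}}$; summing the two geometric series gives $\lesssim |x-y|^\alpha \|f\|_{\dot B^\alpha_{\infty,\infty}}$. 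For the reverse inequality $\|f\|_{\dot B^\alpha_{\infty,\infty}} \lesssim \|\tilde f\|_{\dot C^\alpha}$, note that $P_k f = P_k \tilde f$ for all $k$ (since $P_k$ kills the polynomial $p$), so $\|P_k f\|_\infty = \|P_k \tilde f\|_\infty \lesssim 2^{-k\alpha}\|\tilde f\|_{\dot C^\alpha}$ by the easy direction already recorded in Lemma \ref{lem2.2}; taking the supremum over $k$ gives the claim.

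The one genuinely delicate point — the main obstacle — is justifying that $f - \tilde f$ is a polynomial as an identity of \emph{tempered distributions}, not merely formally. The issue is that $f$ is only assumed to be a tempered distribution with $\sup_j 2^{j\alpha}\|P_j f\|_\infty < \infty$, so the low-frequency behaviour is uncontrolled a priori; one must argue that $\sum_{j} P_j f$ reconstructs $f$ up to a polynomial. The clean way is: the high-frequency part $\sum_{j\ge 0} P_j f$ converges in $\mathcal S'$ (indeed in $L^\infty$ after the above estimates) and equals $P_{\ge 0} f$ in the sense of distributions; for the low part, $\widehat{P_j f}$ is supported in $|\xi|\sim 2^j$ and one shows that $P_{<0} f - \sum_{j<0}(P_j f - Q_j)$, which has Fourier transform supported at the origin, is therefore a polynomial, and a degree count using $\|P_j f\|_\infty \lesssim 2^{-j\alpha}$ with $\alpha>0$ forces the degree to be $0$ in our range. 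I would phrase this via the standard fact that a tempered distribution whose Fourier transform is supported at $\{0\}$ is a polynomial, combined with the growth bound to pin down the degree. Everything else is bookkeeping with the estimates already present in the excerpt.
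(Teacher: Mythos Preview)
Your proof is correct and follows the same underlying idea as the paper's---build $\tilde f$ from the Littlewood--Paley pieces after subtracting constants, then verify $P_k\tilde f = P_k f$ for all $k$ so that $f-\tilde f$ has Fourier support at the origin and is therefore a polynomial. The execution differs: you split the sum at $j=0$, subtract $(P_j f)(0)$ only from the low-frequency pieces, and obtain a series that converges directly (locally uniformly); the paper instead forms the symmetric partial sums $f_J=\sum_{|j|\le J} P_j f$, subtracts the single constant $f_J(0)$, and extracts a locally uniform limit via Arzel\`a--Ascoli (needed because the constants $f_J(0)$ need not converge as $J\to\infty$). Your route is more constructive and avoids the compactness step; the paper's is more symmetric and slightly shorter once the compactness lemma is invoked. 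Both feed into the identical H\"older estimate from Lemma~\ref{lem2.2}.

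One small correction: your final remark that the growth bound ``forces the degree to be $0$'' is not right, and you do not need it. Consider $f(x)=x_1^{100}$: then $P_j f=0$ for every $j$, so $\|f\|_{\dot B^{\alpha}_{\infty,\infty}}=0$, $\tilde f=0$, and $p=x_1^{100}$ has degree $100$. The lemma only asserts that $p$ is \emph{some} polynomial, which follows immediately from $\widehat{f-\tilde f}$ being supported at the origin; the degree claim should simply be dropped.
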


\begin{proof}
Denote $f_J= \sum_{-J \le j \le J} P_j f$. Clearly $\|f_J\|_{\dot C^{\alpha}} \lesssim \|f \|_{\dot B^{\alpha}_{\infty,\infty}}$. Set
$$\tilde f_J(x)= f_J(x)- f_J(0).$$
Note that $|\tilde f_J(x) | \lesssim 1+|x|^{\alpha}$.  By Arzel\`a-Ascoli one can extract a subsequence $\tilde f_{J_k} $ converging
locally uniformly to a limit $\tilde f$.  Clearly $\| \tilde f\|_{\dot C^{\alpha}} \lesssim \|f \|_{\dot B^{\alpha}_{\infty,\infty}}$. By the previous
lemma we have $\| \tilde f \|_{\dot C^{\alpha}} \sim \| \tilde f \|_{\dot B^{\alpha}_{\infty,\infty}}$.
 Fix any $j_0 \in \mathbb Z$. Clearly if $J$ is large we have $P_{j_0} 
\tilde f_J= P_{j_0} f$. By Lebesgue Dominated Convergence and sending $J_k$ to infinity we obtain $P_{j_0} \tilde f = P_{j_0} f$
for any $j_0 \in \mathbb Z$.  Thus $\tilde f$ and $f$ differ at most by a polynomial.

\end{proof}

The following corollary records the usual fact that for $0<\alpha<1$ and $f\in \mathcal S^{\prime}(\mathbb R^n)$,
one has the equivalence $ \| f \|_{B^{\alpha}_{\infty,\infty}} \sim \| f \|_{C^{\alpha}}$.
\begin{cor}
Let $0<\alpha<1$. 
Suppose $f$ is continuous and bounded on $\mathbb R^n$. Then
\begin{align*}
\| f \|_{\dot B^{\alpha}_{\infty,\infty}} \sim \| f\|_{\dot C^{\alpha}}.
\end{align*}
\end{cor}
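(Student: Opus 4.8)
The plan is to obtain the corollary directly from the two preceding lemmas, which already contain all the substance. One half of the equivalence, $\| f \|_{\dot B^{\alpha}_{\infty,\infty}} \lesssim \| f \|_{\dot C^{\alpha}}$, was recorded at the beginning of the proof of Lemma~\ref{lem2.2} and holds for \emph{every} $f$ with finite right-hand side; it comes from writing $P_j f(x)$ as an average of the differences $f(x-y)-f(x)$ against a mean-zero kernel, using $|f(x-y)-f(x)| \le \|f\|_{\dot C^{\alpha}}\,|y|^{\alpha}$, and rescaling in $j$. Hence only the reverse bound $\| f \|_{\dot C^{\alpha}} \lesssim \| f \|_{\dot B^{\alpha}_{\infty,\infty}}$ needs an argument, and this is precisely where the hypothesis that $f$ is bounded and continuous will enter.

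I would first assume $\| f \|_{\dot B^{\alpha}_{\infty,\infty}} < \infty$ (otherwise there is nothing to prove) and apply the previous lemma to decompose $f = \tilde f + p$, with $p$ a polynomial and $\tilde f$ continuous satisfying $\| \tilde f \|_{\dot C^{\alpha}} \sim \| f \|_{\dot B^{\alpha}_{\infty,\infty}}$. Finiteness of $\| \tilde f \|_{\dot C^{\alpha}}$ gives the a priori growth bound $|\tilde f(x)| \le |\tilde f(0)| + \| \tilde f \|_{\dot C^{\alpha}}\,|x|^{\alpha}$ for all $x \in \mathbb R^n$.

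The only real step is to show $p$ is constant. Since $f$ is bounded, $p = f - \tilde f$ is a polynomial with $|p(x)| \lesssim 1 + |x|^{\alpha}$, and because $0 < \alpha < 1$ a polynomial with such sublinear growth must be constant. Therefore $f$ and $\tilde f$ differ by a constant, which does not change the $\dot C^{\alpha}$ seminorm, so $\| f \|_{\dot C^{\alpha}} = \| \tilde f \|_{\dot C^{\alpha}} \sim \| f \|_{\dot B^{\alpha}_{\infty,\infty}}$; combined with the first inequality this gives the equivalence (and in particular shows the two seminorms are simultaneously finite or infinite). There is no genuine obstacle in this argument — the single delicate point is the polynomial-removal step, which is exactly what distinguishes this corollary (bounded continuous $f$) from the more general preceding lemma ($f \in \mathcal S'$), since for a generic tempered distribution one can only conclude the identity up to a polynomial.
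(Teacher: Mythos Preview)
Your proof is correct and follows essentially the same route as the paper: invoke the preceding lemma to write $f=\tilde f+p$ with $\|\tilde f\|_{\dot C^{\alpha}}\sim\|f\|_{\dot B^{\alpha}_{\infty,\infty}}$, then use boundedness of $f$ together with the sublinear growth $|\tilde f(x)|\lesssim 1+|x|^{\alpha}$ to force the polynomial $p$ to be constant. The paper phrases the polynomial step via the limits of $f(x)/|x|$ and $\tilde f(x)/|x|$ as $|x|\to\infty$, but this is the same observation as yours.
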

\begin{proof}
If $\|f \|_{\dot C^{\alpha}} <\infty$, clearly then $\| f\|_{\dot B^{\alpha}_{\infty,\infty}} \lesssim \| f \|_{\dot C^{\alpha}}$. Conversely if
$\| f \|_{\dot B^{\alpha}_{\infty,\infty}}<\infty$, then by the previous lemma, we have $f =\tilde f+ p$, where $p$ is a polynomial, and
$\| \tilde f\|_{\dot C^{\alpha}} \sim \| f\|_{\dot B^{\alpha}_{\infty,\infty}}$. Since $f$ is continuous and bounded,  by considering the
limit of the ratios $f(x)/|x|$ and $\tilde f (x)/|x|$ as $|x|\to \infty$, it follows easily that
$p$ must be a constant.  Thus $f = \tilde f+ \operatorname{const}$ from which the desired result follows.
\end{proof}
\begin{rem*}
Alternatively one can use the condition $\| f \|_{\infty}<\infty$ to obtain that for any $x\ne y$, 
\begin{align*}
\lim_{J\to \infty}  |P_{\le -J} f (x)
-P_{\le -J} f (y)|  &\lesssim \lim_{J\to \infty}  \| \nabla P_{\le -J} f\|_{\infty} |x-y|  \notag \\
&\lesssim \lim_{J\to \infty} 2^{-J} \| f \|_{\infty} |x-y| =0.
\end{align*}
Repeating the proof in Lemma \ref{lem2.2} then yields easily $\|f \|_{\dot C^{\alpha}} \lesssim \|f \|_{\dot B^{\alpha}_{\infty,\infty}}$.
\end{rem*}

We also need the following maximal function estimates.

\begin{lem}[Fefferman-Stein inequality, \cite{FS}] \label{lem_FS}
 Let $f=(f_j)_{j=1}^{\infty}$ be a sequence of functions in $\mathbb R^n$. Let $1<q,r<\infty$. There exists
 a positive constant $A_{r,q,n}>0$ such that
 \begin{align*}
  \biggl\| \bigl( \sum_{j=1}^{\infty}  |\mathcal M_1 f_j|^r \bigr)^{\frac 1r} \biggr\|_q \le A_{r,q,n}
  \biggl\| \bigl( \sum_{j=1}^{\infty}  | f_j|^r \bigr)^{\frac 1r} \biggr\|_q.
 \end{align*}
Here $\mathcal M_1 g$ is the maximal function defined by (here $|Q|$ denotes $n$-dimensional Lebesgue measure of $Q$):
$
 (\mathcal M_1 g)(x) = \sup_{Q} \frac 1 {|Q|}\int_{Q} |g(y)|dy,
$
where the $\operatorname{sup}$ is taken over all cubes $Q$ centered at $x$.
\end{lem}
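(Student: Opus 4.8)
The plan is to reduce the vector-valued bound to the scalar Hardy--Littlewood maximal theorem (boundedness of $\mathcal M_1$ on $L^p(\mathbb R^n)$ for every $1<p<\infty$), combining a Calderón--Zygmund decomposition, Marcinkiewicz interpolation, and a duality argument; this is in essence the scheme of \cite{FS}. I would treat three regimes: $q=r$, $q>r$, and $q<r$.

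The diagonal case $q=r$ is immediate, since
\begin{align*}
\Big\|\big(\sum_j|\mathcal M_1 f_j|^r\big)^{1/r}\Big\|_r^r=\sum_j\|\mathcal M_1 f_j\|_r^r\lesssim_r\sum_j\|f_j\|_r^r=\Big\|\big(\sum_j|f_j|^r\big)^{1/r}\Big\|_r^r .
\end{align*}
For $q>r$ I would dualize in $L^{q/r}$, whose conjugate exponent is finite: writing the left-hand side as $\big\|\sum_j(\mathcal M_1 f_j)^r\big\|_{q/r}$ and pairing with a nonnegative $u$ of unit $L^{(q/r)'}$-norm, the estimate reduces to the scalar weighted inequality $\int(\mathcal M_1 h)^r\,u\lesssim_r\int|h|^r\,\mathcal M_1 u$ (valid for $r>1$), after which H\"older's inequality and the $L^{(q/r)'}$-boundedness of $\mathcal M_1$ applied to $u$ finish the argument. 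The weighted inequality itself follows by interpolating, with a change of measure, the trivial $L^\infty(\mathcal M_1 u\,dx)\to L^\infty(u\,dx)$ bound with the weak-type bound $u(\{\mathcal M_1 h>\lambda\})\lesssim\lambda^{-1}\int|h|\,\mathcal M_1 u$, which is a Vitali covering argument.

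The substantive case is $q<r$. Here I would first prove the vector-valued weak-$(1,1)$ estimate
\begin{align*}
\Big\|\big(\sum_j|\mathcal M_1 f_j|^r\big)^{1/r}\Big\|_{L^{1,\infty}}\lesssim_r\Big\|\big(\sum_j|f_j|^r\big)^{1/r}\Big\|_{1},
\end{align*}
and then interpolate (Marcinkiewicz, for $\ell^r$-valued functions) between it and the diagonal $L^r(\ell^r)$-bound to obtain the claim for $1<q<r$. To prove the weak-$(1,1)$ bound I would run a Calderón--Zygmund decomposition of $F=(\sum_j|f_j|^r)^{1/r}$ at height $\lambda$, producing disjoint cubes $\{Q_k\}$, and split $f_j=g_j+b_j$, where $g_j=f_j$ off $\bigcup_kQ_k$ and $g_j=\frac{1}{|Q_k|}\int_{Q_k}f_j$ on $Q_k$. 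Minkowski's inequality for integral averages, $\big(\sum_j|\frac{1}{|Q_k|}\int_{Q_k}f_j|^r\big)^{1/r}\le\frac{1}{|Q_k|}\int_{Q_k}F$, shows that the $\ell^r$-norm of the good part is $\lesssim\lambda$ pointwise and $\lesssim\|F\|_1$ in $L^1$, so the diagonal $L^r(\ell^r)$-bound together with Chebyshev handles $(\mathcal M_1 g_j)_j$. For the bad part one restricts to the complement of a fixed dilate $\Omega^*=\bigcup_k cQ_k$ of the exceptional set (with $c=c(n)$ large): for a cube $Q$ centred at a point of $(\Omega^*)^c$, every $Q_k$ meeting $Q$ satisfies $\ell(Q_k)\lesssim\ell(Q)$ and is either contained in $Q$, where the mean-zero property $\int_{Q_k}b_j=0$ kills its contribution, or straddles $\partial Q$; the straddling cubes have total volume $\lesssim|Q|$ by a dyadic boundary-layer estimate, and a second use of Minkowski's inequality (in the sum over $k$) then yields $(\sum_j|\mathcal M_1 b_j|^r)^{1/r}\lesssim\lambda$ on $(\Omega^*)^c$. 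Since $|\Omega^*|\lesssim|\bigcup_kQ_k|\lesssim\lambda^{-1}\|F\|_1$, the weak-$(1,1)$ estimate follows.

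I expect the main obstacle to be exactly this bad-part estimate: $\mathcal M_1$ has no smoothing kernel, so the cancellation of the $b_j$ cannot be used through kernel regularity and must instead be extracted from the geometric fact that an averaging cube seen from outside $\Omega^*$ dominates every Calderón--Zygmund cube it meets, confining the uncancelled mass to a thin layer near $\partial Q$. Pushing the $\ell^r$-structure through both the good and the bad part via Minkowski's inequality, and keeping the interpolation endpoints matched, is the delicate bookkeeping; the remaining steps are routine applications of the scalar Hardy--Littlewood theorem.
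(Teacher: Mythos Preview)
The paper does not prove this lemma; it is quoted as a known result with a citation to Fefferman--Stein \cite{FS}. Your proposal follows essentially the original Fefferman--Stein scheme, and the diagonal case $q=r$ and the case $q>r$ (via the weighted inequality $\int(\mathcal M_1 h)^r u\lesssim_r\int|h|^r\mathcal M_1 u$) are handled correctly.

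There is, however, a genuine gap in your treatment of the bad part for $q<r$. The maximal function is $\mathcal M_1 b_j(x)=\sup_Q\frac{1}{|Q|}\int_Q|b_j(y)|\,dy$, with the absolute value \emph{inside} the integral, so the mean-zero property $\int_{Q_k}b_j=0$ does \emph{not} kill the contribution of a cube $Q_k\subset Q$: the relevant quantity is $\int_{Q_k}|b_j|$, which is typically nonzero. Moreover, even if you could bound $\bigl(\sum_j\bigl|\tfrac{1}{|Q|}\int_Q|b_j|\bigr|^r\bigr)^{1/r}$ for each fixed $Q$, this would not control $\bigl(\sum_j|\mathcal M_1 b_j|^r\bigr)^{1/r}$, because the supremum over $Q$ can be attained at a different cube for each $j$; the sup and the $\ell^r$-norm do not commute in the direction you need. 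The pointwise bound $\bigl(\sum_j|\mathcal M_1 b_j|^r\bigr)^{1/r}\lesssim\lambda$ on $(\Omega^*)^c$ is therefore not established (and is false in general).

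The correct argument (as in \cite{FS}) uses no cancellation at all. Set $B_j=\sum_k\bigl(\tfrac{1}{|Q_k|}\int_{Q_k}|b_j|\bigr)\chi_{Q_k}$. For $x\notin\Omega^*$, any cube $Q$ centred at $x$ that meets $Q_k$ has $\ell(Q)\gtrsim\ell(Q_k)$, hence $Q_k\subset CQ$, and this gives $\mathcal M_1 b_j(x)\lesssim\mathcal M_1 B_j(x)$. Minkowski's inequality yields $\bigl(\sum_j|B_j|^r\bigr)^{1/r}\lesssim\lambda$ pointwise on $\bigcup_k Q_k$, so $\bigl\|(\sum_j|B_j|^r)^{1/r}\bigr\|_r^r\lesssim\lambda^{r-1}\|F\|_1$. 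Apply the diagonal $L^r(\ell^r)$ bound to $(B_j)_j$ and then Chebyshev to get the weak-$(1,1)$ estimate for the bad part. The geometry you identified (averaging cubes seen from outside $\Omega^*$ dominate the CZ cubes they meet) is exactly what is used; the mean-zero property is irrelevant here.
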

\begin{rem}
Of course  the maximal operator $\mathcal M_1$ can be replaced by the usual uncentered maximal operator $\mathcal M$
on balls:
\begin{align*}
 (\mathcal M g)(x) = \sup_{B} \frac 1 {|B|} \int_{B} |g(y)|dy,
\end{align*}
where the $\operatorname{sup}$ is taken over all balls $B$ containing the point $x$.
\end{rem}

As an immediate consequence of Lemma \ref{lem_FS}, we have the following estimate.

\begin{cor}
Let $\tilde P_{j}$ be a family of Littlewood-Paley type projection operators adapted to frequency 
$|\xi|\sim 2^j$,
and obey the bound (often this is easy to check)
\begin{align*}
\sup_{j\in {\Z}} |(\tilde P_{j} g )(x)| \le \operatorname{const}\cdot (\mathcal M g)(x), \qquad \forall\, g.
\end{align*}
Then we have
\begin{align}
  \biggl\| \bigl( \sum_{j}  |\tilde P_{j} f_j|^r \bigr)^{\frac 1r} \biggr\|_q \lesssim
  \biggl\| \bigl( \sum_{j}  | f_j|^r \bigr)^{\frac 1r} \biggr\|_q,\quad \forall \ 1<q,r<\infty.\label{cor:fs}
 \end{align}
 \end{cor}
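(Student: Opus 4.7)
The plan is to reduce the claimed inequality directly to the Fefferman--Stein vector-valued maximal inequality (Lemma \ref{lem_FS}) by exploiting the pointwise domination hypothesis on $\tilde P_j$. The key observation is that the hypothesis is uniform in $j$: for any function $g$,
\begin{align*}
\sup_{j \in \Z} |(\tilde P_j g)(x)| \le C \cdot (\mathcal M g)(x).
\end{align*}
Applied to $g = f_j$ for each fixed $j$, this yields the pointwise bound $|(\tilde P_j f_j)(x)| \le C \cdot (\mathcal M f_j)(x)$ at every $x$.

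With this pointwise control in hand, I would raise both sides to the $r$-th power and sum in $j$, obtaining
\begin{align*}
\Bigl(\sum_j |(\tilde P_j f_j)(x)|^r \Bigr)^{1/r} \le C \Bigl(\sum_j |(\mathcal M f_j)(x)|^r \Bigr)^{1/r}.
\end{align*}
Taking the $L^q$ norm in $x$ and invoking Lemma \ref{lem_FS} (Fefferman--Stein), which is applicable since $1 < q, r < \infty$, gives the desired bound
\begin{align*}
\Bigl\|\Bigl(\sum_j |\tilde P_j f_j|^r \Bigr)^{1/r}\Bigr\|_q \lesssim \Bigl\|\Bigl(\sum_j |\mathcal M f_j|^r \Bigr)^{1/r}\Bigr\|_q \lesssim \Bigl\|\Bigl(\sum_j |f_j|^r \Bigr)^{1/r}\Bigr\|_q.
\end{align*}

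There is essentially no obstacle here; the only mild point to verify is that the hypothesis $\sup_{j} |\tilde P_j g| \lesssim \mathcal M g$ is routinely satisfied for standard Littlewood--Paley-type operators, since their convolution kernels are uniformly (in $j$) dominated by an $L^1$-normalised radially decreasing majorant, and convolution with such a majorant is controlled by $\mathcal M$. Thus the corollary follows immediately by combining the pointwise maximal domination of $\tilde P_j$ with the vector-valued Fefferman--Stein inequality.
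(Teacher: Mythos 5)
Your proof is correct and is exactly the argument the paper intends: apply the pointwise hypothesis with $g = f_j$ to dominate $\tilde P_j f_j$ by $\mathcal M f_j$, then invoke the Fefferman--Stein inequality (Lemma~\ref{lem_FS}). The paper presents the corollary as an ``immediate consequence'' of Lemma~\ref{lem_FS} without writing out the details, and your write-up supplies precisely those details.
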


Several acceptable choices of $\tilde P_j$ include, for example, the fattened projection: $\tilde P_j=P_{j-1}+P_j+P_{j+1}$ and the projections involving derivatives: $\tilde P_j=2^{-js} |\nabla|^s P_j$, $s>0$. In later sections, we shall often use this inequality (mostly in the
case $r=2$) without explicit mentioning. Note that
alternatively one can prove \eqref{cor:fs} by using the usual
vector-valued CZ operator theory.

Note that in \eqref{cor:fs}, $q\in (1,\infty)$. Next we shall discuss an analogue of this estimate when $q\le 1$. Such estimate will be used in dealing with estimates in Hardy spaces. And we start by a short review of Hardy spaces.\footnote{To keep it simple, we only discuss the estimates in Hardy space $\mathcal H^1$, 
the extension to general Hardy space $\mathcal H^p$ can be done without much effort.}

 We recall the definition of Hardy space $\mathcal
H^1$:
\begin{align*}
\mathcal H^1 (\mathbb R^n)= \Bigl\{ f \in L^1(\mathbb R^n):\;\;
\mathcal R_j f \in L^1, \quad \forall\, j=1,\cdots, n \Bigr\},
\end{align*}
where $\mathcal R_j$ are the Riesz transforms ($\hat R_j(\xi) = -i
\xi_j /|\xi|$) on $\mathbb R^n$ (for $n=1$ it is just the Hilbert
transform) with the norm
\begin{align*}
\| f \|_{\mathcal H^1} = \|f\|_1 + \sum_{j=1}^n \| \mathcal R_j f
\|_1.
\end{align*}
We shall not need the atomic or maximal function characterization of
$\mathcal H^1$. On the other hand, it is well-known that (see e.g.
Chapter 6.4 on p37 of \cite{Grafakos2})
\begin{align*}
\|f \|_{\mathcal H^1} \sim \| \mathcal S f \|_1,
\end{align*}
where $\mathcal S f$ is the usual Littlewood-Paley square function
$\mathcal S f = (\sum_{j\in \mathbb Z} |P_{j} f |^2)^{\frac 12}$.
In view of this equivalence, we shall conveniently use the
definition
\begin{align} \label{def_Hardy}
\|f \|_{\mathcal H^1} = \| \mathcal S f \|_1
\end{align}
in this note without explicit mentioning.

\begin{lem} \label{max_1t}
Suppose $u\in \mathcal S(\mathbb R^n)$ with $\operatorname{supp}(\hat u) \subset \{\xi:\; |\xi|<t\}$ for some $t>0$.
Then for any $0<r<\infty$,
\begin{align*}
\sup_{z \in \mathbb R^n} \frac{|u(x-z)|}{(1+t|z|)^{\frac nr}}
\lesssim_{r,n} (\mathcal M(|u|^r)(x) )^{\frac 1r}.
\end{align*}
\end{lem}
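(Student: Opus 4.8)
The plan is to prove the pointwise domination $\sup_{z}|u(x-z)|/(1+t|z|)^{n/r}\lesssim (\mathcal M(|u|^r)(x))^{1/r}$ by a standard reproducing-kernel argument. By scaling we may assume $t=1$: indeed, replacing $u$ by $u_t(\cdot)=u(\cdot/t)$ reduces a general $t$ to the case $t=1$, and the maximal function scales compatibly. So assume $\operatorname{supp}(\hat u)\subset\{|\xi|<1\}$. Pick $\psi\in\mathcal S(\R^n)$ with $\hat\psi\equiv 1$ on $\{|\xi|\le 1\}$; then $u=u*\psi$, and more generally $u=u*\psi_{\le 0}$-type reproducing identities hold. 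I will actually want a slightly more flexible version: since $\hat u$ is supported in the unit ball, for any fixed point $x_0$ we have $u(x_0)=(u*\psi)(x_0)=\int u(x_0-y)\psi(y)\,dy$, and the rapid decay of $\psi$ is what drives everything.

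The main step: fix $x$ and $z$, and estimate $|u(x-z)|$. Writing $w=x-z$, we have
\begin{align*}
|u(w)|=\Bigl|\int_{\R^n} u(w-y)\psi(y)\,dy\Bigr|
\le \int_{\R^n}|u(w-y)|\,|\psi(y)|\,dy.
\end{align*}
Now change variables so the integral is centered at $x$: set $v=w-y=x-z-y$, i.e. $y=x-z-v$, so
\begin{align*}
|u(w)|\le \int_{\R^n}|u(v)|\,|\psi(x-z-v)|\,dv.
\end{align*}
Split the integral into dyadic shells $|x-v|\sim 2^k$ (and the core $|x-v|\lesssim 1$). On the shell where $|x-v|\sim R$, one has $\int_{|x-v|\sim R}|u(v)|\,dv\lesssim R^n\,\mathcal M(|u|)(x)$, and more to the point, by Hölder with exponent $r$ (using $r\ge 1$ first, then a separate remark for $r<1$), $\int_{|x-v|\le R}|u(v)|^r\,dv\lesssim R^n\,\mathcal M(|u|^r)(x)$. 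For the weight $|\psi(x-z-v)|$: on the shell $|x-v|\sim R$ we have $|x-z-v|\ge ||x-v|-|z||$, which is either comparable to $R$ (when $R\gg|z|$) or to $|z|$ region contributions, and in all cases the rapid decay of $\psi$, $|\psi(\zeta)|\lesssim_N (1+|\zeta|)^{-N}$, beats the polynomial shell volume. Carrying this through and optimizing, the worst terms are controlled by $(1+|z|)^{n/r}\,(\mathcal M(|u|^r)(x))^{1/r}$, which is exactly the claim after recalling $t=1$.

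I expect the main obstacle to be the case $0<r<1$, where Hölder's inequality is not directly available to pass from $\int|u|$ on a ball to $(\int|u|^r)^{1/r}$ times the volume. The standard fix is a \emph{bootstrapping / self-improvement} argument: first prove the estimate with some large auxiliary exponent (say $r=1$, or any $r\ge 1$), then use the pointwise bound $|u(w)|\le C\,\sup_z \frac{|u(x-z)|}{(1+|x-w|)^{n}}\cdot(1+|x-w|)^n$ inside the integrand $\int |u(v)|^r(\cdots)$ — write $|u(v)|^r=|u(v)|^{r-\delta}|u(v)|^{\delta}$ for small $\delta$, bound the low power by a fractional power of $\sup_z|u(x-z)|/(1+t|z|)^{n/r}$ (which is finite since $u$ is Schwartz), absorb it into the left side, and close. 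This is a routine but slightly fiddly argument; the decay of $\psi$ and the harmless polynomial loss $(1+t|z|)^{n/r}$ give exactly the room needed. Everything else — the reproducing identity, dyadic decomposition, and summation of a geometric-type series against the Schwartz tail — is mechanical.
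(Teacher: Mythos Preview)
Your proposal is correct and follows essentially the same route as the paper: rescale to $t=1$, use the reproducing identity $u=u*\psi$, treat $r\ge 1$ via H\"older against the Schwartz kernel together with the maximal function, and treat $0<r<1$ by the bootstrap/absorption you describe. The paper's execution is marginally cleaner --- it applies H\"older directly to the splitting $|\psi|=|\psi|^{1/r}|\psi|^{1/r'}$ (reducing matters to the single estimate $\int|\psi(z-y)||f(y)|\,dy\lesssim (1+|z|)^n(\mathcal Mf)(0)$), and for $r<1$ it writes $|u(y)|=|u(y)|^r\cdot|u(y)|^{1-r}$ rather than splitting $|u|^r$ --- but these are cosmetic differences from your dyadic-shell variant.
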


\begin{rem*}
To understand the meaning of the lemma one can take $x=0$ and $t=1$. Thanks to
 frequency localization, for given $z$
the value $u(z)$ is mainly controlled by the $L^r$ norm of $|u|$ in a ball of size $O(|z|)$ centered at the origin.
The factor $|z|^{\frac nr}$ then provides the needed normalization for the maximal function.
\end{rem*}

\begin{rem*}
See Lemma 2.3 of \cite{L18} for a proof under a much weaker condition that $u$ is a tempered
distribution (i.e. one can replace $\mathcal S(\mathbb R^n)$ by 
$\mathcal S^{\prime} (\mathbb R^n)$).
\end{rem*}

\begin{proof}
By scaling we may assume $t=1$. By using translation we only need to consider the case $x=0$.
Assume first $1\le r<\infty$.  Since $u=P_{<2} u$, we have
\begin{align*}
u(z) = \int \psi(z-y) u(y)dy,
\end{align*}
where $\psi$ is the Schwartz function corresponding to $P_{<2}$. Clearly then
\begin{align*}
|u(z)| &\le \int |\psi(z-y)|^{\frac 1r} |u(y)| |\psi(z-y)|^{1-\frac 1r} dy \notag \\
& \lesssim \bigl( \int |\psi(z-y)| |u(y)|^r dy \bigr)^{\frac 1r} \notag \\
& \lesssim (1+|z|)^{\frac nr} ( \mathcal M (|u|^r)(0) )^{\frac 1r},
\end{align*}
where in the last step we used the simple inequality
\begin{align*}
\int |\psi(z-y)| |f(y)| dy \lesssim (1+|z|)^n (\mathcal M f)(0).
\end{align*}

Now consider $0<r<1$. Clearly
\begin{align*}
|u(z)| & \le \int |\psi(z-y)| |u(y)|^{r} |u(y)|^{1-r} dy \notag \\
& \le \int |\psi(z-y)| |u(y)|^r (1+|y|)^{\frac n r (1-r)} dy
\sup_{x} \frac {|u(x)|^{1-r} }{(1+|x|)^{\frac nr (1-r)}} \notag \\
& \lesssim (1+|z|)^{\frac nr} \mathcal M (|u|^r)(0) \cdot \sup_{x}
\frac {|u(x)|^{1-r} }{(1+|x|)^{\frac nr (1-r)}}.
\end{align*}
This then easily leads to the desired inequality.

\end{proof}

\begin{cor} \label{cor_fmr}
Let $0<r<\infty$ and $c_1>0$. Then for any $j \in \mathbb Z$ and Schwartz $f$ with $\operatorname{supp}(\hat f) \subset \{\xi:\, |\xi|
< c_1 2^j\}$, we have
\begin{align*}
|(P_{\le j} f)(x) | \lesssim_{r,c_1,n} (\mathcal M (|f|^r) (x) )^{\frac 1r}.
\end{align*}
In the above estimate the operator $P_{\le j}$ can be replaced by $P_j$ or any $\tilde P_{\le j}$, $\tilde P_j$.
\end{cor}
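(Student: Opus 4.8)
\emph{Proof plan.} The idea is to reduce everything to Lemma \ref{max_1t} by realising each of the projection operators as convolution against an $L^1$-normalised Schwartz kernel at scale $2^{-j}$, and then, crucially, to exploit the frequency localisation of $f$ \emph{itself} rather than that of $P_{\le j}f$; this is exactly the point that produces $\mathcal M(|f|^r)$, and not $\mathcal M(|P_{\le j}f|^r)$, on the right-hand side.

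First I would write $P_{\le j}f = f*\psi_j$ with $\psi_j(z)=2^{jn}\psi(2^jz)$, $\psi=\mathcal F^{-1}(\tilde\phi)\in\mathcal S(\mathbb R^n)$. Since $\operatorname{supp}(\hat f)\subset\{|\xi|<c_12^j\}$, Lemma \ref{max_1t} applied to $u=f$ with $t=c_12^j$ gives the pointwise bound
\[
|f(x-z)|\le (1+c_12^j|z|)^{\frac nr}\,G(x),\qquad G(x):=\sup_{z'\in\mathbb R^n}\frac{|f(x-z')|}{(1+c_12^j|z'|)^{\frac nr}}\lesssim_{r,n}\bigl(\mathcal M(|f|^r)(x)\bigr)^{\frac1r}.
\]
Inserting this into $|(P_{\le j}f)(x)|\le\int|\psi_j(z)|\,|f(x-z)|\,dz$ and changing variables $w=2^jz$ yields
\[
|(P_{\le j}f)(x)|\le G(x)\int|\psi_j(z)|(1+c_12^j|z|)^{\frac nr}\,dz = G(x)\int|\psi(w)|(1+c_1|w|)^{\frac nr}\,dw\lesssim_{r,c_1,n}G(x),
\]
the last integral being finite because $\psi$ is Schwartz; combining with the estimate for $G(x)$ gives the claim. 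Note this single computation handles all $0<r<\infty$ at once, so there is no need to separate the cases $r\ge 1$ and $r<1$ as in the proof of Lemma \ref{max_1t}.

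For the variants $P_j$, $\tilde P_{\le j}$, $\tilde P_j$ I would observe that in every case used in this paper the operator is convolution against a kernel of the form $2^{jn}\theta(2^j\cdot)$ with $\theta\in\mathcal S(\mathbb R^n)$: for instance $\theta=\mathcal F^{-1}(|\cdot|^s\phi_c)\in\mathcal S$ for the operator $2^{-js}|\nabla|^sP_j$ with $s>0$, since $|\cdot|^s\phi_c\in C_c^\infty(\mathbb R^n)$ ($\phi_c$ vanishing near the origin), while the fattened projections are finite sums of $P_k$-type terms. The displayed argument then applies verbatim with $\psi$ replaced by $\theta$, uniformly in $j$.

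I do not anticipate any genuine obstacle. The only point requiring care is the logical one flagged at the outset: one must estimate $|f(x-z)|$, whose Fourier support lies in $\{|\xi|<c_12^j\}$, and invoke Lemma \ref{max_1t} for $f$ rather than for $P_{\le j}f$; otherwise the corollary degenerates to the tautological inequality $|(P_{\le j}f)(x)|\lesssim(\mathcal M(|P_{\le j}f|^r)(x))^{1/r}$, which is not the desired statement. Everything else is a change of variables together with the finiteness of Schwartz-weighted integrals.
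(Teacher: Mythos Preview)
Your proof is correct and follows essentially the same route as the paper: insert the Peetre-type bound from Lemma~\ref{max_1t} (applied to $f$, using its frequency localisation) under the convolution and use the Schwartz decay of the kernel to absorb the polynomial weight. The only cosmetic difference is that the paper first dispatches the case $r\ge 1$ as trivial (since $|(P_{\le j}f)(x)|\lesssim \mathcal Mf(x)\le (\mathcal M(|f|^r)(x))^{1/r}$ by Jensen, with no frequency hypothesis needed) and only runs the Lemma~\ref{max_1t} argument for $0<r<1$, whereas you handle all $r$ uniformly.
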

\begin{proof}
The case $1\le r<\infty$ is trivial. In particular it follows from the estimate
$|f(x)|\lesssim \mathcal M f(x) \lesssim_r ((\mathcal M |f|^r)(x) )^{\frac 1r}$ when $r>1$.
Now consider $0<r<1$ and without loss of generality assume $j=1$. Clearly 
\begin{align*}
|(P_{\le 1} f)(x) | & \le \int |\psi(y)| |f(x-y) | dy \notag \\
& \lesssim \int |\psi(y)|(1+|y|)^{\frac nr} \frac{|f(x-y)|} { (1+|y|)^{\frac nr}} dy \notag \\
& \lesssim \sup_{y} \frac{|f(x-y)|} {(1+|y|)^{\frac nr}} \lesssim 
((\mathcal M (|f|^r))(x))^{\frac 1r},
\end{align*}
where in the last step we have used Lemma \ref{max_1t} (this is where the frequency localisation 
condition on $f$ is needed).
\end{proof}

\begin{rem*}
In Corollary \ref{cor_fmr}, the frequency localisation condition $\{|\xi|\lesssim 2^j\}$ is not needed for $1\le r<\infty$.
On the other hand, for $0<r<1$, this condition is indeed needed. For a simple counterexample, one can take (in 1D)
$f(y)= \chi_{\delta<|y|<1} |y|^{-1}$ for which $\mathcal M (|f|^r)$ is finite. On the other hand,
$(P_{\le 1} f)(0)$ can be made arbitrarily large by taking $\delta\to 0+$.

\end{rem*}

\begin{lem}\label{bd_h1}
Let $0<q<\infty$, $0<b<\infty$ and $c_1>0$. Let $\{f_j\}$, $j \in {\mathbb Z}$
be a sequence of functions such that
$\operatorname{supp}(\widehat{f_j}) \subset \{\xi:\, |\xi|<c_1 2^j\}$.
Then
\begin{align*}
\biggl\|\biggl(\sum_{j \in {\mathbb Z}}|\tilde P_j f_j|^b\biggr)^{\frac 1b}\biggr\|_{L^q}\lesssim_{q,b,c_1,n}
\biggl\|\biggl(\sum_{j \in {\mathbb Z}}|f_j|^b\biggr)^{\frac 1b}\biggr\|_{L^q}.
\end{align*}
\end{lem}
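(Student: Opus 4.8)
The plan is to reduce the claim, via the pointwise bound of Corollary \ref{cor_fmr}, to the vector-valued maximal inequality of Lemma \ref{lem_FS}, after raising everything to a suitably small power. First I would fix an auxiliary exponent $r$ with $0<r<\min\{q,b\}$; its precise value is irrelevant, and the implied constants will be allowed to depend on $r$, hence ultimately on $q,b,c_1,n$ as claimed. Since $\operatorname{supp}(\widehat{f_j})\subset\{\xi:\,|\xi|<c_12^j\}$, Corollary \ref{cor_fmr} (with $P_{\le j}$ replaced by $\tilde P_j$, and with the same uniform constant in $j$ after rescaling the frequency to $\sim 2^j$) gives the pointwise bound
\begin{align*}
|(\tilde P_j f_j)(x)|\lesssim_{r,c_1,n} \bigl(\mathcal M(|f_j|^r)(x)\bigr)^{\frac1r}, \qquad \forall\, j\in\mathbb Z,\ x\in\mathbb R^n.
\end{align*}

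Next I would substitute this into the left-hand side and pull out the power $1/r$: writing $g_j=|f_j|^r$,
\begin{align*}
\biggl\|\Bigl(\sum_{j\in\mathbb Z} |\tilde P_j f_j|^b\Bigr)^{\frac1b}\biggr\|_{L^q}
\lesssim_{r,c_1,n} \biggl\|\Bigl(\sum_{j\in\mathbb Z} (\mathcal M g_j)^{\frac br}\Bigr)^{\frac rb}\biggr\|_{L^{q/r}}^{\frac1r}.
\end{align*}
Because $r<\min\{q,b\}$, both $q/r$ and $b/r$ lie in $(1,\infty)$, so the Fefferman--Stein inequality (Lemma \ref{lem_FS}, with $\mathcal M$ in place of $\mathcal M_1$) applies with outer exponent $q/r$ and inner exponent $b/r$ to the sequence $(g_j)$, yielding
\begin{align*}
\biggl\|\Bigl(\sum_{j\in\mathbb Z} (\mathcal M g_j)^{\frac br}\Bigr)^{\frac rb}\biggr\|_{L^{q/r}}
\lesssim_{q,b,n} \biggl\|\Bigl(\sum_{j\in\mathbb Z} g_j^{\frac br}\Bigr)^{\frac rb}\biggr\|_{L^{q/r}}
= \biggl\|\Bigl(\sum_{j\in\mathbb Z} |f_j|^b\Bigr)^{\frac rb}\biggr\|_{L^{q/r}}.
\end{align*}

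Finally, raising the last inequality to the power $1/r$ converts the $L^{q/r}$ norm of an $r$-th power back into the $L^q$ norm of the original quantity, i.e.\ $\bigl\|(\sum_j |f_j|^b)^{r/b}\bigr\|_{L^{q/r}}^{1/r}=\bigl\|(\sum_j |f_j|^b)^{1/b}\bigr\|_{L^q}$, and chaining the three displays gives the assertion. I do not expect a serious obstacle here; the only points demanding care are (i) verifying that the constant in Corollary \ref{cor_fmr} is uniform in $j$, which it is since after rescaling it depends only on $r,c_1,n$, and (ii) the choice $0<r<\min\{q,b\}$, needed so that both rescaled exponents land in the range $(1,\infty)$ where Lemma \ref{lem_FS} is valid. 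When $q,b>1$ one could instead take any $r\in(0,1)$, or simply invoke the corollary following Lemma \ref{lem_FS} directly, but the unified choice above handles all $0<q,b<\infty$ simultaneously.
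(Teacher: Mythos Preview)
Your proposal is correct and follows essentially the same argument as the paper: apply Corollary \ref{cor_fmr} pointwise, choose $0<r<\min\{q,b\}$, and reduce to Fefferman--Stein (Lemma \ref{lem_FS}) with exponents $q/r$ and $b/r$ in $(1,\infty)$. The only cosmetic difference is that the paper writes the chain of norm manipulations slightly more tersely using its $l_j^b$ notation rather than introducing the auxiliary $g_j=|f_j|^r$.
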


\begin{proof}
Note that by Corollary \ref{cor_fmr},
\begin{align*}
|(\tilde P_j f_j)(x)|
& \lesssim ( \mathcal M (|f_j|^r) ) (x))^{\frac 1r}.
\end{align*}

Then by taking $0<r<\min\{q,b\}$ and using Lemma \ref{lem_FS}, we get
\begin{align*}
\|  (\tilde P_j f_j )_{l_j^b}  \|_{L_x^q } & \lesssim \|  ((\mathcal M (
|f_j|^r) )^{\frac 1r} )_{l_j^b} \|_{L_x^q } \notag \\
 & \lesssim \| (\mathcal M (|f_j|^r) )_{l_j^{\frac br}} \|_{L_x^{\frac qr} }^{\frac 1r} \notag \\
 & \lesssim \|  (|f_j|^r )_{l_j^{\frac br}}  \|_{L_x^{\frac qr} }^{\frac 1r} 
  \lesssim \|  (f_j)_{l_j^b}  \|_{L_x^q }.
\end{align*}

\end{proof}

We will need to use the following useful estimates sometimes without explicit mentioning.

\begin{lem}\label{lm:eq} Let  $1\le q, r\le \infty$. If $s>0$, then
\begin{align}
\| (2^{js}  P_{>j} f)_{l_j^r} \|_{L_x^q} &\lesssim \| (2^{js} P_j f )_{l_j^r} \|_{L_x^q}; \label{1.1} \\
\| (2^{js} \|P_{>j} f \|_{L_x^q} ) \|_{l_j^r} & 
\lesssim \| (2^{js} \|P_j f \|_{L_x^q} ) \|_{l_j^r}. \label{1.1a} 
\end{align}
If $0<s<1$ and $1\le q<\infty$, then
\begin{align}
\| (2^{j(s-1)} \nabla P_{\le j} f )_{l_j^r} \|_{L_x^q}
\lesssim \; \| (2^{js} P_j f)_{l_j^r} \|_{L_x^q}. \label{2.1}
\end{align}
If $0<s<1$ and $1\le q\le \infty$, then
\begin{align}
\| (2^{j(s-1)}  \| \nabla P_{\le j} f  \|_{L_x^q} ) \|_{l_j^r}
\lesssim \; \| (2^{js} \|P_j f\|_{L_x^q} )\|_{l_j^r}. \label{1.4}
\end{align}
\end{lem}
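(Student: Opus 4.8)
All four inequalities rest on one mechanism: expand the ``bad'' operator into a telescoping sum of Littlewood--Paley blocks, apply the triangle inequality, and recognize the result as a discrete convolution in the frequency index against a fixed $\ell^1$ sequence, so that Young's inequality for sequences closes the estimate. In \eqref{1.1} and \eqref{1.1a} one uses $P_{>j}f=\sum_{k\ge j+1}P_kf$ (valid away from $\xi=0$, hence for the Schwartz $f$ at hand) and the convolution kernel is $(2^{-ms})_{m\ge1}$, which lies in $\ell^1$ \emph{because} $s>0$. In \eqref{2.1} and \eqref{1.4} one uses $\nabla P_{\le j}f=\sum_{k\le j}\nabla P_kf$ and the kernel is $(2^{-m(1-s)})_{m\ge0}$, in $\ell^1$ \emph{because} $s<1$. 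Throughout one may assume the right-hand side is finite. The only estimate for which this bookkeeping is not the whole story is \eqref{2.1}, because there $\|\cdot\|_{L^q_x}$ sits outside an $\ell^r_j$ sum of genuine functions.

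For \eqref{1.1}, note that pointwise in $x$,
\[
2^{js}|P_{>j}f(x)|\le\sum_{m\ge1}2^{-ms}\bigl(2^{(j+m)s}|P_{j+m}f(x)|\bigr),
\]
so for each $x$ the discrete Young inequality (kernel $(2^{-ms})_{m\ge1}\in\ell^1$, using $s>0$) gives $\|(2^{js}P_{>j}f(x))_j\|_{\ell^r_j}\lesssim\|(2^{ks}P_kf(x))_k\|_{\ell^r_k}$; applying $\|\cdot\|_{L^q_x}$ yields \eqref{1.1}, and \eqref{1.1a} follows by running the same computation with $\|P_kf\|_{L^q}$ in place of $|P_kf(x)|$. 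For \eqref{1.4} one additionally invokes the Bernstein bound $\|\nabla P_kf\|_{L^q}\lesssim 2^k\|P_kf\|_{L^q}$ to rewrite $2^{j(s-1)}\|\nabla P_{\le j}f\|_{L^q}\lesssim\sum_{m\ge0}2^{-m(1-s)}\bigl(2^{(j-m)s}\|P_{j-m}f\|_{L^q}\bigr)$ and then applies Young with the kernel $(2^{-m(1-s)})_{m\ge0}\in\ell^1$. All of \eqref{1.1}, \eqref{1.1a}, \eqref{1.4} are uniform in $1\le q,r\le\infty$.

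For \eqref{2.1}, the same telescoping carried out \emph{pointwise} in $x$ (with $s<1$) gives
\[
\bigl\|(2^{j(s-1)}\nabla P_{\le j}f(x))_j\bigr\|_{\ell^r_j}\lesssim\bigl\|(2^{k(s-1)}\nabla P_kf(x))_k\bigr\|_{\ell^r_k},
\]
so after $\|\cdot\|_{L^q_x}$ it suffices to prove the mixed-norm estimate $\|(2^{k(s-1)}\nabla P_kf)_k\|_{L^q_x(\ell^r_k)}\lesssim\|(2^{ks}P_kf)_k\|_{L^q_x(\ell^r_k)}$. Write $2^{k(s-1)}\nabla P_kf=(2^{-k}\nabla\tilde P_k)(2^{ks}P_kf)$, where $\tilde P_k$ is a fattened projector equal to the identity on $\operatorname{supp}\widehat{P_kf}$; since $2^{-k}\nabla\tilde P_k$ is convolution with an $L^1$-rescaling of a fixed Schwartz function, it is an admissible Littlewood--Paley-type operator. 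For $r<\infty$ the desired bound is exactly Lemma \ref{bd_h1}, applied to $f_k=2^{ks}P_kf$ (whose Fourier support lies in $\{|\xi|\lesssim 2^k\}$), valid for all $0<q<\infty$. For $r=\infty$ I would argue by hand: Corollary \ref{cor_fmr} (applied to the operator $2^{-k}\nabla\tilde P_k$) gives $|2^{k(s-1)}\nabla P_kf(x)|\lesssim\bigl(\mathcal M(|2^{ks}P_kf|^{r_0})(x)\bigr)^{1/r_0}$ for any fixed $r_0\in(0,1)$; then the elementary inequality $\sup_k\mathcal Mg_k\le\mathcal M(\sup_k|g_k|)$ together with the $L^{q/r_0}$-boundedness of $\mathcal M$ (legitimate since $q/r_0>1$ for $q\ge1$) finishes the job.

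The genuinely delicate point is this last step: at the endpoints $q=1$ and $r=\infty$ of \eqref{2.1} the vector-valued Fefferman--Stein inequality (Lemma \ref{lem_FS}) is unavailable, and one must substitute Lemma \ref{bd_h1} for $r<\infty$ and, for $r=\infty$, the frequency-localized maximal estimate of Corollary \ref{cor_fmr} combined with the commutation $\sup_k\mathcal M\le\mathcal M\sup_k$. Away from those endpoints — and for all of \eqref{1.1}, \eqref{1.1a}, \eqref{1.4} — the proof is pure manipulation of convergent geometric series.
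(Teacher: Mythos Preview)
Your proof is correct and follows essentially the same route as the paper: Young's inequality in $\ell^r$ for \eqref{1.1}, \eqref{1.1a}, \eqref{1.4}, and for \eqref{2.1} the pointwise Young reduction followed by Lemma~\ref{bd_h1} applied to $\tilde P_k = 2^{-k}\nabla\cdot(\text{fattened projector})$ acting on $2^{ks}P_kf$. The one place you go beyond the paper is the endpoint $r=\infty$ of \eqref{2.1}: the paper simply invokes Lemma~\ref{bd_h1}, which as stated only covers $0<b<\infty$, whereas you supply the missing argument via Corollary~\ref{cor_fmr} and the elementary commutation $\sup_k\mathcal M g_k\le\mathcal M(\sup_k|g_k|)$ --- a genuine (if minor) improvement in completeness.
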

\begin{proof} 
The inequality \eqref{1.1} follows from Young's inequality in $l^r$ space.  For \eqref{1.1a} one can first use triangle inequality
in $L_x^q$ and then use Young's inequality in $l^r$ space to conclude.

For \eqref{2.1}, just observe that by Young's inequality in $l^r$, we have
\begin{align*}
(2^{j(s-1)} \nabla P_{\le j} f)_{l_j^r} \lesssim (2^{js} \tilde P_j P_j f )_{l_j^r},
\end{align*}
where $\tilde P_j$ has frequency localised to $\{\xi:\, |\xi| \sim 2^j\}$.  The desired result
then follows from Lemma \ref{bd_h1} (here we need $q<\infty$). 

The proof of \eqref{1.4} is simpler and therefore omitted.

\end{proof}

\section{Regularity of pressure}
\begin{proof}[Proof of Theorem \ref{t0}]
\underline{Proof of \eqref{sob}}. We first notice that the case $s=0$ follows from \eqref{4}, the case
 $s=1$ follows from \eqref{4a} and standard bounds for Riesz type
operators. Therefore we may assume $0<s<1$ and estimate only the homogeneous $\dot W^{2s, q}$-norm of $p$.

Denote $R_{lk} = (-\Delta)^{-1} \partial_l \partial_k$ and make the decomposition
\begin{align}
p&=  \sum_{l,k}\sum_{j}  \biggl(R_{lk} ( P_{\le j-2} u_l P_j u_k) +
 R_{lk} ( P_j u_l  P_{\le j-2} u_k) +
 R_{lk} ( P_j u_l \tilde P_j u_k) \biggr) \notag \\
 &=: (A)+(B)+(C). \label{p_abc}
\end{align}
where $\tilde P_j=P_{j-1}+P_j+P_{j+1}$. 

For the first piece,  by using frequency localisation and the fact that $$ \sum_k \partial_k ( P_{\le j-2} u_l P_j u_k)
= \sum_{k} (\partial_k P_{\le j-2} u_l ) u_k,$$  we obtain
\begin{align*}
\| |\nabla|^{2s} (A) \|_{q}& \lesssim \| (2^{j(2s-1)}  | \nabla P_{\le j-2} u |  \cdot |P_j u | )_{l_j^2} \|_{q} \notag \\ 
&\lesssim \| (2^{j(s-1)} \nabla P_{\le j-2} u )_{l_j^2} \|_{2q} \cdot \| (2^{js} P_j u )_{l_j^2} \|_{2q} \lesssim \| u\|_{\dot W^{s,2q}}^2,
\end{align*}
where in the last step we have used Lemma \ref{lm:eq}.

The estimate of (B) is similar to (A) and therefore omitted.  For (C) we  write $u_l$ and $u_k$ simply as $u$, and estimate
\begin{align*}
\| |\nabla|^{2s}(C)\|_{q} & \lesssim \| ( 2^{2ms} | \sum_{j\ge m-10} P_j u \tilde P_j u |)_{l_m^2} \|_{q} \notag \\
&\lesssim \| (2^{js} P_j u)_{l_j^2} \|_{2q} \| (2^{js} \tilde P_j u )_{l_j^2} \|_{2q} \lesssim  \| u \|^2_{\dot W^{s,2q} }.
\end{align*}

\underline{Proof of \eqref{besov}}.  Note here $0<s<1$.  We use the same decomposition of $p$ as above. For the first
piece, one has
\begin{align*}
\| (A) \|_{\dot B^{2s}_{q,r} } & \lesssim \| (2^{j(2s-1)} \| \nabla P_{\le j-2} u \|_{2q} \| P_j u \|_{2q} ) \|_{l_j^r} \notag \\
& \lesssim \| (2^{j(s-1)} \| \nabla P_{\le j-2} u\|_{2q}) \|_{l_j^{2r} } \cdot \| u\|_{\dot B^{s}_{2q,2r}} 
\lesssim \| u \|^2_{\dot B^s_{2q,2r}}.
\end{align*}
The estimate of (B) is identical and omitted. For (C) we have (here we only need to require $s>0$)
\begin{align*}
\| (C) \|_{\dot B^{2s}_{q,r}} 
& \lesssim \| ( 2^{2ms} \| \sum_{j\ge m-10} P_j u \tilde P_j u \|_q )\|_{l_m^r} \lesssim
\| u \|_{\dot B^s_{2q,2r}}^2.
\end{align*}
\end{proof}
\begin{rem*}
The estimate \eqref{sob} can be easily derived using the new fractional Leibniz rule
introduced in \cite{L18}. Namely by using Corollary 1.4 in \cite{L18} and taking $s_1=s_2=s$,
$A^{2s}_{lk}= |\nabla|^{2s} (-\Delta)^{-1} \partial_l \partial_k$, and noting that $0<s<1$, we obtain
\begin{align*}
\| A^{2s}_{lk} (u_l u_k) -  u_kA^{2s}_{lk} u_l - u_l A^{2s}_{lk} u_k \|_{\frac q2}
\lesssim \| |\nabla|^s u_l \|_q \| |\nabla|^s u_k \|_q \lesssim \| |\nabla|^s u \|_q^2.
\end{align*}
Summing in $l$ and $k$ (inside the norm) and using incompressibility then yields the result.
\end{rem*}

\begin{proof}[Proof of Theorem \ref{t2}]
As was already mentioned before the case $0<s<1$ follows from \eqref{besov} and the embedding of $\dot B^{0}_{1,1}$ into 
$\mathcal H^1$.  Thus we only need to consider the case $s=1$.  By using \eqref{p_abc} and the same manipulations therein, we have
\begin{align*}
\|  |\nabla|^2 (A) \|_{\mathcal H^1}
&\lesssim \| (2^j |\nabla P_{\le j-2} u | \cdot |P_j u | )_{l_j^2} \|_1 
\lesssim \| (|\nabla P_{\le j-2} u|)_{l_j^{\infty}} \|_2 \| (2^j P_j u)_{l_j^2} \|_2 \notag \\
& \lesssim \| \nabla u \|_2^2.
\end{align*}
For the third piece, a close inspection of the Besov case in Theorem \ref{t0} shows that
\begin{align*}
\| (C) \|_{\dot B^2_{1,1} } \lesssim \| \nabla u \|_2^2.
\end{align*}
Since $\| |\nabla|^2 (C) \|_{\mathcal H^1} \lesssim \| |\nabla|^2(C) \|_{\dot B^0_{1,1}}$, the desired result follows easily. 
\end{proof}

\begin{proof}[Proof of Theorem \ref{dc}]
We first decompose the product $f\cdot g$ as
\begin{align*}
f\cdot g &= \underbrace{ \sum_{j\in \mathbb Z} f_{\le j-2} \cdot g_j}_{=:(a)} + 
\underbrace{\sum_{j\in \mathbb Z} f_j \cdot g_{\le j-2} }_{=:(b)} + 
\underbrace{\sum_{j\in \mathbb Z}
\tilde f_j \cdot  g_j}_{=:(c)}, \notag \\
\end{align*}
where $\tilde f_j = P_{j-1} f+ P_j f + P_{j+1} f$.  We only need to estimate $(a)$ and $(c)$ since the estimate of $(b)$ is
similar to $(a)$.  For $(a)$, note that by frequency localisation, we have
\begin{align*}
\| |\nabla|^s ( a) \|_{\mathcal H^1}  &\lesssim \| ( 2^{js} |f_{\le j-2} | \cdot |g_j |)_{l_j^2} \|_1\lesssim \| (2^{js} g_{j} )_{l_j^2} \|_{p_2}
\cdot \| (f_{\le j-2})_{l_j^{\infty}} \|_{p_2^{\prime}}  \notag \\
& \lesssim\; \| |\nabla|^s g \|_{p_2} \cdot \| f\|_{p_2^{\prime}}.
\end{align*}
One should note that for this estimate we do not have severe constraints on the exponent $s$. 
On the other hand if $-1<s\le 0$, then
\begin{align*}
\| |\nabla|^s (a) \|_{\mathcal H^1} 
& \lesssim \| (2^{js} |f_{\le j-2} | )_{l_j^\infty} \|_{p_1} \| (g_j)_{l_j^{2}} \|_{p_1^{\prime}}
\lesssim \| |\nabla|^s f \|_{p_1} \| g \|_{p_1^{\prime}}.
\end{align*}
Thus
\begin{align*}
\| |\nabla|^s(a) \|_{\mathcal H^1} \lesssim
\begin{cases} 
 \| |\nabla|^s g \|_{p_2} \cdot \| f\|_{p_2^{\prime}}, \quad \text{if $s>0$;} \\
\min\{ \| |\nabla|^s f \|_{p_1} \| g \|_{p_1^{\prime}}, \; \| |\nabla|^s g \|_{p_2} \| f \|_{p_2^{\prime}} \},
\quad \text{if $-1<s\le 0$.}
\end{cases}
\end{align*}

Now for the diagonal piece $(c)$, we first note that for $s>0$ the ``Div-Curl" condition is not needed since
\begin{align*}
\| |\nabla|^s(c) \|_{\mathcal H^1} & \lesssim \| |\nabla|^s(c) \|_{\dot B^0_{1,1}} \notag \\
& \lesssim \sum_{k} 2^{ks} \sum_{j\ge k-10} \| \tilde f_j g_j \|_1 \notag \\
&\lesssim
\min\{ \| |\nabla|^s f \|_{p_1} \| g \|_{p_1^{\prime}}, \; \| |\nabla|^s g \|_{p_2} \| f \|_{p_2^{\prime}} \}.
\end{align*}

In the regime $-1<s\le 0$ we need to exploit the ``Div-Curl" condition. 
Since $\nabla \times g=0$ we can write
$g= \nabla \psi$ for a scalar potential $\psi$. Since $\nabla \cdot f=0$, we have
\begin{align*}
(c)= \sum_j \tilde f_j \cdot g_j = \sum_j \nabla\cdot ( \tilde f_j  \psi_j).
\end{align*}
Clearly by frequency localisation,
\begin{align*}
\| |\nabla|^s (c) \|_{\mathcal H^1} &\lesssim \| |\nabla|^s(c) \|_{\dot B^0_{1,1}}
\lesssim  \| (2^{k(s+1)} \sum_{j\ge k-10} |f_j| \cdot |\psi_j| )_{l_k^1} \|_1 \notag \\
&\lesssim \| (2^{j(s+1)} |f_j| \cdot |\psi_j|)_{l_j^1} \|_1 \notag \\
&\lesssim \min\{ \| |\nabla|^s f \|_{p_1} \| g \|_{p_1^{\prime}}, \; \| |\nabla|^s g \|_{p_2} \| f \|_{p_2^{\prime}} \}.
\end{align*}
\end{proof}

\begin{rem*}
We record here a quite useful (albeit simple) estimate extracted from the proof above. For any 
$\varphi\in C_c^{\infty}(\mathbb R^n)$, $\psi \in C_c^{\infty} (\mathbb R^n)$ with support 
\emph{localised to an
annulus} (say $\{|\xi| \sim 1\}$), define 
\begin{align*}
\widehat{P_j^{\varphi} f} (\xi) = \varphi(2^{-j} \xi) \hat f(\xi), 
\quad \widehat{P_j^{\psi} f} (\xi) = \psi(2^{-j} \xi) \hat f(\xi).
\end{align*}
Then  for any $s>0$, any $s_1,s_2\ge 0$ with $s_1+s_2=s$,
and any $f \in \mathcal S(\mathbb R^n)$, $g \in \mathcal S(\mathbb R^n)$, we have
\begin{align}
\| |\nabla|^s ( \sum_j P_j^{\varphi} f
P_j^{\psi} g ) \|_{\mathcal H^1} & \lesssim 
\| \sum_j P_j^{\varphi} f P_j^{\psi} g \|_{\dot B^s_{1,1}}  \notag \\
&\lesssim_{s,\varphi,\psi,n,r,s_1,s_2}
\,  \| |\nabla|^{s_1} f \|_{r} \| |\nabla|^{s_2} g\|_{r^{\prime}},   \label{pj_diagonal}
\end{align}
where $1<r<\infty$, $r^{\prime}=\frac r {r-1}$. 

Similarly for any $s>0$, any $s_1,s_2\ge0$ with $s_1+s_2=s$, any
$1<p<\infty$, any $1<p_1,p_2< \infty$ with $\frac 1p =\frac 1{p_1}+\frac 1{p_2}$, 
we have
\begin{align}
\| |\nabla|^s ( \sum_j P_j^{\varphi} f
P_j^{\psi} g ) \|_{p} &\lesssim_{s,\varphi,\psi,n,p,p_1,p_2,s_1,s_2}
\,  \| |\nabla|^{s_1} f \|_{p_1} \| |\nabla|^{s_2} g\|_{p_2}.   \label{pj_diagonal1}
\end{align}
Also (the following corresponds to $p_2=\infty$)
\begin{align}
\| |\nabla|^s ( \sum_j P_j^{\varphi} f
P_j^{\psi} g ) \|_{p} &\lesssim_{s,\varphi,\psi,n,p,s_1,s_2}
\,  \| |\nabla|^{s_1} f \|_{p} \| |\nabla|^{s_2} g\|_{\dot B^0_{\infty,\infty} }.   \label{pj_diagonal2}
\end{align}
\end{rem*}

Inspired by the ``Div-Curl" theorem and the considerations above, we record below a simple fractional Leibniz rule in the frequency-localised context which is
quite useful for practical purposes.
For $s>0$,  denote by $A^s$  a differential operator such that its symbol $\widehat{A^s}(\xi)$  is a homogeneous
function of degree $s$ and $\widehat{A^s}(\xi) \in C^{\infty}(\mathbb S^{n-1})$ (for example:
$\widehat{A^s}(\xi) = i |\xi|^{s-1} \xi_1$ which corresponds to $A^s= |\nabla|^{s-1} \partial_1$, or
$\widehat{A^s}(\xi)=|\xi|^s$ which corresponds to $A^s=|\nabla|^s=D^s$).  We have the following proposition.

\begin{prop} \label{prop_H1case}
Let $s>0$. Then for any $1<p_1,p_2<\infty$ with $\frac 1{p_1}+\frac 1 {p_2}=1$,
any $s_1,s_2\ge 0$ with $s_1+s_2=s$, and any $f$, $g\in \mathcal S(\mathbb R^n)$, we have
\begin{align*}
\|A^s (fg)  &- \sum_{j \in \mathbb Z} A^s (f_{\le j-2} g_j) - \sum_{j \in \mathbb Z}
A^s (f_j g_{\le j-2} ) \|_{\mathcal H^1} \notag \\
& \lesssim \|A^s (fg)  - \sum_{j \in \mathbb Z} A^s (f_{\le j-2} g_j) - \sum_{j \in \mathbb Z}
A^s (f_j g_{\le j-2} ) \|_{\dot B^0_{1,1} } \notag \\
&\lesssim_{A^s, s,s_1,s_2, p,p_1,p_2, n}
\| D^{s_1} f \|_{p_1} \|D^{s_2} g \|_{p_2}.
\end{align*}
For any $1<p<\infty$,  any $1<p_1,p_2<\infty$ with $\frac 1{p_1}+\frac 1 {p_2}=\frac 1p$,
any $s_1,s_2\ge 0$ with $s_1+s_2=s$, and any $f$, $g\in \mathcal S(\mathbb R^n)$, we have
\begin{align*}
\|A^s (fg)  &- \sum_{j \in \mathbb Z} A^s (f_{\le j-2} g_j) - \sum_{j \in \mathbb Z}
A^s (f_j g_{\le j-2} ) \|_{p} \notag \\
&\lesssim_{A^s, s,s_1,s_2, p,p_1,p_2, n}
\| D^{s_1} f \|_{p_1} \|D^{s_2} g \|_{p_2}.
\end{align*}
Also
\begin{align*}
\|A^s (fg)  &- \sum_{j \in \mathbb Z} A^s (f_{\le j-2} g_j) - \sum_{j \in \mathbb Z}
A^s (f_j g_{\le j-2} ) \|_{p} \notag \\
&\lesssim_{A^s, s,s_1,s_2, p, n}
\| D^{s_1} f \|_{p} \|D^{s_2} g \|_{\dot B^0_{\infty,\infty} }.
\end{align*}

\end{prop}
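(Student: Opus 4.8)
The plan is to identify the left-hand side with the ``diagonal'' piece of the Bony paraproduct and then to estimate that piece by the same mechanism behind the remark estimates \eqref{pj_diagonal}--\eqref{pj_diagonal2}, carrying the general operator $A^s$ through the argument. By the Bony decomposition $fg=\sum_jf_j\tilde g_j+\sum_jf_jg_{\le j-2}+\sum_jg_jf_{\le j-2}$ (with $\tilde g_j=P_{j-1}g+P_jg+P_{j+1}g$),
\begin{align*}
A^s(fg)-\sum_{j\in\mathbb Z}A^s(f_{\le j-2}g_j)-\sum_{j\in\mathbb Z}A^s(f_jg_{\le j-2})=\sum_{j\in\mathbb Z}A^s(f_j\tilde g_j)=:H.
\end{align*}
Two elementary facts power the proof. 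First, each product $f_j\tilde g_j$ is frequency-localized to a \emph{ball} $\{|\xi|\le c2^j\}$ for a fixed $c=c(n)$, so $P_k(f_j\tilde g_j)=0$ whenever $j<k-C$ for a fixed integer $C=C(n)$; thus $P_kH=\sum_{j\ge k-C}P_kA^s(f_j\tilde g_j)$. Second, since $\widehat{A^s}$ is homogeneous of degree $s$ and $C^\infty$ on $\mathbb S^{n-1}$, the symbol $\phi_c(\xi)\widehat{A^s}(\xi)$ lies in $C_c^\infty$, so by scaling the convolution kernel of $P_kA^s$ equals $2^{ks}$ times $2^{kn}\Phi(2^k\cdot)$ for a fixed Schwartz $\Phi$ with $\widehat\Phi$ annulus-supported; hence, uniformly in $k$, $\|P_kA^sw\|_1\lesssim 2^{ks}\|w\|_1$ and $|P_kA^sw(x)|\lesssim 2^{ks}(\mathcal Mw)(x)$. (One could equivalently quote \eqref{pj_diagonal}--\eqref{pj_diagonal2} for $|\nabla|^s$ and transfer to $A^s$ via the $L^p$- and $\mathcal H^1$-boundedness of the zero-homogeneous multiplier $\widehat{A^s}(\xi)|\xi|^{-s}$; I prefer to argue directly.)

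For the $\mathcal H^1$ (hence $\dot B^0_{1,1}$) bound I would avoid maximal functions. Using $\mathcal SH\le\sum_k|P_kH|$ pointwise,
\begin{align*}
\|H\|_{\mathcal H^1}=\|\mathcal SH\|_1\le\|H\|_{\dot B^0_{1,1}}=\sum_k\|P_kH\|_1\lesssim\sum_k2^{ks}\sum_{j\ge k-C}\|f_j\tilde g_j\|_1,
\end{align*}
and exchanging the summations, then summing the geometric series over $k\le j+C$ (this is where $s>0$ enters), reduces the right side to $\sum_j\|(2^{js_1}f_j)(2^{js_2}\tilde g_j)\|_1$. A pointwise Cauchy--Schwarz in $j$ followed by H\"older in $x$ with $\frac1{p_1}+\frac1{p_2}=1$ bound this by $\|(\sum_j2^{2js_1}|f_j|^2)^{1/2}\|_{p_1}\|(\sum_j2^{2js_2}|\tilde g_j|^2)^{1/2}\|_{p_2}$, which is $\lesssim\|D^{s_1}f\|_{p_1}\|D^{s_2}g\|_{p_2}$ by the Littlewood--Paley square-function characterization of $\dot W^{s_i,p_i}$ (the ordinary square function when $s_i=0$, cf. \eqref{cor:fs}), noting that replacing $P_j$ by $\tilde P_j$ costs only a fixed constant.

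For the $L^p$ estimates, $1<p<\infty$, I would instead run through $\|H\|_p\sim\|\mathcal SH\|_p$. From $|P_kH(x)|\lesssim 2^{ks}\sum_{j\ge k-C}\mathcal M(f_j\tilde g_j)(x)$, inserting the weights $2^{\pm(j-k)s/2}$ and applying Cauchy--Schwarz in $j$ (again $\sum_{j\ge k-C}2^{-(j-k)s}\lesssim1$ because $s>0$) gives $\mathcal SH(x)\lesssim(\sum_j2^{2js}\mathcal M(f_j\tilde g_j)(x)^2)^{1/2}$ pointwise. Fefferman--Stein (Lemma~\ref{lem_FS} with $r=2$) then removes the maximal operators, and the elementary pointwise bound $\sum_j|c_j|^2|d_j|^2\le(\sum_j|c_j|^2)(\sum_j|d_j|^2)$ with $c_j=2^{js_1}f_j$, $d_j=2^{js_2}\tilde g_j$, followed by H\"older with $\frac1{p_1}+\frac1{p_2}=\frac1p$, produces $\|D^{s_1}f\|_{p_1}\|D^{s_2}g\|_{p_2}$ as before. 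For the last estimate one replaces that step by $\sum_j|c_j|^2|d_j|^2\le(\sum_j|c_j|^2)\sup_j|d_j|^2$, so the $g$-factor is $\|\sup_j2^{js_2}|\tilde g_j|\|_\infty\lesssim\|D^{s_2}g\|_{\dot B^0_{\infty,\infty}}$ and the $f$-factor is $\|(\sum_j2^{2js_1}|f_j|^2)^{1/2}\|_p\lesssim\|D^{s_1}f\|_p$.

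The substance of the proposition is structural rather than computational: it is the cancellation of the two paraproduct pieces that leaves behind the diagonal sum, whose summands are supported in \emph{balls} $\{|\xi|\lesssim2^j\}$ rather than annuli, and this (together with $s>0$) is exactly what makes the output-frequency sum $\sum_{j\ge k-C}$ absolutely convergent and forces the $s$-derivative to split ``evenly'' as $2^{js_1}f_j\cdot2^{js_2}\tilde g_j$. The only place demanding care is the $\mathcal H^1$ endpoint, where the vector-valued maximal inequality is unavailable and one must argue directly in $\dot B^0_{1,1}$ using only the $L^1\to L^1$ bound for $P_kA^s$; and in the $L^p$ range one must first collapse the full $j$-summation into a single $\ell^2_j$ square function, exploiting the geometric gain from $s>0$, before invoking Fefferman--Stein.
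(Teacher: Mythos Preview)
Your argument is correct and follows exactly the paper's route: identify the remainder as the diagonal paraproduct piece and then invoke the mechanism behind \eqref{pj_diagonal}--\eqref{pj_diagonal2} (the $\dot B^0_{1,1}$ estimate for the $\mathcal H^1$ endpoint, square functions plus Fefferman--Stein for $L^p$). The only cosmetic difference is that you carry the general operator $A^s$ through the computation directly, whereas the paper's cited estimates are written for $|\nabla|^s$ and tacitly rely on the boundedness of the zero-order multiplier $A^s|\nabla|^{-s}$ to transfer; both are equivalent.
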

\begin{rem*}
Since $f$, $g\in \mathcal S(\mathbb R^n)$, one does not need to worry about the convergence issues. For example, it
is easy to check that $\sum_j f_{\le j-2} g_j \in H^m(\mathbb R^n)$, $\sum_j \tilde f_j g_j \in W^{m,2}(\mathbb R^n)$, for all $m\ge 0$,
and 
\begin{align*}
&A^s (\sum_{j} f_{\le j-2} g_j) = \sum_j A^s (f_{\le j-2} g_j), \notag \\
& A^s( \sum_j \tilde f_j g_j ) = \sum_j A^s (\tilde f_j g_j),
\end{align*}
where the above equality holds point-wisely for $x\in \mathbb R^n$ and in stronger norms.
\end{rem*}
\begin{proof}[Proof of Proposition \ref{prop_H1case}]
This follows from the para-product decomposition, \eqref{pj_diagonal}, \eqref{pj_diagonal1}
and \eqref{pj_diagonal2}.
\end{proof}

\begin{cor}
Let $s>0$. For any $1<r_1,r_2<\infty$, we have
\begin{align*}
\| A^s (fg )\|_{\mathcal H^1} \lesssim_{A^s, s,n,r_1,r_2}\; \| D^s f \|_{r_1} \| g \|_{\frac {r_1} {r_1-1}} + \| f \|_{r_2}
\|D^s g \|_{\frac {r_2} {r_2-1}}.
\end{align*}
For any $1<p<\infty$, any $1<p_1,p_3 <\infty$, $1< p_2,p_4 \le \infty$ with $
\frac 1 p=\frac 1 {p_1}+\frac 1 {p_2} = \frac 1 {p_3}+\frac 1 {p_4}$, we have
\begin{align*}
\| A^s (fg) \|_p \lesssim_{A^s,s,n,p,p_1,p_2,p_3,p_4}\; \| D^s f \|_{p_1} \| g\|_{p_2} + \| D^s g\|_{p_3} \| f \|_{p_4}.
\end{align*}

\end{cor}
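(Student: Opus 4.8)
The plan is to reduce everything to Proposition~\ref{prop_H1case} by peeling off the two low--high paraproduct pieces. Via the Bony decomposition I would write
\begin{align*}
A^s(fg) &= \sum_{j\in\mathbb Z} A^s(f_{\le j-2}g_j) + \sum_{j\in\mathbb Z} A^s(f_j g_{\le j-2}) + R,\\
R &:= A^s(fg) - \sum_{j} A^s(f_{\le j-2}g_j) - \sum_{j} A^s(f_j g_{\le j-2}),
\end{align*}
and then control $R$ by Proposition~\ref{prop_H1case} with $(s_1,s_2)=(s,0)$: in $\mathcal H^1$ this gives $\|R\|_{\mathcal H^1}\lesssim\|D^s f\|_{r_1}\|g\|_{r_1'}$ (legitimate since $1<r_1,r_1'<\infty$), and in $L^p$ it gives $\|R\|_p\lesssim\|D^s f\|_{p_1}\|g\|_{p_2}$ whenever $p_2<\infty$; for the remaining case $p_2=\infty$ (so $p_1=p$) the third estimate of Proposition~\ref{prop_H1case} gives $\|R\|_p\lesssim\|D^s f\|_p\|g\|_{\dot B^0_{\infty,\infty}}\lesssim\|D^s f\|_p\|g\|_\infty$. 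One may instead load the full $D^s$ onto $g$; either choice works, and one simply picks the one that avoids feeding a forbidden exponent into the hypotheses.

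It then remains to estimate the two paraproduct pieces, and this is carried out exactly as for the terms $(a)$, $(b)$ in the proof of Theorem~\ref{dc} (equivalently $(A)$, $(B)$ in the proof of Theorem~\ref{t0}). Since $\widehat{A^s(f_j g_{\le j-2})}$ is supported in an annulus $\{|\xi|\sim 2^j\}$, the square-function characterisation of $\mathcal H^1$ together with the frequency-localised maximal-function bounds (Corollary~\ref{cor_fmr} and Lemma~\ref{bd_h1}) yields
\begin{align*}
\Big\|\sum_j A^s(f_j g_{\le j-2})\Big\|_{\mathcal H^1}
&\lesssim \big\|(2^{js}\,|f_j|\,|g_{\le j-2}|)_{l_j^2}\big\|_1
\lesssim \big\|(2^{js}f_j)_{l_j^2}\big\|_{r_1}\,\big\|(g_{\le j-2})_{l_j^\infty}\big\|_{r_1'}\\
&\lesssim \|D^s f\|_{r_1}\,\|g\|_{r_1'},
\end{align*}
using the Littlewood--Paley equivalence $\|(2^{js}f_j)_{l_j^2}\|_{r_1}\sim\|D^s f\|_{r_1}$ ($1<r_1<\infty$) and $\|\sup_j|g_{\le j-2}|\|_{r_1'}\lesssim\|\mathcal M g\|_{r_1'}\lesssim\|g\|_{r_1'}$. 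Symmetrically $\|\sum_j A^s(f_{\le j-2}g_j)\|_{\mathcal H^1}\lesssim\|f\|_{r_2}\|D^s g\|_{r_2'}$, and adding the three contributions proves the $\mathcal H^1$ bound. The $L^p$ bound follows the same scheme: for annulus-localised summands one has $\|\sum_j h_j\|_p\lesssim\|(\sum_j|h_j|^2)^{1/2}\|_p$, and Fefferman--Stein (Lemma~\ref{lem_FS}) applies, both for $1<p<\infty$; Hölder with $\tfrac1p=\tfrac1{p_1}+\tfrac1{p_2}$ then gives $\|\sum_j A^s(f_j g_{\le j-2})\|_p\lesssim\|D^s f\|_{p_1}\|g\|_{p_2}$ (the endpoint $p_2=\infty$ being covered by $\|\mathcal M g\|_\infty\le\|g\|_\infty$), and likewise $\|\sum_j A^s(f_{\le j-2}g_j)\|_p\lesssim\|f\|_{p_4}\|D^s g\|_{p_3}$; combining these with the bound on $R$ gives the stated estimate.

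Given Proposition~\ref{prop_H1case} and the paraproduct machinery already in place, I do not expect a genuine obstacle here; the two points needing attention are (i) choosing, separately for the $\mathcal H^1$- and $L^p$-cases, which form of Proposition~\ref{prop_H1case} to invoke for $R$ — distributing all of $D^s$ onto one factor and switching to the $\dot B^0_{\infty,\infty}$ version precisely when the companion Lebesgue exponent is $\infty$ — so that the hypotheses of that proposition are never violated; and (ii) the step in which, inside the $L^1$ (resp. $L^p$) norm, $|A^s(f_j g_{\le j-2})|$ is replaced by the pointwise product $2^{js}|f_j|\,|g_{\le j-2}|$, which hinges on the annulus localisation of the paraproduct summands and is exactly where Corollary~\ref{cor_fmr} and Lemma~\ref{bd_h1} are used. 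Everything else is routine bookkeeping already performed in the proofs above.
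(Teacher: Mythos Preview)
Your proposal is correct and follows essentially the same approach as the paper: split $A^s(fg)$ into the two low--high paraproduct pieces plus the remainder $R$, control $R$ by Proposition~\ref{prop_H1case}, and estimate each paraproduct piece directly via frequency localisation, the square-function/Littlewood--Paley characterisation, and H\"older. The paper's proof is terse (it exhibits only the bound $\|\sum_j A^s(f_{\le j-2}g_j)\|_p\lesssim\|D^s g\|_{p_3}\|f\|_{p_4}$ and omits the rest), but your expanded treatment---in particular the careful case split on whether the companion exponent is $\infty$ when invoking Proposition~\ref{prop_H1case} for $R$---is exactly the intended argument.
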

\begin{proof}
This follows easily from Proposition \ref{prop_H1case}. For example, one observes that 
\begin{align*}
\| \sum_j A^s (f_{\le j-2} g_j) \|_p \lesssim \| (2^{js} g_j)_{l_j^2} \|_{p_3} \| (f_{\le j-2})_{l_j^{\infty}}  \|_{p_4}
\lesssim \| D^s g \|_{p_3} \| f \|_{p_4}.
\end{align*}
The other cases are similarly estimated. We omit the details.
\end{proof}

\section{counterexamples}


\begin{proof}[Proof of Theorem \ref{prop1}, case $s=1$.]

Step 1: Some simplification. We shall take
$u=(u_1,u_2,0,0,\cdots,0)$, where
\begin{align*}
u_1 = -\partial_2 \phi, \quad u_2 = \partial_1 \phi,
\end{align*}
and $\phi:\, \mathbb R^n \to \mathbb R$ will be chosen later. Note
that by construction $u$ is divergence-free. Now
\begin{align}
-\Delta p &= \sum_{j,k} \partial_j u_k \partial_k u_j \notag \\
& = (\partial_1 u_1)^2 +(\partial_2 u_2)^2 + 2\partial_1 u_2
\partial_2 u_1 \notag \\
& = 2 \Bigl( (\partial_{12} \phi)^2 - \partial_{11} \phi
\partial_{22} \phi \Bigr). \label{pphi_100}
\end{align}

Clearly, the task now is to find a scalar function $\phi \in
\mathcal S(\mathbb R^n)$ with $\|\phi\|_2 + \| \phi\|_{\dot B^2_{\infty,\infty}} \le
1$, such that
\begin{align*}
\| (\partial_{12} \phi)^2 - \partial_{11} \phi
\partial_{22} \phi \|_{\dot B^0_{\infty,\infty} } \gg 1.
\end{align*}
As a matter of fact, we shall show that for some $j_1 \gg 1$, 
\begin{align*}
\| P_{j_1} \bigl( (\partial_{12} \phi)^2 - \partial_{11} \phi
\partial_{22} \phi  \bigr)\|_{\infty} \gg 1,
\end{align*}
i.e. the norm inflation occurs at a high frequency block.

Step 2: There exists $\phi_0 \in \mathcal S(\mathbb R^n)$ with
$\operatorname{supp}(\widehat{\phi_0}) \subset\{ \, \xi: \; \frac 23
\le |\xi| \le \frac 56\}$ and
\begin{align*}
\Bigl( (\partial_{12} \phi_0)^2 - \partial_{11} \phi_0
\partial_{22} \phi_0 \Bigr)(0) \ne 0.
\end{align*}

Indeed one can take $\widehat{\phi_0}(\xi)$ as a radial real-valued
function in $\xi$ with
\begin{align*}
\int \xi_1 \xi_2 \widehat{\phi_0}(\xi) d\xi =0, \\
\int \xi_1^2 \widehat{\phi_0} (\xi) d\xi \ne 0, \\
\int \xi_2^2 \widehat{\phi_0}(\xi) d\xi \ne 0.
\end{align*}
These yield $(\partial_{12} \phi_0)(0)=0$, $(\partial_{11}
\phi_0)(0) \cdot (\partial_{22} \phi_0)(0) \ne 0$.

Step 3: Now take $J\gg 1$, $J$ being an even integer and
\begin{align*}
\phi(x) = \sum_{j=100}^J 2^{-2 j^2} \phi_0(2^{j^2} x).
\end{align*}
Easy to check that (note that $\operatorname{supp}(\hat \phi_0) \subset
\{ \frac 2 3 \le |\xi| \le \frac 5 6\} \subset
\{ \frac 12 <|\xi|< \frac 7 6 \}$)
\begin{align}
\| \phi\|_2 +\| \phi\|_{\infty}+ \| \phi \|_{\bs 2} \lesssim 1.
\end{align}

Then clearly
\begin{align}
&(\partial_{12} \phi (x))^2 \notag \\
= & \sum_{j=100}^J (\partial_{12} \phi_0)(2^{j^2} x)  (\partial_{12}
\phi_0)(2^{j^2} x) \notag \\
& + 2 \sum_{j_1=100}^J (\partial_{12} \phi_0)(2^{j_1^2} x) \bigl(
\sum_{100\le j_2<j_1} (\partial_{12} \phi_0)(2^{j_2^2} x) \bigr).
\end{align}

Similarly
\begin{align}
&  \partial_{11} \phi(x) \partial_{22} \phi(x) \notag \\
= & \sum_{j=100}^J (\partial_{11} \phi_0)(2^{j^2} x) (\partial_{22}
\phi_0 )(2^{j^2} x) \notag \\
& + \sum_{j_1=100}^J(\partial_{11} \phi_0)(2^{j_1^2} x) \bigl(
\sum_{100\le j_2<j_1} (\partial_{22} \phi_0)(2^{j_2^2} x) \bigr)
\notag \\
&   + \sum_{j_1=100}^J(\partial_{22} \phi_0)(2^{j_1^2} x) \bigl(
\sum_{100\le j_2<j_1} (\partial_{11} \phi_0)(2^{j_2^2} x) \bigr).
\end{align}

Thus
\begin{align}
& (\partial_{12} \phi(x) )^2 - \partial_{11} \phi(x) \partial_{22}
\phi(x) \notag \\
= & H_1(x)+H_2(x),
\end{align}
where
\begin{align}
H_1(x) = \sum_{j=100}^J \biggl( (\partial_{12}
\phi_0)^2-\partial_{11}\phi_0 \partial_{22} \phi_0 \biggr)(2^{j^2}
x),
\end{align}
is the ``diagonal piece", and $H_2(x)$ is the off-diagonal piece.

We shall show
\begin{align*}
\| P_{\le 1} H_1\|_{\infty}+ \| H_1\|_{\bs 0} \lesssim 1, \quad \| P_{j_1} H_2 \|_{\infty} \gg 1, \quad
\text{for some $j_1\gg 1$}.
\end{align*}

Step 4: the diagonal piece. One can rewrite $H_1$ as
\begin{align*}
2H_1(x)& = \partial_1 \Bigl( \sum_{j=100}^J  2^{-j^2} ( \partial_2
\phi_0
\partial_{12} \phi_0 -\partial_1 \phi_0 \partial_{22} \phi_0
)(2^{j^2} x) \Bigr) \notag \\
&\; + \partial_2 \Bigl( \sum_{j=100}^J 2^{-j^2} ( -\partial_2 \phi_0
\partial_{11} \phi_0 + \partial_1 \phi_0 \partial_{12} \phi_0
)(2^{j^2} x) \Bigr).
\end{align*}

Thanks to the above expression, it is easy to check that
\begin{align*}
\| P_{\le 1} H_1 \|_{\infty} \lesssim \sum_{j\ge 100} 2^{-j^2} \lesssim 1.
\end{align*}

Note that on the Fourier side, we have roughly speaking 
\begin{align*}
\widehat{H_1}(\xi) \sim \xi \sum_{j=100}^J c_j 1_{|\xi| \le \frac 53 2^{j^2}}.
\end{align*}

It is then easy to check that for each $l\ge 2$ (note that $P_l$ has frequency localised to $\frac 12 2^{l} \le |\xi| \le \frac 76 2^{l}$),
\begin{align*}
\| P_l H_1\|_{\infty} \lesssim  2^l \sum_{j\ge \sqrt {l-2}}
2^{-j^2} \lesssim 1.
\end{align*}

Step 5: The off-diagonal piece. Take $j_0=J/2$ and $j_1={j_0^2}$. It
is not difficult to check that
\begin{align*}
P_{j_1} H_2& = 2 (\partial_{12} \phi_0)(2^{j_0^2}x) \sum_{100\le j_2
<j_0} (\partial_{12} \phi_0)(2^{j_2^2} x) \notag \\
&\;\;-(\partial_{11} \phi_0)(2^{j_0^2} x)  \sum_{100\le j_2<j_0}
(\partial_{22} \phi_0)(2^{j_2^2} x)
\notag \\
&\;\;   - (\partial_{22} \phi_0)(2^{j_0^2} x)  \sum_{100\le j_2<j_0}
(\partial_{11} \phi_0)(2^{j_2^2} x).
\end{align*}

Thus
\begin{align*}
|(P_{j_1} H_2)(0)| \gtrsim j_0 | (\partial_{12} \phi_0)(0)^2 -
(\partial_{11} \phi_0)(0) (\partial_{22} \phi_0)(0)| \gg 1.
\end{align*}

This yields the desired result.
\end{proof}

\begin{proof}[Proof of Theorem \ref{prop1}, case $s=0$.]
We shall take $u= (-\partial_2 \phi, \partial_1 \phi, 0,\cdots, 0)$ where $\phi:\; \mathbb R^n \to \mathbb R$ will
be specified momentarily. 

Step 1. First we choose Schwartz $g:\, \mathbb R^n \to \mathbb R$ with
$\operatorname{supp}(\hat g) \subset\{ \xi:\, \frac 12 \le |\xi| \le 2 \}$ such that 
\begin{align*}
(P_{j_0} \Delta^{-1} \partial_{11} (g^2) \Bigr)(0 ) \ne 0, \qquad \text{for any $j_0 \in \mathbb Z$}.
\end{align*}
This can be easily achieved since $\Delta^{-1} \partial_{11}$ corresponds to the symbol $\xi_1^2/ |\xi|^2$ and one can just
take $\hat g $ to be non-negative.

Step 2.  Let $J\gg 1$ and define
\begin{align*}
\phi(x)= \sum_{j=1}^J \frac 1 {k_j} \cos (k_j x_2) g(x),
\end{align*}
where $k_j$ are dyadic numbers which are well separated and will be chosen sufficiently large. 
Note that $\| \phi \|_{\dot B^1_{\infty,\infty}}  + \| \phi\|_2  \lesssim 1$.

Step 3. Non-diagonal term does not matter.  Note that $\|u \|_{\dot B^0_{\infty,\infty}} \lesssim 1$. Recall
\begin{align*}
-p = \sum_{l,k} \Delta^{-1} \partial_l \partial_k( u_l u_k).
\end{align*}
Clearly
\begin{align*}
 &\| \sum_{l,k} \Delta^{-1} \partial_l \partial_k(  \sum_{j } P_{\le j-2} u_l P_j u_k ) \|_{\dot B^0_{\infty,\infty}} \notag \\
\lesssim\; & \sup_{j}   \sum_{l,k}  \|\Delta^{-1} \partial_l   ( \partial_k P_{\le j-2} u_l  P_j u_k) \|_{\infty} \lesssim 
\| u\|_{\dot B^0_{\infty,\infty}}^2\lesssim 1.
\end{align*}

Step 4. Inflation through the diagonal terms. In terms of $\phi$, we have
\begin{align*}
-p = \Delta^{-1} \partial_{11} ( (\partial_2 \phi)^2) + 
\Delta^{-1} \partial_{22} (  (\partial_1 \phi)^2) - 2 \Delta^{-1} \partial_{12} ( \partial_1 \phi \partial_2 \phi).
\end{align*}
Clearly
\begin{align*}
\partial_2 \phi = \sum_{j=1}^J
\bigl( -\sin(k_j x_2) g(x) + \frac 1 {k_j} \cos (k_j x_2)  \partial_2 g \bigr).
\end{align*}
Then
\begin{align*}
(\partial_2 \phi)^2 & = \text{``Non-diagonal terms"} \notag \\
& \quad + \sum_{j=1}^J \bigl( \sin^2(k_j x_2) g(x)^2 + \frac 1 {k_j^2} \cos^2(k_j x_2) (\partial_2 g)^2 \bigr) \notag \\
& =\text{``Non-diagonal terms"} \notag \\
& \quad + \frac 12 J g(x)^2 + \frac 12 ( \sum_{j=1}^J \frac 1 {k_j^2} ) (\partial_2 g)^2  \notag \\
& \quad + \frac 12 \sum_{j=1}^J
\bigl( -\cos(2k_j x_2) g(x)^2+ \frac 1 {k_j^2} \cos (2k_j x_2) (\partial_2 g)^2  \bigr).
\end{align*}
It follows that
\begin{align*}
\| -p -  \frac J2 \Delta^{-1} \partial_{11} (g^2) \|_{\dot B^0_{\infty,\infty}} \lesssim 1.
\end{align*}
Thus for any $j_0\in \mathbb Z$, by taking $J$ large, we obtain
\begin{align*} 
|(P_{j_0} p)(0)| \gtrsim \operatorname{const}\cdot J \gg 1.
\end{align*}
This clearly implies the desired inflation of $\| p\|_{\dot B^0_{\infty,\infty}}$ in high frequency.
\end{proof}

We now consider the case $u \in \dot C^{\frac 12}=\bs{\frac 12}$.

\begin{prop} \label{prop3}
For any $\epsilon>0$, there exists divergence free $u \in \mathcal
S(\mathbb R^n)$ with $\| u \|_{\bs{\frac 12}} +\|u\|_2\le 1$ such
that
\begin{align*}
\| \nabla p \|_{\infty} >\frac 1 {\epsilon}.
\end{align*}
\end{prop}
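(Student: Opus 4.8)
The plan is to mimic the $s=0$ construction from the previous proof, but to tune the frequencies so that the inflation lands in the \emph{gradient} of $p$ at a single high-frequency block rather than merely in the $\dot B^0_{\infty,\infty}$ norm of $p$. The starting point is again $u=(-\partial_2\phi,\partial_1\phi,0,\dots,0)$, which is automatically divergence-free, so that $-p=\Delta^{-1}\partial_{11}((\partial_2\phi)^2)+\Delta^{-1}\partial_{22}((\partial_1\phi)^2)-2\Delta^{-1}\partial_{12}(\partial_1\phi\,\partial_2\phi)$. First I would fix a Schwartz $g$ with $\widehat g$ nonnegative and supported in $\{\tfrac12\le|\xi|\le 2\}$ so that, for instance, $\bigl(P_{j_0}\partial_1\Delta^{-1}\partial_{11}(g^2)\bigr)(0)\neq 0$ for every $j_0\in\Z$; this is possible because $\partial_1\Delta^{-1}\partial_{11}$ has symbol $i\xi_1^3/|\xi|^2$, which does not vanish on the support of $\widehat{g^2}$ after restricting suitably, and we may simply arrange the relevant integral to be nonzero.

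Next I would set $\phi(x)=\sum_{j=1}^J k_j^{-1}\cos(k_j x_2)\,g(x)$ with $k_j$ a lacunary sequence of large dyadic integers. Exactly as in the $s=0$ argument one checks $\|\phi\|_{\dot B^1_{\infty,\infty}}+\|\phi\|_2\lesssim 1$, hence $\|u\|_{\dot B^{1/2}_{\infty,\infty}}+\|u\|_2\lesssim 1$ after we verify $\|\nabla\phi\|_{\dot B^{1/2}_{\infty,\infty}}\lesssim\|\phi\|_{\dot B^{3/2}_{\infty,\infty}}\lesssim\|\phi\|_{\dot B^1_{\infty,\infty}}$-type bounds — but actually we only need the weaker $\dot C^{1/2}$ bound, which follows from the explicit form of $\phi$ since each summand $k_j^{-1}\cos(k_j x_2)g(x)$ has $\dot C^{1/2}$ norm $\lesssim k_j^{-1/2}$, summable by lacunarity. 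Then expanding $(\partial_2\phi)^2$ as before, the cross terms between distinct $j$'s and the non-diagonal paraproduct pieces contribute $O(1)$ to $\|\nabla p\|_{\infty}$ (using the frequency-localised estimates and the boundedness of $u$ in $\dot B^0_{\infty,\infty}$, exactly as in Step 3 of the $s=0$ proof), while the diagonal contributions produce a main term: $(\partial_2\phi)^2$ contains $\frac12\sum_j \sin^2(k_j x_2)\cdot(\text{something like }g^2)$ whose low-frequency average is $\frac J2 g(x)^2$ plus oscillatory pieces at frequencies $2k_j$. The key point is that $\nabla$ of the main term is $\frac J2\,\nabla\Delta^{-1}\partial_{11}(g^2)$, whose value at $0$ is $\gtrsim J$ by the choice of $g$, whereas all error terms are bounded by an absolute constant. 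Taking $J$ large then forces $\|\nabla p\|_\infty\gtrsim J>1/\epsilon$.

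The main obstacle I anticipate is bookkeeping the error terms carefully enough to see that none of them grows with $J$. In particular: (i) the coefficient $\frac12\sum_{j=1}^J k_j^{-2}$ multiplying $(\partial_2 g)^2$ is $O(1)$ by lacunarity, so it is harmless; (ii) the oscillatory terms $\cos(2k_jx_2)g^2$ etc.\ each have $\|\nabla\Delta^{-1}\partial^2(\,\cdot\,)\|_\infty=O(1)$ uniformly in $j$ (the operator $\nabla\Delta^{-1}\partial^2$ maps $L^\infty\cap$(frequency-localised) to $L^\infty$ boundedly), but the \emph{sum} over $j=1,\dots,J$ is where one must be cautious — one needs the $2k_j$ to be well-separated dyadic frequencies so that these $J$ terms live in essentially disjoint frequency annuli and hence $\|\sum_j(\dots)\|_\infty\lesssim\sup_j\|(\dots)\|_\infty=O(1)$ rather than $O(J)$, which is precisely why the $k_j$ are chosen lacunary and large; (iii) similarly the non-diagonal cross terms $\sin(k_{j_1}x_2)\sin(k_{j_2}x_2)g^2$ with $j_1\neq j_2$ produce frequencies $k_{j_1}\pm k_{j_2}$, and lacunarity guarantees these are all distinct and bounded away from each other, so again the operator applied to their sum is $O(1)$. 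Once this frequency-disjointness is in place the estimate is routine, and the whole argument is essentially the $s=0$ construction with one extra derivative tracked and with the frequencies spread out enough to keep the errors from stacking.
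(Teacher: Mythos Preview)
There is a genuine gap: your $\phi = \sum_{j=1}^J k_j^{-1}\cos(k_j x_2)\,g(x)$ does \emph{not} give $\|u\|_{\dot B^{1/2}_{\infty,\infty}}\lesssim 1$. What is needed is $\|\nabla\phi\|_{\dot B^{1/2}_{\infty,\infty}}\sim\|\phi\|_{\dot B^{3/2}_{\infty,\infty}}$, and since $\partial_2\phi = \sum_j\bigl(-\sin(k_j x_2)g + k_j^{-1}\cos(k_j x_2)\partial_2 g\bigr)$, the Littlewood--Paley block of $\nabla\phi$ at frequency $\sim k_j$ has $L^\infty$ norm $\sim 1$, hence $\|u\|_{\dot B^{1/2}_{\infty,\infty}}\gtrsim k_j^{1/2}\to\infty$. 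The inequality ``$\|\phi\|_{\dot B^{3/2}_{\infty,\infty}}\lesssim\|\phi\|_{\dot B^1_{\infty,\infty}}$'' you invoke goes the wrong way at high frequencies, and your fallback argument bounds $\phi$ (not $u=\nabla^\perp\phi$) in $\dot C^{1/2}$, which is not what is required. (For the same reason $\|u\|_2^2\sim J$ also blows up.)

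Moreover, the one-directional modulation ansatz cannot be rescued by correcting the scaling. If you replace $k_j^{-1}$ by $k_j^{-3/2}$ so that $\|u\|_{\dot B^{1/2}_{\infty,\infty}}+\|u\|_2\lesssim 1$ actually holds, then the diagonal low-frequency part of $(\partial_2\phi)^2$ becomes $\tfrac12\bigl(\sum_j k_j^{-1}\bigr)g^2$, and the prefactor $\sum_j k_j^{-1}$ is $O(1)$ by lacunarity---the inflation disappears. The high-frequency pieces $k_j^{-1}\cos(2k_jx_2)g^2$ are no better: after $\nabla\Delta^{-1}\partial^2$ they each contribute $O(k_j^{-1})$ or smaller. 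The underlying reason is that $\cos(k_jx_2)g(x)$ is not compatible with the $\dot C^{1/2}$-critical scaling. The paper instead uses an \emph{isotropic} rescaling $\phi(x)=\sum_j N_j^{-3/2}\phi_0(N_j x)$, for which the diagonal contribution $\bigl(\nabla\Delta^{-1}\nabla\nabla\cdot(v_j\otimes v_j)\bigr)(0)$ with $v_j(x)=N_j^{-1/2}(\nabla^\perp\phi_0)(N_jx)$ is exactly scale-invariant (change variables $y=N_jx$), so each $j$ contributes the same nonzero constant and $|\nabla p(0)|\gtrsim J$; the off-diagonal interactions are then killed by the super-lacunary choice $N_{j-1}<N_j^{1/2}$, giving $\sum_j N_{j-1}^{1/2}N_j^{-1/2}=O(1)$.
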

\begin{proof}[Proof of Proposition \ref{prop3}] $$\;$$
Step 1:
 Take a scalar function $\phi_0 \in \mathcal S(\mathbb R^n)$
with $\operatorname{supp}(\widehat{\phi_0}) \subset\{ \, \xi: \;
\frac 23 \le |\xi| \le \frac 56\}$. Let $J\gg 1$ and
\begin{align*}
\phi(x) = \sum_{j=1000}^J N_j^{-\frac 32} \phi_0( N_j x),
\end{align*}
where $N_j= 2^{a_j}$, $a_j=3^{j^2}$. Note that by construction we
have
\begin{align} \label{prop3_e4}
N_{j-1} < N_j^{\frac 12}, \qquad\forall\, j.
\end{align}
Denote $\nabla^{\perp}=(-\partial_2, \partial_1,0,\cdots,0)$. Then
define
\begin{align*}
u= \nabla^{\perp} \phi= \sum_{j=1000}^J  \underbrace{ N_j^{-\frac
12} (\nabla^{\perp} \phi_0)(N_j x) }_{=:v_j}.
\end{align*}

  Easy
to check that
\begin{align*}
\| u \|_{\bs {\frac 12}} + \|u\|_2 \lesssim 1.
\end{align*}

On the other hand,
\begin{align}
\nabla p &= (-\Delta)^{-1} \nabla \nabla \cdot ( (u\cdot \nabla) u)
\notag \\
&= \sum_{j} (-\Delta)^{-1} \nabla \nabla \cdot ( (v_j \cdot \nabla)
v_j ) \label{prop3_e5a} \\
& \quad + \sum_j  (-\Delta)^{-1} \nabla \nabla \cdot (  (v_j \cdot
\nabla) v_{\le j-1} ) \label{prop3_e5b} \\
& \quad + \sum_j (-\Delta)^{-1} \nabla \nabla \cdot ( (v_{\le j-1}
\cdot \nabla )v_j), \label{prop3_e5c}
\end{align}
where we have denoted $v_{\le j-1} = \sum_{i\le j-1} v_i$.

By \eqref{prop3_e4}, frequency localization and Bernstein, we have
\begin{align*}
\| \eqref{prop3_e5b} \|_{\infty} \lesssim \sum_j N_j^{-\frac 12}
N_{j-1}^{\frac 12} \lesssim 1.
\end{align*}

For \eqref{prop3_e5c}, we shall transform it to a similar form as in
\eqref{prop3_e5b}. Note that for any two vector functions $f$, $g$
with $\nabla \cdot f=\nabla \cdot g=0$, we have
\begin{align*}
\nabla \cdot ( (f\cdot \nabla) g) & = \sum_{k,l} \partial_k f_l
\partial_l g_k \notag \\
& = \sum_{l} \partial_l ( \sum_k (\partial_k f_l) g_k).
\end{align*}
Thanks to the above transformation, it is clear that we have
\begin{align*}
\nabla \cdot ( (v_{\le j-1} \cdot \nabla )v_j ) = O ( \partial (
\partial v_{\le j-1} v_j)).
\end{align*}
Thus it can be estimated in the same way as in \eqref{prop3_e5b},
and we have
\begin{align*}
\| \eqref{prop3_e5c} \|_{\infty} \lesssim_j N_j^{-\frac 12} N_{j-1}^{\frac 12}
\lesssim 1.
\end{align*}

Step 2: From step 1, it is clear that the dominant contribution to
$\|\nabla p\|_{\infty}$ comes from the diagonal piece
\eqref{prop3_e5a}. Denote \eqref{prop3_e5a} as $H(x)$. Then by using
scaling, we have
\begin{align*}
H(0)  & = \sum_j  \Bigl( (-\Delta)^{-1} \nabla \nabla \cdot
((v_j\cdot \nabla) v_j ) \Bigr)(0) \notag \\
& =(J-999)  \Bigl( (-\Delta)^{-1} \nabla \nabla\cdot(
(\nabla^{\perp} \phi_0) \cdot \nabla)(\nabla^{\perp} \phi_0)
\Bigr)(0).
\end{align*}
Thus we only need to show the existence of $\phi_0$ such that
\begin{align*}
|\Bigl( (-\Delta)^{-1} \nabla \nabla\cdot( (\nabla^{\perp} \phi_0)
\cdot \nabla)(\nabla^{\perp} \phi_0) \Bigr)(0)| \ne 0.
\end{align*}
This is not difficult to do and we sketch the detail below.

First by using the proof of Theorem \ref{prop1} (see Step 2
therein), one can find $\tilde \phi_0 \in \mathcal S(\mathbb R^n)$
with
$\operatorname{supp}(\widehat{\tilde \phi_0}) \subset\{ \, \xi: \; \frac 23
\le |\xi| \le \frac 56\}$ 
such that
\begin{align*}
 & \Bigl(\nabla \cdot ( (\nabla^{\perp}\tilde \phi_0 \cdot
 \nabla)(\nabla^{\perp} \tilde \phi_0) ) \Bigr)(0)  \notag \\
 = &\; 2 \Bigl( ( \partial_{12} \tilde \phi_0)^2 -\partial_{11}
 \tilde \phi_0 \partial_{22} \tilde \phi_0  \Bigr)(0) \ne 0.
 \end{align*}
 Take a smooth bump function $g$ such that
 \begin{align*}
\int_{\mathbb R^n} \Bigl(\nabla \cdot ( (\nabla^{\perp}\tilde \phi_0
\cdot
 \nabla)(\nabla^{\perp} \tilde \phi_0) ) \Bigr)(x) g(x) dx \ne 0.
 \end{align*}
 Clearly then
 \begin{align*}
\int_{\mathbb R^n} \Bigl( (-\Delta)^{-1} \nabla \nabla \cdot (
(\nabla^{\perp}\tilde \phi_0 \cdot
 \nabla)(\nabla^{\perp} \tilde \phi_0) ) \Bigr)(x) \cdot \nabla  g(x) dx \ne 0.
 \end{align*}
From this, one sees that there exists $x_*$ such that
\begin{align*}
\Bigl( (-\Delta)^{-1} \nabla \nabla \cdot ( (\nabla^{\perp}\tilde
\phi_0 \cdot
 \nabla)(\nabla^{\perp} \tilde \phi_0) ) \Bigr)(x_*) \ne 0.
 \end{align*}
Define $\phi_0(x) = \tilde \phi_0(x+x_*)$. Since translation in
physical space is equivalent to modulation in frequency space (which
does not change frequency localisation), it is not difficult to
check that $\phi_0$ satisfies all the required properties.
\end{proof}

The next proposition requires a slightly different construction, in
the sense that the frequency supports are no longer disjoint. We
only need to work in the physical space.

\begin{prop} \label{prop4}
For any $\epsilon>0$, there exists divergence free $u \in \mathcal
S(\mathbb R^n)$ with $\| \nabla u \|_{\infty} +\|u\|_2\le 1$ such
that
\begin{align*}
\| \nabla p \|_{C^1} >\frac 1 {\epsilon}.
\end{align*}
\end{prop}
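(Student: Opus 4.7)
The approach parallels Proposition \ref{prop3}: take $u=\nabla^\perp\phi=(-\partial_2\phi,\partial_1\phi,0,\ldots,0)$ so $u$ is automatically divergence-free, and by \eqref{pphi_100} the pressure satisfies $-\Delta p=-2[(\partial_{11}\phi)(\partial_{22}\phi)-(\partial_{12}\phi)^2]$. Under this parameterization, the hypothesis $\|\nabla u\|_\infty\le 1$ becomes $\|\nabla^2\phi\|_\infty\le 1$, and the conclusion $\|\nabla p\|_{C^1}>1/\epsilon$ will follow if I exhibit $\phi$ with $\|\nabla^2\phi\|_\infty+\|\nabla\phi\|_2\lesssim 1$ for which the second-order Riesz transform of the Hessian determinant, namely $\partial_a\partial_bp$ for some $a,b\in\{1,2\}$, has arbitrarily large $L^\infty$-norm. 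Heuristically this is natural because $R_aR_b$ is not bounded on $L^\infty$.

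In contrast to Proposition \ref{prop3}, where frequency-disjoint dyadic pieces were used, here I would work entirely in physical space, allowing the frequency supports to overlap. Fix a Schwartz $\phi_0$ essentially concentrated near the origin, an anchor $y_0\in\R^n$ with $|y_0|$ large (say $|y_0|\ge 4$), and a super-exponentially growing sequence $N_j=2^{3^{j^2}}$. Set
\begin{align*}
\phi(x)=\sum_{j=1}^J N_j^{-2}\phi_0(N_jx-y_0),\qquad u=\nabla^\perp\phi.
\end{align*}
The bump $\phi_0(N_j\cdot-y_0)$ is essentially supported in a ball of radius $\sim 1/N_j$ centered at $y_0/N_j$; since distinct centers $y_0/N_j,y_0/N_{j'}$ ($j<j'$) are separated by $\ge\tfrac12|y_0|/N_j\gg 1/N_j+1/N_{j'}$, the Schwartz tails of the physical supports are essentially disjoint. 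The rescaling $\phi_0\mapsto N^{-2}\phi_0(N\cdot)$ preserves $\|\nabla^2\cdot\|_\infty$, so the disjoint-support estimate gives $\|\nabla^2\phi\|_\infty\lesssim\|\nabla^2\phi_0\|_\infty\lesssim 1$, while $L^2$ almost-orthogonality controls $\|\nabla\phi\|_2$ by the convergent sum $\sum_j N_j^{-1-n/2}$.

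Next, expand
\begin{align*}
(\partial_{11}\phi)(\partial_{22}\phi)-(\partial_{12}\phi)^2=\sum_{j=1}^J F(N_jx-y_0)+E(x),\qquad F:=(\partial_{11}\phi_0)(\partial_{22}\phi_0)-(\partial_{12}\phi_0)^2,
\end{align*}
where the off-diagonal remainder $E(x)$ is a sum of Schwartz-small cross terms. The super-exponential growth of $N_j$ keeps both $\|E\|_\infty$ and $\|R_aR_bE\|_\infty$ uniformly bounded (each cross term has frequencies below $O(N_{j'})$, so $R_aR_b$ acts boundedly on it). Using the exact scaling identity $(-\Delta)^{-1}[F(N\cdot-y_0)]=N^{-2}G(N\cdot-y_0)$ with $G:=(-\Delta)^{-1}F$, one finds $p(x)=-2\sum_j N_j^{-2}G(N_jx-y_0)+O(1)$, and the dilations cancel exactly in the second derivatives:
\begin{align*}
\partial_a\partial_b p(x)=-2\sum_{j=1}^J (R_aR_bF)(N_jx-y_0)+O(1).
\end{align*}
Evaluating at $x=0$, every one of the $J$ terms contributes the same value $(R_aR_bF)(-y_0)$, yielding $|\partial_a\partial_b p(0)|\ge 2J|(R_aR_bF)(-y_0)|-O(1)$, which exceeds $1/\epsilon$ once $J$ is large enough.

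The principal obstacle is the selection of $\phi_0$ so that $(R_aR_bF)(-y_0)\ne 0$ for some $a,b\in\{1,2\}$ at the chosen far-away anchor $y_0$. This refines Step 2 of Proposition \ref{prop1}: since $F\not\equiv 0$ generically and $\sum_a R_a^2F=-F$, at least one $R_aR_bF$ is not identically zero; if it vanished on $\{|y|\ge 4\}$ it would be compactly supported, forcing its Fourier transform $-\xi_a\xi_b|\xi|^{-2}\hat F(\xi)$ to be entire analytic, which fails for generic $\phi_0$ because the Riesz multiplier is not smooth at $\xi=0$. Finally, for $n\ge 3$ one multiplies $\phi$ by a fixed Schwartz bump $\eta(x_3,\ldots,x_n)$ with $\eta(0)\ne 0$, which preserves the computation at $x=0$ up to the bounded multiplicative factor $\eta(0)^2$ and hence maintains the linear-in-$J$ inflation.
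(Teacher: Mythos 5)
Your overall strategy (scaled diagonal bumps, exact scaling identity, linear-in-$J$ inflation of $\partial_a\partial_b p$ at a point) is the same as the paper's. However, there are two concrete gaps in the details.

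First, the cross-term error. The paper takes $\phi_0\in C_c^\infty$ supported on an annulus $\{c_1<|x|<c_2\}$, so the rescaled bumps $\phi_0(k_jx)$ have \emph{exactly} disjoint supports and the cross terms in $(u\cdot\nabla)u$ vanish identically. You instead take Schwartz $\phi_0$ translated by $y_0$, so the cross terms $E$ are merely ``Schwartz-small.'' But what you must control is $\|R_aR_bE\|_\infty$, and $R_aR_b$ is not bounded on $L^\infty$; your observation that each cross term has frequency support $\lesssim N_{j'}$ does not by itself give the needed bound. Estimating $\|R_aR_bE\|_\infty$ via $\|\widehat{E}\|_{L^1}$ or $\|E\|_{\dot B^0_{\infty,1}}$, the pointwise smallness $\sim|y_0|^{-M}$ of the cross terms is offset by the large frequency width, and with $y_0$ fixed it is not clear the error stays $O(1)$ uniformly in $J$. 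This is not a mere technicality: it is precisely the point that the paper's compact-support/disjoint-annuli geometry sidesteps. Second, your passage to $n\ge 3$ by multiplying $\phi$ by a bump $\eta(x_3,\ldots,x_n)$ is incorrect: the operator $R_aR_b=(-\Delta)^{-1}\partial_a\partial_b$ involves the full $n$-dimensional Laplacian and does not act coordinate-by-coordinate, so the contribution at $x=0$ does not simply pick up a factor $\eta(0)^2$. The paper avoids this by constructing $\phi_0$ directly on $\mathbb R^n$ (the oscillating bump $\cos(kx_2)\,c((x-x_*)/\eta)$ placed near $x_*=(1,0,\ldots,0)$) and verifying the non-vanishing of the single-bump value via an explicit integration-by-parts identity against the kernel $a=\partial_{11}\Delta^{-1}\delta_0$, whose derivatives at $x_*$ it computes for all $n\ge 2$. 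Your Paley--Wiener argument for $(R_aR_bF)(-y_0)\ne0$ (non-compact support since the Riesz symbol is not smooth at the origin) is in substance fine, but the paper's explicit construction is cleaner and, crucially, scales uniformly to all dimensions, which your tensor-product shortcut does not.
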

\begin{proof}[Proof of Proposition \ref{prop4}]
Take a smooth bump function $\phi_0 \in C_c^{\infty}(\mathbb R^n)$
with $$\operatorname{supp}(\phi_0) \subset\{ x:\; c_1
<|x|<c_2 \}$$ for some constants $0<c_1<c_2<\infty$. Define
\begin{align*}
\phi(x) = \sum_{j=1}^J k_j^{-2} \phi_0(k_j x),
\end{align*}
where  $J\gg 1$, and $k_j$ are dyadic numbers sufficiently large such that each summand has disjoint support. Denote $\nabla^{\perp}= (-\partial_2, \partial_1,
0,\cdots, 0)$ and
\begin{align*}
u = \nabla^{\perp} \phi = \sum_{j=1}^J \underbrace{k_j^{-1}
(\nabla^{\perp} \phi_0)(k_j x)}_{=:v_j}.
\end{align*}
Obviously $\|u\|_{C^1} + \|u\|_2 \lesssim 1$ (since the rescaled
copies of $\nabla^{\perp} \phi_0$ have disjoint supports). On the
other hand, by using scaling,
\begin{align*}
(\partial_{11} p)(0) & = \Bigl( (-\Delta)^{-1} \partial_{11} \nabla \cdot (
(u\cdot \nabla ) u ) \Bigr)(0) \notag \\
& = \sum_{j=1}^J \Bigl( (-\Delta)^{-1} \partial_{11} \nabla \cdot ( v_j
\cdot \nabla v_j ) \Bigr)(0) \notag \\
& = J \Bigl( (-\Delta)^{-1} \partial_{11} \nabla \cdot (
(\nabla^{\perp} \phi_0 \cdot \nabla) \nabla^{\perp} \phi_0)
\Bigr)(0).
\end{align*}

Thus we only need to choose $\phi_0$ compactly supported on an annulus such that
\begin{align*}
|\Bigl( (-\Delta)^{-1} \partial_{11} \nabla \cdot ( (\nabla^{\perp} \phi_0
\cdot \nabla) \nabla^{\perp} \phi_0) \Bigr)(0)|\ne 0.
\end{align*}
This is not difficult to achieve and we sketch the details below. 

Step 1. Simplification. We shall choose $\phi_0$ to have support away from the origin. This way one does not need to worry
about the singular kernels in the integration by part argument below.
Denote $v=\nabla^{\perp} \phi_0$. Then
\begin{align*}
  & \bigl( \Delta^{-1} \partial_{11} \nabla \cdot ( v\cdot \nabla v) \bigr)(0) \notag \\
  =&\; \sum_{1\le j,k\le 2} \int v_j v_k \partial_j \partial_k (\; \underbrace{\partial_{11} \Delta^{-1} \delta_0}_{=:a(x)}\;) dx \notag \\
  =&\; \int ( v_1^2  \partial_{11} a + v_2^2 \partial_{22} a -2 v_1 v_2 \partial_{12} a ) dx.
  \end{align*}

Step 2. Case dimension $n=2$. Note that 
\begin{align*}
a(x)= C_1 \cdot ( (x_1^2+x_2^2)^{-1} - 2 
(x_1^2+x_2^2)^{-2} x_1^2),
\end{align*}
where $C_1>0$ is an absolute constant.  Denote $\tilde a(x) = a(x)/C_1$. Easy to verify that
\begin{align*}
(\partial_{11} \tilde a)(1,0)=-6, \quad (\partial_{22} \tilde a)(1,0)=6, \quad (\partial_{12} \tilde a)(1,0)=0.
\end{align*}
Now define $x_{\ast}=(1,0)$ and let
\begin{align*}
\phi_0(x) = \cos (kx_2) c(\frac {x-x_{\ast}} {\eta}),
\end{align*}
where $c$ is a bump function localised to the unit ball, $\eta>0$ will be taken sufficiently small (so that $|(\partial_{11} 
\tilde a)(x)| \sim 1$ for $|x-x_{\ast}|<\eta$)
and $k$ will be taken sufficiently large. It is then easy to check (recall $v_1=-\partial_2 \phi_0$)
\begin{align*}
&\left|\int ( v_1^2  \partial_{11} \tilde a + v_2^2 \partial_{22} \tilde a -2 v_1 v_2 \partial_{12} \tilde a ) dx\right| \notag \\
\ge &\; \operatorname{const} \cdot k^2 - O(k) \gtrsim 1,
  \end{align*}
  if $k$ is sufficiently large. This settles the 2D case.

Step 3. Case dimension $n\ge 3$.  Denote $x_{\ast}=(1,0,\cdots,0)$. Easy to check that
\begin{align*}
\partial_{11} a(x) \bigr|_{x=x_{\ast}} = - \operatorname{const} \cdot \partial_{x_1}^4 ( x_1^{-(n-2) } ) \Bigr|_{x_1=1} \ne 0.
\end{align*}
Again  let
\begin{align*}
\phi_0(x) = \cos (kx_2) c(\frac {x-x_{\ast}} {\eta})
\end{align*}
will yield
\begin{align*}
&\left|\int ( v_1^2  \partial_{11} \tilde a + v_2^2 \partial_{22} \tilde a -2 v_1 v_2 \partial_{12} \tilde a ) dx\right| \notag \\
\ge &\; \operatorname{const} \cdot k^2 - O(k) \gtrsim 1.
  \end{align*}
  This settles the case for $n\ge 3$. 
\end{proof}

\begin{proof}[Proof of Proposition \ref{prop5}]
Define $\nabla^{\perp}=(-\partial_2,\partial_1, 0,\cdots,0)$. Let
$\phi \in C_c^{\infty}(\mathbb R^n)$ be such that
\begin{align*}
&\int_{\mathbb R^n} (\partial_1 \phi)^2 dx \ne \int_{\mathbb R^n}
(\partial_2 \phi)^2 dx, \\
&\int_{\mathbb R^n} \partial_1 \phi \partial_2 \phi dx =0.
\end{align*}
For example, one can take $\phi(x) =b(x) \cos (kx_1)$, where $b \in C_c^{\infty}(\mathbb R^n)$ is radial, and $k$ is sufficiently large.

Define $u= \nabla^{\perp} \phi$.  By using a computation similar to that in \eqref{pphi_100},
we have
\begin{align*}
p = 2(-\Delta)^{-1} \Bigl( \underbrace{(\partial_{12} \phi)^2 - \partial_{11} \phi
\partial_{22} \phi}_{:=g(x)} \Bigr).
\end{align*}

Easy to check that $\hat g(0)=0$ and $(\partial_{\xi} \hat g)(0)=0$.
The latter is due to the fact that
\begin{align*}
\int_{\mathbb R^n} ( (\partial_{12}\phi)^2 - \partial_{11} \phi \partial_{22} \phi ) x_j dx =0, \quad \forall\, 1\le j\le n.
\end{align*}
To derive this, one can use the simple identity (below $\beta=\beta(x)$ denotes a smooth weight function)
\begin{align*}
\int \partial_{11} \phi  \partial_{22} \phi \beta(x) dx &=
\int (\partial_{12} \phi)^2 \beta\notag \\
& \quad\quad+\frac 12\int ( - (\partial_2 \phi)^2 \partial_{11} \beta - (\partial_1 \phi)^2 \partial_{22} \beta
+ 2\partial_1 \phi \partial_2 \phi \partial_{12} \beta) dx.
\end{align*}

On the other hand, by using the identity above, it is not difficult to check that
\begin{align*}
&\int_{\mathbb R^n} g(x) x_1^2 dx =  \int_{\mathbb R^n} (\partial_2\phi)^2
dx,\\
& \int_{\mathbb R^n} g(x) x_2^2 dx =  \int_{\mathbb R^n} (\partial_1\phi)^2
dx, \\
& \int_{\mathbb R^n} g(x) x_1 x_2 dx =-\int_{\mathbb R^n} \partial_1 \phi \partial_2 \phi dx =0.
\end{align*}

Thus near $\xi=0$,
\begin{align*}
\hat g(\xi)= c_1 \xi_1^2 +c_2 \xi_2^2 + O(|\xi|^4),
\end{align*}
where $c_1\ne c_2$. Clearly then $\hat g(\xi) / |\xi|^2$ is not continuous at $\xi=0$.
This immediately implies that $p \notin L^1(\mathbb R^n)$.

\end{proof}

\end{document}